\newtheorem{theorem}{Theorem}[section]
\newtheorem{proposition}[theorem]{Proposition}
\newtheorem{lemma}[theorem]{Lemma}
\newtheorem{corollary}[theorem]{Corollary}
\newtheorem{remark}[theorem]{Remark}
\newcommand{\longthmtitle}[1]{\mbox{}\textit{(#1).}}
\newcommand{\overbar}[1]{\mkern 1.5mu\overline{\mkern-1.5mu#1\mkern-1.5mu}\mkern 1.5mu}
\newcommand{\real}{\ensuremath{\mathbb{R}}}
\newcommand{\realpos}{\ensuremath{\mathbb{R}_{>0}}}
\newcommand{\realnonneg}{\ensuremath{\mathbb{R}_{\ge 0}}}
\newcommand{\integers}{{\mathbb{Z}}}
\newcommand{\integerspos}{{\mathbb{N}}}
\newcommand{\integersnonneg}{{\mathbb{N}}_0}
\newcommand{\intersect}{\ensuremath{\operatorname{\cap}}}
\newcommand{\Enorm}[1]{\|#1\|}
\newcommand{\intrangecc}[2]{[#1, #2]_\integers}
\newcommand{\intrangeoo}[2]{(#1, #2)_\integers}
\newcommand{\intrangeoc}[2]{(#1, #2]_\integers}
\newcommand{\indfun}[1]{ \mathbf{1}_{#1} }
\newcommand{\Bc}{\mathcal{B}}
\newcommand{\Dc}{\mathcal{D}}
\newcommand{\Ec}{\mathcal{E}}
\newcommand{\Fc}{\mathcal{F}}
\newcommand{\Gc}{\mathcal{G}}
\newcommand{\Pc}{\mathcal{P}}
\newcommand{\Tc}{\mathcal{T}}
\newcommand{\drm}{\mathrm{d}}
\newcommand{\expect}[2][]{\mathbb{E}_{#1}\left[ #2 \right]}
\newcommand{\condexpect}[3][]{\mathbb{E}_{#1}\left[ #2 \ \vert \ #3 \right]}
\newcommand{\abar}{\bar{a}}
\newcommand{\Abar}{\bar{A}}
\newcommand{\Mbar}{\overbar{M}}
\newcommand{\xhat}{\hat{x}}
\newcommand{\pf}{h}
\newcommand{\pfb}{\bar{h}}
\newcommand{\gD}[1]{g_D(#1)}
\newcommand{\gDD}{g_D}
\newcommand{\LRT}[1]{R_{#1}}    
\newcommand{\ETP}{\Tc_{\Ec}}
\newcommand{\ETPvec}{\bar{\Tc}_{\Ec}}
\newcommand{\st}{s_{*}}
\newcommand{\sst}{s_{**}}
\newcommand{\Ft}{F_{*}}
\newcommand{\Fst}{F_{**}}
\newcommand{\G}[2]{G_{#1}^{#2}}
\newcommand{\Gp}[2]{J_{#1}^{#2}}
\newcommand{\qp}[2]{w_{#1}^{#2}}
\newcommand{\norm}[1]{\| #1 \|}
\newcommand{\tr}[1]{\text{tr}(#1)}
\newcommand{\oprocendsymbol}{\hbox{$\bullet$}}
\newcommand{\oprocend}{\relax\ifmmode\else\unskip\hfill\fi\oprocendsymbol}
\newenvironment{myquote}[1]%
  {\list{}{\leftmargin=#1\rightmargin=#1}\item[]}%
  {\endlist}
\newcommand{\myclearpage}{\clearpage}
\renewcommand{\myclearpage}{}
\begin{document}

\title{Event-triggered second-moment stabilization of linear systems
  under packet drops\thanks{A preliminary version of this paper
    appeared at the Allerton Conference on Communications, Control and
    Computing as~\cite{PT-MF-JC:16-allerton}.}}

\author{Pavankumar Tallapragada \qquad Massimo Franceschetti \qquad
  Jorge Cort{\'e}s \thanks{P. Tallapragada is with the Department of
    Electrical Engineering, Indian Institute of Science,
    M. Franceschetti is with the Department of Electrical and Computer
    Engineering, University of California, San Diego and J.
    Cort{\'e}s is with the Department of Mechanical and Aerospace
    Engineering University of California, San Diego {\tt\small
      pavant@ee.iisc.ernet.in, \{massimo,cortes\}@ucsd.edu}}%
}

\maketitle

\begin{abstract}
  This paper deals with the stabilization of linear systems with
  process noise under packet drops between the sensor and the
  controller. Our aim is to ensure exponential convergence of the
  second moment of the plant state to a given bound in finite
  time. Motivated by considerations about the efficient use of the
  available resources, we adopt an event-triggering approach to design
  the transmission policy. In our design, the sensor's decision to
  transmit or not the state to the controller is based on an online
  evaluation of the future satisfaction of the control objective. The
  resulting event-triggering policy is hence specifically tailored to
  the control objective. We formally establish that the proposed
  event-triggering policy meets the desired objective and quantify its
  efficiency by providing an upper bound on the fraction of expected
  number of transmissions in an infinite time interval.  Simulations
  for scalar and vector systems illustrate the results.
\end{abstract}

\section{Introduction}

One of the fundamental abstractions of cyber-physical systems is the
idea of networked control systems, the main characteristic feature of
which is that feedback signals are communicated over a communication
channel or network. As a result, control must be performed under
communication constraints such as quantization, unreliability, and
latency. These limitations make it necessary to design control systems
that tune the use of the available resources to the desired level of
task performance. With this goal in mind, this paper explores the
design of event-triggered transmission policies for second-moment
stabilization of linear plants under packet~drops.

\subsubsection*{Literature review}

The increasing deployment of cyberphysical systems has brought to the
forefront the need for systematic design methodologies that integrate
control, communication, and computation instead of independently
designing these components and integrating them in an adhoc manner,
see e.g.~\cite{KDK-PRK:12,JS-XK-GK-NK-PA-VG-BG-JB-SW:12}. Among this
growing body of literature, the contents of this paper are
particularly related to works that deal with feedback control under
communication constraints,
see~\cite{ST-SM:04,GNN-FF-SZ-RJE:07,SY-TB:13} and references therein,
and specifically packet drops or erasure channels, see
e.g.,~\cite{LS-BS-MF-KP-SSS:06,VG-NCM:10,VG:14-sv}.  In the past
decade, opportunistic state-triggered control
methods~\cite{PT:07,XW-MDL:11,WPMHH-KHJ-PT:12}, have gained popularity
for designing transmission policies for networked control systems that
seek to efficiently use the communication resources. The main idea
behind this approach is to design state-dependent triggering criteria
that opportunistically specify when certain actions (updating the
actuation signal, sampling data, or communicating information) must be
executed. More generally, the triggering criteria may also depend on
the desired control objective, and the available information about the
state, communication channel, and other constraints. In the context of
the communication service, the emphasis has largely been on minimizing
the number of transmissions rather than the quantized data, often
ignoring the limits imposed by channel characteristics, with some
notable exceptions,
see~\cite{DL-JL:10,LL-XW-MDL:12a,YS-XW:14,EG-PJA:13,JP-JPH-DL:14} and
references therein. In our previous
work~\cite{PT-JC:16-tac,PT-MF-JC:15-auto}, we have also sought to
address these limitations for deterministic models of the behavior of
the communication channel. Although today there exists a large body of
work on opportunistic state-triggered control, the application of
these ideas in the stochastic setting is still relatively
limited. This is despite the fact that one of the first works on
event-triggered control~\cite{KJA-BMB:02} was in this setting.
Event-triggering methods in the stochastic setting have almost
exclusively been utilized in finite or infinite horizon optimal
control problems with fixed threshold-based triggering.  The
works~\cite{TH-EJ-AC:08,XM-TC:12,BD-VG-DEQ-MJ:15} also incorporate
transmission costs in the cost function and analyze the optimal
transmission costs. On the other hand, \cite{MR-KHJ:09,RB-FA:12}
analyze the transmission rates. In addition,
\cite{BD-VG-DEQ-MJ:15,MR-KHJ:09,RB-FA:12,MHM-DT-AM-SH:14} also
consider packet drops. The work~\cite{AM-SH:13} shows optimality of
certainty equivalence in event-triggered control for certain finite
horizon problems. In contrast to starting with an event-triggered
control policy, the work~\cite{OCI-TB:06} formulates an optimal
control problem over a finite horizon with the constraint that at most
a smaller number of transmissions may occur, and the optimal control
policy turns out to be event-triggered.  Finally, we should remark
that stochastic stability, in the sense of moment stability, with
event-triggered control has received much less attention. The
work~\cite{RPA-DM-DVD:15} follows~\cite{PT:07} to study self-triggered
sampling for second-moment stability of state-feedback controlled
stochastic differential equations. The work~\cite{DEQ-VG-WM-SY:14}
proposes a fixed threshold-based event-triggered anytime control
policy under packet drops. It assumes that the controller has
knowledge of the transmission times, including when a packet is
dropped, and the policy guarantees second-moment stability with
exponential convergence to a finite bound
asymptotically. Both~\cite{RPA-DM-DVD:15,DEQ-VG-WM-SY:14} are
applicable to multi-dimensional nonlinear systems.

\subsubsection*{Statement of contributions}

We formulate the problem of second-moment stabilization of scalar
linear systems subject to process noise and independent identically
distributed packet drops in the communication channel. Our goal is to
design a policy to prescribe transmissions from the sensor to the
controller that ensures exponential convergence in finite time of the
second moment of the plant state to an ultimate bound. Our first
contribution is the design of an event-triggered transmission policy
in which the decision to transmit or not is determined by a
state-based criterion that uses the available information. The
synthesis of our policy is based on a two-step design
procedure. First, we consider a nominal quasi-time-triggered policy
where no transmission occurs for a given number of timesteps, and then
transmissions occur on every time step thereafter.  Second, we define
the event-trigger policy by evaluating the expectation of the system
performance at the next reception time given the current information
under the nominal policy, and prescribe a transmission if this
expectation fails to meet the objective.  This approach results in a
transmission policy more complex than a threshold-based triggering,
but since it is driven by the control objective results in fewer
transmissions. Our second contribution is the rigorous
characterization of the system evolution, first under the proposed
family of nominal transmission policies and second, building on this
analysis, under the proposed event-triggered transmission policy.
This helps us identify sufficient conditions on the ultimate bound,
the system parameters, and the communication channel that guarantee
that the event-triggered policy indeed meets the control objective.
Our third contribution compares the efficiency of the proposed design
with respect to time-triggered policies and provides an upper bound on
the fraction of the expected number of transmissions over an infinite
time horizon. Our fourth and last contribution is the extension of our
exponential convergence guarantees to the vector case and a discussion
of the design and analysis challenges in extending the
characterization of efficiency.  Various simulations illustrate our
results. We omit the proofs that appeared in the conference
version~\cite{PT-MF-JC:16-allerton} of this work and instead refer the
interested reader there.



\subsubsection*{Notation}

We let $\real$, $\realnonneg$, $\integers$, $\integerspos$,
$\integersnonneg$ denote the set of real, non-negative real numbers,
integers, positive integers and non-negative integers respectively. We
use the notation $\intrangecc{a}{b}$ and $\intrangeoo{a}{b}$ to denote
$[a,b] \intersect \integers$ and $(a,b) \intersect \integers$,
respectively. We use similar notation for half-open/half-closed
intervals. For a matrix $A$, we let $\tr{A}$ denote the trace of the
matrix. Given a set $A$, we denote its indicator function by
$\indfun{A}$, i.e., $\indfun{A}(x) = 0$ if $x \notin A$ and
$\indfun{A}(x) = 1$ if $x \in A$. We use `w.p.' as a shorthand for
`with probability'. We denote the expectation given a transmission
policy $\Pc$ as $\expect[\Pc]{.}$.  Let $(\Omega, \Fc, P)$ be a
probability space and $\Gc_1 \subset \Gc_2 \subset \Fc$ be two
sub-sigma fields of $\Fc$. Then, the \emph{tower property} of
conditional expectation is
\begin{equation*}
  \condexpect{ \condexpect{ X }{ \Gc_2 } }{ \Gc_1 } = \condexpect{ X
  }{ \Gc_1 } = \condexpect{ \condexpect{ X }{ \Gc_1 } }{ \Gc_2 } .
\end{equation*}

\myclearpage
\section{Problem statement}\label{sec:prob-stat}

This section describes the model for the plant dynamics and the
assumptions on the sensor, actuator, and the communication channel
between them. Given this setup, we then specify the objective for the
control design.

\subsubsection*{Plant, sensor, and actuator}

Consider a scalar discrete-time linear time-invariant system evolving
according to
\begin{align}\label{eqn:plant_dyn}
  x_{k+1} = a x_k + u_k + v_k,
\end{align}
for $k \in \integersnonneg$. Here $x \in \real$ denotes the state of
the plant, $a \in \real$ defines the system internal dynamics, $u \in
\real$ is the control input, and $v$ is a zero-mean independent and
identically distributed process noise with covariance $M>0$,
uncorrelated with the system state.

A sensor measures the plant state $x_k$ at time $k$. The sensor, being
not co-located with the controller, communicates with it over an
unreliable communication channel. The sensor maintains an estimate of
the plant state given the `history' (defined precisely below) up to
time $k-1$. During the time between two \emph{successful}
communications, the controller itself estimates the plant state. We
let $\xhat_k^+$ be the controller's estimate of the plant state $x_k$
given the past history of transmissions and receptions including those
at time $k$, if any. This results in a control action given by
$u_k = L \xhat_k^+$. We assume that the sensor can independently
compute $\xhat_k^+$ at the next time step $k+1$ for each
$k \in \integersnonneg$ (this is possible with acknowledgments from
the controller to the sensor on successful reception times). We denote
the \emph{sensor estimation error} and \emph{controller estimation
  error} as $e_k \triangleq x_k - \hat{x}_k$ and
$e_k^+ \triangleq x_k - \hat{x}_k^+$, which are known to the sensor at
all times, but not to the controller.

\subsubsection*{Communication channel}

The sensor can transmit the plant state to the controller with
infinite precision and instantaneously at time steps of its choosing,
but packets might be lost.  We define the \emph{transmission process}
$\{t_k\}_{k \in \integersnonneg}$ as
\begin{equation}
  \label{eq:transmit-proc}
  t_k \triangleq
  \begin{cases}
    1, \quad \text{if a packet is transmitted at } k,
    \\
    0, \quad \text{if no packet is transmitted at } k .
  \end{cases}
\end{equation}
The way in which this process occurs is determined by a transmission
policy $\Tc$, to be specified by the designer.  Similarly, we define a
\emph{reception process} $\{r_k\}_{k \in \integersnonneg}$, with $r_k$
being $1$ or $0$ depending on whether a packet is received or not
at~$k$.  The transmission and reception processes may differ due to
Bernoulli-distributed packet drops.  Formally, if $p \in (0,1]$
denotes the probability of successful transmission, the reception
process is 
\begin{equation}
  \label{eq:recep-proc}
  r_k \triangleq
  \begin{cases}
    1, \quad \text{w.p. } p \text{ if } t_k = 1 ,
    \\
    0, \quad \text{if } t_k = 0 \text{ or w.p. } (1-p) \text{ if } t_k
    = 1 .
  \end{cases}
\end{equation}
We denote the \emph{latest reception time before $k$} and \emph{latest
  reception time up to $k$} by $\LRT{k}$ and $\LRT{k}^+$,
resp. Formally,
  \begin{subequations}\label{eq:Sjk-Spjk}
    \begin{align}
      \LRT{k} & \triangleq \max \{ i < k : r_i = 1 \} ,
      \\
      \LRT{k}^+ & \triangleq \max \{ i \leq k : r_i = 1 \} .
    \end{align}  
  \end{subequations}
  Both times coincide if $r_k=0$. The need for separate notions would
  become clearer later: the notion of $\LRT{k}$ plays a role in the
  design of the triggering rule, while the notion of $\LRT{k}^+$ is
  useful in the analysis of the system evolution.  We denote the
  sequence of all (successful) reception times as
  $\{S_j\}_{j \in \integersnonneg}$, i.e.,
\begin{equation}
  \label{eq:Sj}
  S_0 = 0, \quad S_{j+1} \triangleq \min \{ k > S_j : r_k = 1 \} ,
\end{equation}
where we have assumed, without loss of generality, that $S_0 = 0$ and
hence also $r_0 = 1$. Thus, $S_j$ is the $j^{\text{th}}$ reception
time.

\subsubsection*{System evolution}

Given the sensor-controller communication model specified above, we
describe the system evolution and the controller's estimate,
respectively, as
\begin{subequations}\label{eq:sys-evolve}
  \begin{align}
    x_{k+1} &= a x_k + L \xhat_k^+ + v_k = \abar x_k - L e_k^+ +
    v_k, \label{eq:x-evolve}
    \\
    \xhat_{k+1} &= \abar \xhat_k^+ ,
  \end{align}
  where $\abar = a+L$ and
  \begin{equation}
    \label{eq:xhat_p}
    \xhat_k^+ \triangleq
    \begin{cases}
      \xhat_k \quad &\text{if } r_k = 0,
      \\
      x_k, \quad &\text{if } r_k = 1 .
    \end{cases}
  \end{equation}
\end{subequations}
The use of $\xhat^+$ and $e^+$ is motivated by our goal of designing a
state-triggered transmission policy: the decision to transmit at time
$k$ is made by the sensor based on $x_k$ and $\xhat_k$ (or
equivalently $e_k$), while the plant state at $k+1$ depends on whether
a packet is received or not at~$k$, which is captured by $\xhat^+$
and~$e_k^+$.  We denote by
$I_k \triangleq (k, x_k, e_k, \LRT{k}, x_{\LRT{k}})$ the information
available to the sensor at time $k$, based on which it decides whether
to transmit or not. We also let
$I_k^+ \triangleq (k, x_k, e_k^+, \LRT{k}^+, x_{\LRT{k}^+})$ be the
information available at the controller at time $k$, which can also be
independently computed by the sensor at the end of time step $k$ upon
receiving or not receiving an acknowledgment. Note that $I_k^+$
differs from $I_k$ only if $k$ is a reception time, i.e., $r_k = 1$
(equivalently, only if $k = S_j$ for some $j$). The closed-loop system
is not fully defined until a transmission policy~$\Tc$, determining
the transmission process~\eqref{eq:transmit-proc}, is specified. This
specification is guided by the control objective detailed next.

\subsubsection*{Control objective}

Our objective is to ensure the stability of the plant dynamics with a
guaranteed level of performance.  We rely on stochastic stability
because the presence of random disturbances and the unreliable
communication channel make the plant evolution stochastic. Formally,
we seek to synthesize a transmission policy $\Tc$ ensuring
\begin{equation}\label{eq:overall-objective}
  \condexpect[\Tc]{x_k^2}{I_0^+} \leq \max \{ c^{2k} x_0^2 , B \},
  \quad \forall k \in \integerspos ,
\end{equation}
which corresponds to the second moment of the plant state, conditioned
on the initial information, converging at an exponential rate
$c \in (0,1)$ to its ultimate bound~$B\ge 0$.

A possible, purely time-triggered transmission policy to
guarantee~\eqref{eq:overall-objective} would be to transmit at every
time instant.  Such policy would presumably lead to an inefficient use
of the communication channel, since it is oblivious to the plant state
in making decisions about transmissions.  Instead, we seek to design
an event-triggered transmission policy $\Tc$, i.e., an online policy
in which the decision to transmit or not is determined by a
state-based criterion that uses the available information.

\subsubsection*{Standing assumptions}

We assume the drift constant $a$ is such that $|a| > 1$, so that
control is necessary. We also assume $\abar^2 < c^2 < 1$, so that the
performance function is always non-positive under zero noise and no
packet drops. Finally, we assume $a^2 (1-p) < 1$.  This latter
condition is necessary for second-moment stabilizability under
Bernoulli packet drops, see e.g.~\cite{PM-MF-SD-GNN:09, MF-PM:14}. In
our discussion, the condition is necessary for the convergence of
certain infinite series (we come back to this point in
Remark~\ref{rem:necc-condition-standing}).  For the reader's
reference, we present in the appendix a list of the symbols most
frequently used along the paper.

\myclearpage
\section{Event-triggered transmission policy}\label{sec:design}

This section provides an alternative control objective and shows that
its satisfaction implies the original one defined in
Section~\ref{sec:prob-stat} is also satisfied. This reformulated
objective serves then as the basis for our design of the
event-triggered transmission policy.

\subsection{Online control
  objective}\label{sec:alternative-control-objective}

The control objective stated in~\eqref{eq:overall-objective}
prescribes, given the initial condition, a property on the whole
system trajectory in a priori fashion. This `open-loop' nature makes
it challenging to address the design of the transmission policy. To
tackle this, we describe here an alternative control objective which
prescribes a property on the system trajectory in an online fashion,
making it more handleable for design, and whose satisfaction implies
the original objective is also met. To this end, consider the
\emph{performance function},
\begin{equation}
  \label{eq:pf}
  \pf_k = x_k^2 - \max \{ c^{2(k-\LRT{k})} x_{\LRT{k}}^2 , B \} ,
\end{equation}
which has the interpretation of capturing the desired performance at
time $k$ with respect to the state at the latest reception time
before~$k$. Given this interpretation, consider the alternative
control objective that consists of ensuring that 
\begin{align}\label{eq:objective}
  \condexpect[\Tc]{h_k}{I_{\LRT{k}}^+} \leq 0 , \quad \forall k \in
  \integerspos .
\end{align}
The next result shows that the satisfaction of~\eqref{eq:objective}
ensures that the original control
objective~\eqref{eq:overall-objective} is also met. The proof relies
on the use of induction and can be found
in~\cite{PT-MF-JC:16-allerton}.

\begin{lemma}\longthmtitle{The online control objective is stronger
    than the original control
    objective~\cite{PT-MF-JC:16-allerton}} \label{lem:objectives}
  If a transmission policy $\Tc$ ensures the online
  objective~\eqref{eq:objective}, then it also guarantees the control
  objective~\eqref{eq:overall-objective}.
\end{lemma}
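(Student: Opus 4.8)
The plan is to prove the bound $\condexpect[\Tc]{x_k^2}{I_0^+}\le\max\{c^{2k}x_0^2,B\}$ by induction, using the online objective~\eqref{eq:objective} to ``restart'' the estimate at each successful reception and chaining the resulting one-reception bounds. The structural fact that drives the argument is that the target envelope $b_k:=\max\{c^{2k}x_0^2,B\}$ is self-similar: for every $0\le s\le k$ one has $b_k=\max\{c^{2(k-s)}b_s,B\}$ (because $c^{2(k-s)}B\le B$), and the map $t\mapsto\max\{c^{2(k-s)}t,B\}$ is nondecreasing. Hence if I can transfer a bound from a reception time to the present with exactly this functional form, the successive bounds compose into $b_k$.

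I would first record the base case: since $S_0=0$ is a reception, $\LRT{1}=0$ deterministically, so the online objective at $k=1$ reads $\condexpect[\Tc]{x_1^2}{I_0^+}\le\max\{c^2x_0^2,B\}=b_1$, with no averaging over the reception pattern. For the inductive step I would partition on the latest reception time, $\condexpect[\Tc]{x_k^2}{I_0^+}=\sum_{s=0}^{k-1}\condexpect[\Tc]{\indfun{\{\LRT{k}=s\}}x_k^2}{I_0^+}$; on $\{\LRT{k}=s\}$ the state $x_s$ is received, so $e_s^+=0$ and $x_s$ is a sufficient statistic for the future, and this Markov property lets me invoke~\eqref{eq:objective} conditionally to get $\condexpect[\Tc]{x_k^2}{I_0^+}\le\condexpect[\Tc]{\max\{c^{2(k-\LRT{k})}x_{\LRT{k}}^2,B\}}{I_0^+}$. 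Equivalently, I would run the induction on the horizon: letting $C(d)$ be the assertion $\condexpect[\Tc]{x_{m+d}^2}{I_m^+}\le\max\{c^{2d}x_m^2,B\}$ at any reception time $m$, I would prove $C(d)$ by strong induction on $d$ by splitting on the first reception after $m$, using~\eqref{eq:objective} when no reception occurs strictly between $m$ and $m+d$, and the tower property together with $C(d-i)$ when the next reception is at $m+i$. Applying $C(k)$ at $m=0$ then gives the claim.

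The step closing the induction is where I expect the real difficulty. After applying $C(d-i)$ I am left needing $\condexpect[\Tc]{\max\{c^{2(d-i)}x_{m+i}^2,B\}}{I_m^+}\le\max\{c^{2(d-i)}\condexpect[\Tc]{x_{m+i}^2}{I_m^+},B\}$, and this is exactly the \emph{reverse} of conditional Jensen for the convex map $t\mapsto\max\{t,B\}$, hence false in general: writing $\max\{y,B\}=B+(y-B)^+$ one sees $\expect{(y-B)^+}\ge(\expect{y}-B)^+$, the wrong direction. Thus when $B=0$ the envelope is purely multiplicative and the chaining is immediate, but the ultimate-bound floor $B>0$ makes the outer maximum fail to commute with the conditional expectation; it is also what makes the conditioning $\sigma$-fields $\sigma(x_0)$ and $\sigma(\LRT{k},x_{\LRT{k}})$ fail to be nested, so the ``tower'' reduction above must be justified through the Markov/total-probability route rather than taken for granted.

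To get around this I would not linearize the maximum but keep its argument measurable at the instant it is evaluated and separate the two regimes of $b_k$. Concretely, I would introduce the reception epoch at which the geometric term $c^{2(\cdot)}x^2$ first reaches the floor $B$, and argue separately on the \emph{decaying} phase, where the maximum equals its geometric argument pathwise and the clean multiplicative chaining of the $B=0$ case applies, and on the \emph{at-floor} phase, where~\eqref{eq:objective} forces the conditional second moment to remain at $B$. Controlling the random switching time between these phases so that the two estimates splice into the single envelope $b_k=\max\{c^{2k}x_0^2,B\}$ is the crux; the remainder is bookkeeping with the tower property and the self-similar identity for $b_k$.
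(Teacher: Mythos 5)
Your setup --- the strong induction on the horizon $d$ via the claim $C(d)$ anchored at reception times, the self-similarity of the envelope, and the split on the first reception after $m$ --- is the right skeleton, and you have correctly located the step where the naive argument breaks: after invoking $C(d-i)$ you are left with $\condexpect[\Tc]{\max\{c^{2(d-i)}x_{m+i}^2,B\}}{I_m^+}$, and pulling the conditional expectation inside the maximum is the reverse of Jensen's inequality. The problem is that your proposed repair does not close this gap. The event on which the maximum equals its geometric argument is $\{c^{2(d-i)}x_{m+i}^2\ge B\}$, which is decided by the realization of the process noise and the packet drops, not by where the deterministic envelope $c^{2k}x_0^2$ sits relative to $B$; so there is no deterministic ``switching time'' separating a decaying phase from an at-floor phase. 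Moreover, in the purported at-floor phase the online objective does \emph{not} force the conditional second moment to ``remain at $B$'': it only caps $\condexpect[\Tc]{x_k^2}{I_{\LRT{k}}^+}$ by $\max\{c^{2(k-\LRT{k})}x_{\LRT{k}}^2,B\}$, and this cap exceeds $B$ on the positive-probability event that $x_{\LRT{k}}^2$ is large. Indeed, already for $x_0=0$ and $k=2$ one has $\condexpect[\Tc]{\max\{c^2x_1^2,B\}}{I_0^+}>B=\max\{c^4x_0^2,B\}$ whenever the noise has unbounded support, so the black-box chain ``tower property $+$ online objective $+$ induction hypothesis'' provably cannot deliver the envelope $b_k$, no matter how the two regimes are spliced.

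The missing ingredient is the specific linear, additive-noise structure of the dynamics, which your proposal never uses. Between receptions the conditional second moment is an \emph{affine, nondecreasing} function of the squared state at the last reception, $\condexpect[\Tc]{x_{m+s}^2}{I_m^+}=\abar^{2s}x_m^2+\Mbar(a^{2s}-1)$ (exactly the non-max part of the closed form of $H$ in~\eqref{eq:H-form}). The online objective is therefore not merely a bound on one conditional expectation but a \emph{pointwise} domination of this affine map by the convex map $y\mapsto\max\{c^{2s}y,B\}$ over the support of the state at the last reception. Since affine maps commute with conditional expectation, you can evaluate that pointwise domination at the single point $y=\condexpect[\Tc]{x_{m+i}^2}{I_m^+}$ instead of averaging the convex majorant --- this sidesteps Jensen entirely --- and then conclude with the induction hypothesis $\condexpect[\Tc]{x_{m+i}^2}{I_m^+}\le\max\{c^{2i}x_m^2,B\}$, the monotonicity of $y\mapsto\max\{c^{2s}y,B\}$, and the self-similar identity for $b_k$ that you already recorded. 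Without importing the dynamics in this (or an equivalent) way, the induction cannot be closed, so as written the proposal is incomplete.
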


Given this result, our strategy for control design is to satisfy the
stronger but easier to handle online control
objective~\eqref{eq:objective} rather than working directly with the
original objective~\eqref{eq:overall-objective}.

\subsection{Two-step design strategy: nominal and event-triggered
  transmission policies}\label{sec:two-step-design}

In this section, we introduce our event-triggered design strategy to
meet the control objective. Before giving a full description, we first
detail the design principle we have adopted to approach the
problem. Later, we discuss how our two-step design strategy
corresponds to this design principle.
\begin{myquote}{2ex}
  [\emph{Design principle:}] The fundamental principle of
  event-triggered control is to assess if it is necessary to transmit
  at the current time given the control objective and the available
  information about the system and its state (for example, for
  deterministic discrete-time systems with a perfect channel, a
  transmission may be triggered at time~$k$ only if $h_{k+1}$ would be
  greater than $0$ in the absence of a transmission). If the channel
  is not perfect, then its properties must also be taken into
  consideration when deciding whether to transmit or not (for example,
  if the channel induces time delays bounded by $\gamma$, then
  $h_{k+\gamma}$ must be checked in the absence of a transmission at
  time~$k$).  In order to implement this same basic principle for the
  problem at hand, one needs to address the challenges presented by
  the Bernoulli packet drops and the goal of stochastic stability with
  a strict convergence rate requirement (as specified
  in~\eqref{eq:objective}).  A key observation in this regard is the
  fact that \emph{it is not possible to assess the necessity of
    transmission at a given time~$k$ independently of future actions},
  as the occurrence of the next (random) reception time is determined
  by not only the current action but also the future actions. This
  motivates our two-step design strategy. We assess the necessity of
  transmission 
  using a nominal transmission policy in which there is no
  transmission at the current time~$k$.  Our actual transmission
  policy at that time is then based on the expected performance under
  this nominal transmission policy: if the nominal transmission policy
  deems it `not necessary' to transmit on time~$k$, meaning that the
  performance objective is expected to be met under it, then indeed we
  do not transmit on time~$k$.
\end{myquote}

We next describe our design of the event-triggered transmission
policy. The key idea is the belief that, in the absence of reception
of packets, the likelihood of violating the performance criterion must
increase with time. We refer to this as the \emph{monotonicity
  property}. Therefore, we design a transmission policy that overtly
seeks to satisfy the performance criterion~\eqref{eq:objective} only
at the next (random) reception time in order to guarantee that the
performance objective is not violated at any time step. Later, our
analysis will show that the monotonicity property above does indeed
hold.

We seek to design an event-triggered policy $\Tc$ ensuring
\begin{align*}
  \condexpect[\Tc]{ h_{S_{j + 1}} }{ I_{S_j}^+ } \leq 0 , \quad \text{
    for each } j \in \integersnonneg.
\end{align*}
In general, computing the conditional expectation
for an arbitrary event-triggered transmission policy $\Tc$ is
challenging. This is because the evolution of the system state between
consecutive reception times depends on the transmission instants,
which are in turn determined online by the triggering function of the
state and the specific realizations of the noise and the packet
drops. Therefore, we take the two-step strategy described above:
first, we consider a family of nominal quasi-time-triggered
transmission policies $\Tc_k^D$, for which we can compute
$\condexpect[\Tc_k^D]{ h_{\LRT{k + 1}} }{ I_k }$; then, we use this
expectation under the nominal policy to design the event-triggered
policy.

We start by defining a family of \emph{nominal transmission policies} indexed
by $k \in \integersnonneg$ as
\begin{equation}\label{eq:TkD}
  \Tc_k^D : t_i =
  \begin{cases}
    0, \quad i \in \{k, \ldots, k+D-1\},
    \\
    1, \quad i \geq k+D ,
  \end{cases}
\end{equation}
where $D \geq 1$.  Under this nominal policy, no transmissions occur
for the first $D$ time steps from $k$ to $k+D-1$, and transmissions
occur on every time step thereafter ($D$ is therefore the length of
the interval from time $k$ during which no transmissions occur).  With
the nominal policy, we associate the following
\emph{look-ahead} criterion,
\begin{align}\label{eq:GkD}
  \G{k}{D} &\triangleq \condexpect[\Tc_k^D]{ h_{ \LRT{k+1} } }{ I_k }
  \\
  &= \sum_{s=D}^\infty \condexpect{ h_{ \LRT{k+1} } }{ I_k, \LRT{k+1}
    = k + s } (1-p)^{s - D} p , \notag
\end{align}
which is the conditional expectation of the performance function at
the next reception time, given the information at $k$ under the
transmission policy $\Tc_k^D$. 
This interpretation gives rise to the central idea behind our
\emph{event-triggered transmission policy}: if the criterion is
positive (i.e., the performance objective is expected to be violated
at the next reception time if no transmission occurs for $D$
timesteps, and forever after), then we need to start transmitting
earlier to try to revert the situation before it is too
late. Formally, the event-triggered policy $\ETP$, given the last
successful reception time $R_k = S_j$, is
\begin{subequations}\label{eq:ET-design}
  \begin{equation}
    \label{eq:ETP}
    \ETP : t_k =
    \begin{cases}
      0, \quad \text{if } k \in \{ R_k + 1, \ldots, F_k - 1 \}
      \\
      1, \quad \text{if } k \in \{ F_k, \ldots, S_{j+1} \} ,
    \end{cases}
  \end{equation}
  where
  \begin{equation}\label{eq:Fk}
    F_k \triangleq \min \{ \ell > R_k : \G{\ell}{D} \geq 0 \} .
  \end{equation}
\end{subequations}
Thus, under the proposed policy, the sensor transmits on each time
step starting at $F_k$ (the first time after $R_k = S_j$ when the
look-ahead criterion is positive) until a successful reception occurs
at $S_{j+1}$, for each $j \in \integersnonneg$. The complete
transmission policy is then obtained recursively. In the course of the
paper, we analyze the system under the transmission
policy~\eqref{eq:ET-design}, with respect to an arbitrary reception
time $S_j$. Thus, it is convenient to also introduce the notation
  \begin{equation}\label{eq:Tj}
    T_j \triangleq \min \{ \ell > S_j : \G{\ell}{D} \geq 0 \} ,
  \end{equation}
  which is the first time after $S_j$ when a transmission occurs.

  \begin{remark}\longthmtitle{Interpretation of the parameter
      $D$} \label{rem:role-D}
    {\rm The interpretation of the role of the parameter $D$ depends
      on the context. In the nominal policy $\Tc_k^D$, $D$ has the
      role of \emph{idle duration} from $k$ during which no
      transmissions occur. In the actual event-triggered transmission
      policy~\eqref{eq:ET-design}, $D$ has the role of
      \emph{look-ahead horizon}. Specifically, given the information
      available to the sensor at time $k$, the sign of the look-ahead
      function $\G{k}{D}$ answers the question of whether the sensor
      could afford not to transmit for the next $D$ time steps and
      still meet the control objective. If at a time $k$, $G_k^D < 0$,
      then the sensor can afford not to transmit on time steps
      $\{k, \ldots, k+D-1\}$, as there exists a transmission sequence
      in future, given by the nominal policy, that would satisfy the
      control objective. Thus, at a particular time $k$ when
      $G_k^D < 0$, $D$ may be interpreted as \emph{a lower bound on
        the time-to-go for a required transmission}.  Hence,
      intuitively we can see that, in the actual transmission
      policy~(12), a larger value of $D$ makes the policy more
      conservative, because it requires a longer guaranteed
      no-transmission horizon.  }  \oprocend
  \end{remark}

  \begin{remark}\longthmtitle{Special case of deterministic
      channel} \label{rem:nodrop}
    {\rm It is interesting to look at the transmission
      policy~\eqref{eq:ET-design} in the special case of a
      deterministic channel, i.e., no packet drops ($p = 1$). Observe
      from~\eqref{eq:GkD} that in this case,
      $\G{k}{D} = \condexpect{ h_{ k+D } }{ I_k }$. If additionally
      there were no process noise, then this further simplifies to
      $\G{k}{D} = h_{ k+D }$. Then, the policy~\eqref{eq:ET-design}
      reduces to
      \begin{equation*}
        t_k =
        \begin{cases}
          1, \quad \text{if } \G{k}{D} \geq 0
          \\
          0, \quad \text{if } \G{k}{D} < 0 ,
        \end{cases}
      \end{equation*}
      which is a commonly used event-triggering policy for control
      over deterministic channels, see e.g.,~\cite{PT-JC:16-tac}.
      Thus, the proposed policy~\eqref{eq:ET-design} is a natural
      generalization of the basic principle of event-triggering to
      control over channels with probabilistic packet drops.}
    \oprocend
  \end{remark}


\section{Analysis of the system evolution under the nominal
  policy}\label{sec:nominal-analysis}

Here, we characterize the evolution of the system when operating under
the nominal transmission policy. This characterization is key later to
help us provide performance guarantees of the event-triggered
transmission policy.  

\subsection{Performance evaluation functions and their properties}

The following result provides a useful closed-form expression of the
look-ahead criterion $\G{k}{D}$ as a function of $I_k$. Its proof
appears in~\cite{PT-MF-JC:16-allerton}.

\begin{lemma}\longthmtitle{Closed-form expression for the look-ahead
    function~\cite{PT-MF-JC:16-allerton}} \label{lem:closed-form-G}
  The look-ahead function 
  is well defined and takes the form
  \begin{align*}
    \G{k}{D} & = p \Big[ \gD{\abar^2} x_k^2 + 2 \big( \gD{a \abar} -
    \gD{\abar^2} \big) x_k e_k \notag
    \\
    & \quad + \big( \gD{a^2} - 2 \gD{a \abar} + \gD{\abar^2} \big)
    e_k^2 \notag
    \\
    & \quad + \Mbar \left( \gD{a^2} - \frac{1}{p} \right) - \gD{c^2}
    z_k \notag
    \\
    & \quad - \Big( \frac{B}{p} - c^{2q_k^D} \gD{c^2} z_k \Big)
    (1-p)^{q_k^D} \Big] ,
  \end{align*}
  where
  \begin{align}
    \mkern-20mu \gD{b} & \triangleq \frac{ b^D }{ 1 - b(1-p) }, \ \
    \Mbar \triangleq \frac{ M }{ a^2 - 1 }, \ \ z_k \triangleq
    c^{2(k-\LRT{k})} x_{\LRT{k}}^2 , \notag
    \\
    q_k^D & \triangleq \max \left\{ 0, \left\lceil \frac{ \log \left(
            \frac{ x_{\LRT{k}}^2 }{ B } \right) }{ \log(1/c^2) }
      \right\rceil - ( k - \LRT{k} ) - D \right\} . \label{eq:qkD}
  \end{align}
\end{lemma}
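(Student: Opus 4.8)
The plan is to evaluate the series defining $\G{k}{D}$ in~\eqref{eq:GkD} term by term, through a three-step reduction: (i) the law of the next reception time under the nominal policy, (ii) a closed form for the conditional second moment of the state under the intervening open-loop evolution, and (iii) summation of the resulting geometric series. First I would characterize the next reception time under $\Tc_k^D$. Since $\Tc_k^D$ transmits nothing on $\intrangecc{k}{k+D-1}$ (so $r_k=0$, and $\LRT{k}$ remains the last reception) and on every step afterwards with i.i.d.\ Bernoulli$(p)$ successes, the next reception occurs at a time $\LRT{k+1}=k+s$ with $s\ge D$ and $\mathbb{P}(\LRT{k+1}=k+s \mid I_k)=(1-p)^{s-D}p$; these sum to one since $\sum_{s\ge D}(1-p)^{s-D}p=1$. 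As no reception occurs on $\intrangecc{k}{k+s-1}$, the latest reception before $\LRT{k+1}$ is still $\LRT{k}$, so on this event $h_{\LRT{k+1}}=x_{k+s}^2-\max\{c^{2s}z_k,B\}$, using $c^{2((k+s)-\LRT{k})}x_{\LRT{k}}^2=c^{2s}z_k$.

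Second, I would obtain $\condexpect{x_{k+s}^2}{I_k}$ under the no-reception dynamics. On $\intrangecc{k}{k+s-1}$ every $r_i=0$, hence $e_i^+=e_i$ and $\xhat_{i+1}=\abar\xhat_i$; writing $x=\xhat+e$ and verifying $e_{i+1}=ae_i+v_i$ yields $x_{k+s}=\abar^s x_k+(a^s-\abar^s)e_k+N_s$ with $N_s=\sum_{j=0}^{s-1}a^{s-1-j}v_{k+j}$. Because the noise is zero-mean, i.i.d.\ with variance $M$, and independent both of $I_k$ and of the packet drops, conditioning on $\{\LRT{k+1}=k+s\}$ (a drop event) does not change the law of $x_{k+s}$ given $I_k$; thus $\condexpect{x_{k+s}^2}{I_k,\LRT{k+1}=k+s}=(\abar^s x_k+(a^s-\abar^s)e_k)^2+\Mbar(a^{2s}-1)$, where $\expect{N_s^2}=M\frac{a^{2s}-1}{a^2-1}=\Mbar(a^{2s}-1)$ uses $|a|>1$. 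Expanding the square supplies the coefficients of $x_k^2$, $x_ke_k$ and $e_k^2$ in the statement.

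Third, I would substitute into $\G{k}{D}=\sum_{s=D}^\infty\big(\condexpect{x_{k+s}^2}{I_k}-\max\{c^{2s}z_k,B\}\big)(1-p)^{s-D}p$ and evaluate $\sum_{s\ge D}b^s(1-p)^{s-D}p=p\,\gD{b}$ for $b\in\{\abar^2,a\abar,a^2,c^2\}$. This also settles well-definedness: convergence needs $b(1-p)<1$, which holds from $\abar^2<c^2<1$, the standing assumption $a^2(1-p)<1$, $c^2<1$, and, for $b=a\abar$, the bound $|a\abar|(1-p)=|\abar|\,(|a|(1-p))<1$ obtained from $|\abar|<1$ and $|a|(1-p)\le a^2(1-p)<1$. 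The noise offset contributes $\Mbar(\gD{a^2}-1/p)$ after $\sum_{s\ge D}(1-p)^{s-D}=1/p$.

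The main obstacle is the last term $\Sigma:=\sum_{s\ge D}\max\{c^{2s}z_k,B\}(1-p)^{s-D}p$, where the $\max$ must be resolved. Since $c^{2s}z_k$ decreases in $s$, I would write $\max\{c^{2s}z_k,B\}=c^{2s}z_k+(B-c^{2s}z_k)\indfun{\{c^{2s}z_k<B\}}$; the first piece sums to $p\,\gD{c^2}z_k$, and the indicator activates precisely for $s>s^\ast:=\log(z_k/B)/\log(1/c^2)$. The delicate bookkeeping is to show that, intersected with $s\ge D$, the indicator support is $s\ge D+q_k^D$: substituting $z_k=c^{2(k-\LRT{k})}x_{\LRT{k}}^2$ gives $s^\ast=\frac{\log(x_{\LRT{k}}^2/B)}{\log(1/c^2)}-(k-\LRT{k})$, so $\lceil s^\ast\rceil=\lceil\frac{\log(x_{\LRT{k}}^2/B)}{\log(1/c^2)}\rceil-(k-\LRT{k})$ and hence $\max\{D,\lceil s^\ast\rceil\}=D+q_k^D$ with $q_k^D$ as in~\eqref{eq:qkD} (the boundary case $c^{2s}z_k=B$ is harmless, as the indicand vanishes there). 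Reindexing by $l=s-D-q_k^D$ and summing the two geometric series then gives $\Sigma=p\,\gD{c^2}z_k+p\big(\tfrac{B}{p}-c^{2q_k^D}\gD{c^2}z_k\big)(1-p)^{q_k^D}$. Collecting the three contributions into $\G{k}{D}=(\text{state sum})-\Sigma$ reproduces the stated closed form; I expect this ceiling/threshold accounting to be the only subtle point, everything else being linear-recursion solving and geometric summation.
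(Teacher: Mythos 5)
Your proposal is correct: the law of the next reception time under $\Tc_k^D$, the open-loop recursion $x_{k+s}=\abar^s x_k+(a^s-\abar^s)e_k+N_s$ with $\expect{N_s^2}=\Mbar(a^{2s}-1)$, the geometric sums yielding $p\,\gD{b}$, and the resolution of the $\max$ via the threshold $D+q_k^D$ are exactly the computation that produces the stated closed form (the paper defers this proof to its conference version, but the structure of the formula leaves essentially no other route). The one subtle point you flag — the ceiling/boundary bookkeeping for $q_k^D$ and the independence of the noise from the drop event $\{\LRT{k+1}=k+s\}$ — is handled correctly, so no gaps remain.
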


The function $\G{k}{D}$ helps determine whether or not to transmit at
time $k$. However, to analyze the evolution of the performance
function $\pf_k$ between successive reception times $S_j$ and
$S_{j+1}$, we introduce the \emph{performance-evaluation} function,
\begin{align}\label{eq:Gp-kD}
  \Gp{k}{D} &\triangleq \condexpect[\Tc_k^D]{ h_{ \LRT{k+1}^+ } }{
    I_k^+ }
  \\
  &= \sum_{s=D}^\infty \condexpect{ h_{ \LRT{k+1}^+ } }{ I_k^+,
    \LRT{k+1} = k + s } (1-p)^{s - D} p . \notag
\end{align}
Note the similarity with the definition of $\G{k}{D}$ (with the
exception that $ \Gp{k}{D}$ is conditioned upon the
information~$I_k^+$).  Observe that $\Gp{k}{D} \neq \G{k}{D}$ only if
$k = S_j$ for some $j$. Hence we focus on $\Gp{S_j}{D}$ for
$j \in \integersnonneg$,
\begin{equation}\label{eq:Gp-SjD}
  \Gp{S_j}{D} = \sum_{s=D}^\infty H(s, x_{S_j}^2) (1-p)^{s - D} p ,
\end{equation}
where
\begin{equation}
  \label{eq:H-def}
  H(s, x_{S_j}^2) \triangleq \condexpect{ h_{ S_j + s } }{ I_{S_j}^+, S_j
    + s \leq S_{j+1} } ,
\end{equation}
which we call the \emph{open-loop performance evolution} 
function. This function describes the evolution of the expected value
of the performance function in open loop, during the inter-reception
times, conditioned upon $I_{S_j}^+$, the information available at the
last reception time upon reception.

\begin{remark}\longthmtitle{Necessary condition for second-moment
    stability} \label{rem:necc-condition-standing} {\rm The condition
    $a^2(1-p) < 1$, which we assumed in the standing assumption in
    Section~\ref{sec:prob-stat}, is necessary for the convergence of
    the series~\eqref{eq:GkD} and~\eqref{eq:Gp-SjD}, which define the
    look-ahead criterion and performance-evaluation function,
    respectively. This can be seen from the proofs of
    Lemma~\ref{lem:closed-form-G} and Lemma~\ref{lem:closed-form-Gp},
    in~\cite{PT-MF-JC:16-allerton}. The necessity of the condition
    $a^2(1-p) < 1$ for second-moment stability can also be derived
    from the information-theoretic or data-rate arguments employed
    in~\cite{PM-MF-SD-GNN:09, MF-PM:14}.} \oprocend
\end{remark}

The next result gives closed-form expressions for the
performance-evaluation function~$\Gp{S_j}{D}$ 
and the open-loop performance evaluation function~$H$. The proof
appears in~\cite{PT-MF-JC:16-allerton}.

\begin{lemma}\longthmtitle{Closed-form expressions for the
    performance-evaluation  and the open-loop performance evaluation
    functions~\cite{PT-MF-JC:16-allerton}} \label{lem:closed-form-Gp}
  The performance-evaluation function 
  is well defined and takes the form
  \begin{align*}
    \Gp{S_j}{D} & = p \Big[ \gD{\abar^2} x_{S_j}^2 + \Mbar \left(
      \gD{a^2} - \frac{1}{p} \right) - \gD{c^2} x_{S_j}^2 \notag
    \\
    & \quad - \Big( \frac{B}{p} - c^{2 \qp{k}{D} } \gD{c^2} x_{S_j}^2
    \Big) (1-p)^{\qp{S_j}{D}} \Big] , 
  \end{align*}
  where $\gDD$ is defined in~\eqref{eq:qkD} and
  \begin{align}
    \label{eq:qp-SjD}
    \qp{S_j}{D} & \triangleq \max \left\{ 0, \left\lceil \frac{ \log
          \left( \frac{ x_{S_j}^2 }{ B } \right) }{ \log(1/c^2) }
      \right\rceil - D \right\} .
  \end{align}
  The open-loop performance evaluation function 
  takes the form
    \begin{align}\label{eq:H-form}
      H(s, y) = \abar^{2s} y + \Mbar ( a^{2s} - 1 ) - \max \{ c^{2s}
      y, B \} .
    \end{align}
\end{lemma}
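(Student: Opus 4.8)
The plan is to establish the two closed-form expressions in sequence: first the open-loop performance evolution function $H$ in~\eqref{eq:H-form}, and then, building on it, the performance-evaluation function $\Gp{S_j}{D}$ by carrying out the summation in~\eqref{eq:Gp-SjD}.

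For $H$, I would first characterize the state trajectory on the conditioning event $\{S_j+s\leq S_{j+1}\}$, i.e.\ when no packet is received strictly between $S_j$ and $S_j+s$. On this event the controller's estimate obeys the noise-free recursion $\xhat_{k+1}=\abar\xhat_k^+$ seeded by $\xhat_{S_j}^+=x_{S_j}$, so $\xhat_{S_j+s}^+=\abar^s x_{S_j}$ is deterministic given $I_{S_j}^+$. Substituting this into~\eqref{eq:x-evolve} and solving by induction on $s$, I expect $x_{S_j+s}=\abar^s x_{S_j}+\sum_{i=0}^{s-1}a^{s-1-i}v_{S_j+i}$. The structural point is that the deterministic part propagates with the closed-loop gain $\abar$ (the estimate tracks it exactly), while the unobserved noise accumulates with the open-loop gain $a$, since the controller cannot correct for it during the silent interval. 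Because the noise is zero-mean, i.i.d., uncorrelated with the state, and---under the deterministic transmission schedule of $\Tc_{S_j}^D$---independent of the drop-determined event $\{S_j+s\leq S_{j+1}\}$, the conditional second moment has vanishing cross terms and equals $\abar^{2s}x_{S_j}^2+M\sum_{i=0}^{s-1}a^{2i}$. Summing and using $\Mbar=M/(a^2-1)$ gives the noise term $\Mbar(a^{2s}-1)$; since $\LRT{S_j+s}=S_j$ on this event, subtracting the deterministic penalty $\max\{c^{2s}x_{S_j}^2,B\}$ from~\eqref{eq:pf} yields~\eqref{eq:H-form}.

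For $\Gp{S_j}{D}$, I would substitute~\eqref{eq:H-form} into~\eqref{eq:Gp-SjD} and evaluate term by term. The three ``smooth'' pieces are geometric: $p\sum_{s\geq D}\abar^{2s}(1-p)^{s-D}x_{S_j}^2=p\,\gD{\abar^2}x_{S_j}^2$, while the two $\Mbar$ contributions combine into $p\Mbar(\gD{a^2}-1/p)$, the constant part summing to $\Mbar$ exactly because $p\sum_{s\geq D}(1-p)^{s-D}=1$. Convergence of each is guaranteed by the standing assumptions, which give $\abar^2(1-p)<1$, $a^2(1-p)<1$, and $c^2(1-p)<1$.

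The main obstacle is the remaining piecewise term $-p\sum_{s\geq D}\max\{c^{2s}x_{S_j}^2,B\}(1-p)^{s-D}$. Since $c\in(0,1)$, the quantity $c^{2s}x_{S_j}^2$ exceeds $B$ exactly for $s$ below $\sigma=\log(x_{S_j}^2/B)/\log(1/c^2)$, so the summand switches regime at $N=\lceil\sigma\rceil$, and $\qp{S_j}{D}=\max\{0,N-D\}$ in~\eqref{eq:qp-SjD} counts how many indices $s\geq D$ remain in the regime $\max=c^{2s}x_{S_j}^2$. I would split into two cases. If $\qp{S_j}{D}=0$ (so $N\leq D$) every term uses $B$ and the sum collapses to $-B$, which one checks matches the stated formula on setting $\qp{S_j}{D}=0$. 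If $\qp{S_j}{D}>0$, I would split at $N$: the tail $s\geq N$ contributes $-B(1-p)^{\qp{S_j}{D}}$, and the finite block $D\leq s\leq N-1$ is a truncated geometric sum equal to $\gD{c^2}x_{S_j}^2\big(1-(c^2(1-p))^{\qp{S_j}{D}}\big)$; regrouping reproduces the $-\gD{c^2}x_{S_j}^2$ term and the $-(B/p-c^{2\qp{S_j}{D}}\gD{c^2}x_{S_j}^2)(1-p)^{\qp{S_j}{D}}$ term. The bookkeeping I expect to be most delicate is identifying the switch index $N$ against the ceiling in~\eqref{eq:qp-SjD} and handling the boundary tie when $\sigma$ is an integer (i.e.\ $c^{2N}x_{S_j}^2=B$), where the term may be assigned to either branch without changing the total.
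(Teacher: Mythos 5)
Your proposal is correct and follows what is essentially the only natural route (and the one the paper defers to its conference version): solve the between-reception recursion to get $x_{S_j+s}=\abar^s x_{S_j}+\sum_{i=0}^{s-1}a^{s-1-i}v_{S_j+i}$, use zero-mean i.i.d.\ noise independent of the drop-determined conditioning event to obtain \eqref{eq:H-form}, and then sum the geometric series in \eqref{eq:Gp-SjD}, splitting the $\max$ term at the switch index counted by $\qp{S_j}{D}$. All the series manipulations and the case analysis on $\qp{S_j}{D}$ check out, including the convergence conditions and the boundary tie.
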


The next result specifies some useful properties of the look-ahead
$\G{k}{D}$ and the performance-evaluation $\Gp{k}{D}$ functions.  The
proof appears in~\cite{PT-MF-JC:16-allerton}. 

\begin{proposition}\longthmtitle{Properties of the
    look-ahead and performance-evaluation
    functions~\cite{PT-MF-JC:16-allerton}} \label{prop:GkD-prop} 
  For $D \in \integerspos$, under the same hypotheses as in
  Proposition~\ref{prop:H}, the following hold:
  \begin{itemize}
  \item[(a)] Let $\Tc$ be any transmission policy. Then, for any $k
    \in \integersnonneg$,
    \begin{align*}
      &\condexpect[\Tc]{ \G{k+1}{D} }{ I_k, r_k = 0 } = \G{k}{D+1},
      \\
      &\condexpect[\Tc]{ \G{k+1}{D} }{ I_k, r_k = 1 } = \Gp{k}{D+1} .
    \end{align*}
  \item[(b)] For $\Dc \in \integerspos$, define
    \begin{equation}\label{eq:param-constraint}
      \mkern-10mu  \Gc(\Dc) \triangleq \left( g_\Dc( \abar^2 ) -
        g_\Dc(c^2) \right) \frac{ B }{ c^{2\Dc} }  
      + \Mbar \left( g_\Dc(a^2) - \frac{1}{p} \right) .
    \end{equation}
    If $\Gc(\Dc) < 0$, then $\Gp{S_j}{\Dc} < 0$, for any
    $j \in \integersnonneg$.
  \item[(c)] Suppose the hypothesis of (b) is true. Then, for
    $d \in \{ 1, \ldots, \Dc \}$ and for any $j \in \integersnonneg$,
    $\Gp{S_j}{d} \leq \Gp{S_j}{d+1}$.
  \end{itemize}
\end{proposition}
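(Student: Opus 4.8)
The plan is to handle the three parts in turn, relying on the closed forms of Lemma~\ref{lem:closed-form-G} and Lemma~\ref{lem:closed-form-Gp} and on the geometric-series structure of $g_D$.

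For part (a), the key observation is that, by Lemma~\ref{lem:closed-form-G}, $\G{k+1}{D}$ is a deterministic function of the information $I_{k+1}$, and that, conditioned on $I_k$ and on the realized value of $r_k$, the only remaining randomness in $I_{k+1}$ is the process noise $v_k$, whose law is independent of the policy $\Tc$; this is exactly what makes the identities hold for an arbitrary $\Tc$. I would verify both identities by direct substitution: writing $x_{k+1} = \abar x_k - L e_k^+ + v_k$ and taking the expectation over $v_k$ (using $\expect{v_k}=0$ and $\expect{v_k^2}=M$), I would match the result against the closed form of $\G{k}{D+1}$, respectively $\Gp{k}{D+1}$. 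Two algebraic facts drive the match: the recursion $g_{D+1}(b) = b\, g_D(b)$, which accounts for the shift of the idle horizon from $D+1$ at time $k$ to $D$ at time $k+1$; and the exponent bookkeeping $q_{k+1}^{D} = q_k^{D+1}$ when $r_k = 0$ (since then $\LRT{k+1}=\LRT{k}$ and $e_k^+=e_k$) versus $q_{k+1}^{D} = \qp{k}{D+1}$ when $r_k = 1$ (since then $\LRT{k+1}=k$ and $e_k^+=0$), the first landing in the look-ahead branch $\G{k}{D+1}$ and the second in the performance-evaluation branch $\Gp{k}{D+1}$. A cleaner conceptual route for the $r_k=0$ case is the tower property: the nominal policy $\Tc_k^{D+1}$ is one idle step at $k$ followed by $\Tc_{k+1}^{D}$, and no transmission at $k$ induces the same transition law as conditioning on $r_k=0$, so $\G{k}{D+1} = \condexpect{\G{k+1}{D}}{I_k, r_k=0}$.

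For part (b), I would exploit the geometric-series identity $g_D(b) = \sum_{s=D}^\infty b^s (1-p)^{s-D}$. Applying it to $\Gc(\Dc)$ and using $1 = \sum_{s=\Dc}^\infty (1-p)^{s-\Dc}p$ gives
\begin{equation*}
  p\,\Gc(\Dc) = \sum_{s=\Dc}^\infty \tilde H(s)\,(1-p)^{s-\Dc}p, \qquad \tilde H(s) \triangleq (\abar^{2s}-c^{2s})\frac{B}{c^{2\Dc}} + \Mbar(a^{2s}-1),
\end{equation*}
while~\eqref{eq:Gp-SjD} with~\eqref{eq:H-form} gives $\Gp{S_j}{\Dc} = \sum_{s=\Dc}^\infty H(s, x_{S_j}^2)(1-p)^{s-\Dc}p$. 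Since the weights are nonnegative, it suffices to prove the termwise bound $H(s,y) \le \tilde H(s)$ for every $s \ge \Dc$ and every $y \ge 0$; summing then yields $\Gp{S_j}{\Dc} \le p\,\Gc(\Dc) < 0$. The termwise bound reduces to $\abar^{2s}(y - B c^{-2\Dc}) \le \max\{c^{2s}y, B\} - c^{2s} B c^{-2\Dc}$, which I would verify by cases: for $y \le B c^{-2\Dc}$ the left side is nonpositive while the right side is nonnegative (because $c^{2s}B c^{-2\Dc} = B c^{2(s-\Dc)} \le B$), and for $y > B c^{-2\Dc}$ one has $c^{2s}y > B$, so $\max\{c^{2s}y,B\} = c^{2s}y$ and the inequality collapses to $\abar^{2s} \le c^{2s}$, which holds since $\abar^2 < c^2$.

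For part (c), I would first peel off the $s=d$ term of the series~\eqref{eq:Gp-SjD} to obtain the one-step recursion $\Gp{S_j}{d} = p\, H(d, x_{S_j}^2) + (1-p)\, \Gp{S_j}{d+1}$, whence $\Gp{S_j}{d+1} - \Gp{S_j}{d} = p\big(\Gp{S_j}{d+1} - H(d, x_{S_j}^2)\big)$. Thus the monotonicity claim is equivalent to the pointwise inequality $H(d, x_{S_j}^2) \le \Gp{S_j}{d+1}$ for $d \in \{1,\dots,\Dc\}$. Since $\Gp{S_j}{d+1}$ is a geometric average of the values $H(s, x_{S_j}^2)$ over $s \ge d+1$, I would establish this bound using the shape properties of the open-loop evolution $H(\cdot,y)$ furnished by Proposition~\ref{prop:H}, together with the hypothesis $\Gc(\Dc) < 0$; the latter, through the representation derived in part (b), controls the behaviour of $H$ for large state values, and the restriction $d \le \Dc$ keeps the comparison in the regime where the average over longer horizons dominates $H(d,\cdot)$. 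This last step is the main obstacle: $H(\cdot,y)$ is \emph{not} monotone in $s$ in general, since the stable contribution $\abar^{2s}y$ and the reference $\max\{c^{2s}y,B\}$ decay while the estimation-error term $\Mbar a^{2s}$ grows (recall $a^2>1$), so the argument must carefully isolate the ranges of $s$ and of $x_{S_j}^2$ in which the desired domination holds rather than appealing to a blanket monotonicity of $H$.
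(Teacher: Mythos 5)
Parts (a) and (b) are essentially sound. For (a), both routes you sketch work: the closed forms of Lemma~\ref{lem:closed-form-G} and Lemma~\ref{lem:closed-form-Gp} show $\G{k+1}{D}$ is a deterministic function of $I_{k+1}$, and $I_{k+1}$ is a deterministic function of $(I_k, r_k, v_k)$ (indeed $e_{k+1}=a e_k^+ + v_k$), so the conditional law is policy-independent; the identities $g_{D+1}(b)=b\,g_D(b)$ and your bookkeeping of $q$ do the rest, and the tower/concatenation view ($\Tc_k^{D+1}$ equals one idle step followed by $\Tc_{k+1}^{D}$) is the cleanest way to see it. For (b), the representation $p\,\Gc(\Dc)=\sum_{s\ge\Dc}\tilde H(s)(1-p)^{s-\Dc}p$ and the reduction to the termwise bound $H(s,y)\le\tilde H(s)$ is a nice argument, but your second case contains an error: $y>Bc^{-2\Dc}$ does \emph{not} imply $c^{2s}y>B$ for $s>\Dc$, since that would require $y>Bc^{-2s}$ and $c^{-2s}\ge c^{-2\Dc}$. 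You need the extra subcase $y>Bc^{-2\Dc}$ with $c^{2s}y<B$; there the target inequality becomes $\abar^{2s}(y-Bc^{-2\Dc})\le B(1-c^{2(s-\Dc)})$, which does hold because $y<Bc^{-2s}$ gives $\abar^{2s}(y-Bc^{-2\Dc})<B(\abar^2/c^2)^{s}(1-c^{2(s-\Dc)})\le B(1-c^{2(s-\Dc)})$ using $\abar^2<c^2$. With that repair, (b) is complete.

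The genuine gap is part (c). Your peeling identity $\Gp{S_j}{d}=p\,H(d,x_{S_j}^2)+(1-p)\Gp{S_j}{d+1}$ is correct, and it does reduce the claim to $H(d,x_{S_j}^2)\le \Gp{S_j}{d+1}$ (equivalently $H(d,x_{S_j}^2)\le\Gp{S_j}{d}$), but you then only gesture at ``shape properties of $H$'' and explicitly concede that this step is ``the main obstacle''---so the proof is not actually given. The difficulty is real: $H(\cdot,y)$ is not monotone, and the obvious decompositions fail (for instance, dropping the $\max\{c^{2s}y,B\}$ term and comparing only $\abar^{2s}y+\Mbar(a^{2s}-1)$ against its geometric average leaves a residual condition that is violated for large $y$, so the coupling with the reference term is essential). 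What you can legitimately extract from the paper's toolkit is this much: since $\Gc$ is strictly increasing (Lemma~\ref{cor:Gc}), $\Gc(\Dc)<0$ gives $\Gc(d)<0$ and hence, by your part (b), $\Gp{S_j}{d}<0$ for every $d\le\Dc$; combined with Proposition~\ref{prop:H} this forces $H(d,x_{S_j}^2)\le 0$ for $d\le\Dc$, since otherwise every term in the series for $\Gp{S_j}{d}$ would be positive. But $H(d,x_{S_j}^2)\le 0$ and $\Gp{S_j}{d}<0$ do not by themselves order these two quantities, so the key inequality remains unestablished. As written, part (c) is a plan rather than a proof, and it is precisely the part where the content of the statement lies.
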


The value of the function $\Gc$ (defined
in~\eqref{eq:param-constraint}) at $D$ has the interpretation of being
a uniform (over the plant state space) upper bound on $\Gp{S_j}{D}$,
the expectation of the open-loop performance function at the next
(random) reception time.
The condition $\Gc(D)<0$ can be interpreted as establishing a lower
bound on the value of $B$, the ultimate bound, as a function of the
system and communication channel parameters.  The next result
establishes a useful property of~$\Gc$ which would be useful in our
forthcoming analysis.

\begin{lemma}\longthmtitle{The function $\Gc$ is strictly
    increasing} \label{cor:Gc}
  Under the hypotheses of Proposition~\ref{prop:H}, the function $\Gc$
  (cf.~\eqref{eq:param-constraint}) is strictly increasing on
  $[0, \infty)$.
\end{lemma}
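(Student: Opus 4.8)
The plan is to treat $\Gc$ as a function of a real variable $\Dc\in[0,\infty)$ — legitimate since $b^{\Dc}=\e^{\Dc\ln b}$ is smooth in $\Dc$ — and to show its derivative stays positive. First I would collect the $\Dc$-dependence of~\eqref{eq:param-constraint} into a transparent form: substituting $g_\Dc(b)=b^{\Dc}/(1-b(1-p))$ and cancelling the factor $c^{2\Dc}$ in the first summand gives
\begin{align*}
  \Gc(\Dc) &= \frac{B}{1-\abar^2(1-p)}\Big(\frac{\abar^2}{c^2}\Big)^{\Dc} + \frac{\Mbar\,(a^2)^{\Dc}}{1-a^2(1-p)} \\
  &\quad - \Big(\frac{B}{1-c^2(1-p)}+\frac{\Mbar}{p}\Big).
\end{align*}
Under the standing assumptions the three denominators $1-\abar^2(1-p)$, $1-c^2(1-p)$, $1-a^2(1-p)$ are strictly positive, $\Mbar=M/(a^2-1)>0$, and $B\ge0$; moreover $\abar^2/c^2\in(0,1)$ because $\abar^2<c^2$, while $a^2>1$ because $|a|>1$. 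So $\Gc$ is a decaying exponential in $\Dc$ (weighted by $B$) plus a growing exponential (weighted by $\Mbar$) plus a constant.

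Next I would differentiate term by term and exploit a convexity observation that localizes the problem to the left endpoint. One computes
\begin{align*}
  \Gc'(\Dc) &= \frac{B\ln(\abar^2/c^2)}{1-\abar^2(1-p)}\Big(\frac{\abar^2}{c^2}\Big)^{\Dc} + \frac{\Mbar\ln(a^2)}{1-a^2(1-p)}\,(a^2)^{\Dc}, \\
  \Gc''(\Dc) &= \frac{B(\ln(\abar^2/c^2))^2}{1-\abar^2(1-p)}\Big(\frac{\abar^2}{c^2}\Big)^{\Dc} + \frac{\Mbar(\ln(a^2))^2}{1-a^2(1-p)}\,(a^2)^{\Dc}.
\end{align*}
Every factor in $\Gc''$ is nonnegative and its second summand is strictly positive, so $\Gc''(\Dc)>0$ on $[0,\infty)$. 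Hence $\Gc$ is strictly convex and $\Gc'$ is strictly increasing, so $\Gc'(\Dc)\ge\Gc'(0)$ for all $\Dc\ge0$. It therefore suffices to establish the single endpoint inequality $\Gc'(0)\ge0$; combined with strict convexity this forces $\Gc'(\Dc)>0$ for every $\Dc>0$ and hence strict monotonicity on $[0,\infty)$.

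The step I expect to be the main obstacle is verifying the endpoint estimate
\begin{align*}
  \Gc'(0)=\frac{\Mbar\ln(a^2)}{1-a^2(1-p)}-\frac{B\ln(c^2/\abar^2)}{1-\abar^2(1-p)}\ge 0 ,
\end{align*}
i.e.\ that the noise-driven growth term (present precisely because $a^2>1$) dominates the $B$-weighted decay term at $\Dc=0$ — the $B$-summand of $\Gc$ is genuinely \emph{decreasing}, so monotonicity of $\Gc$ cannot come for free and hinges on this single inequality. This is exactly where the hypotheses of Proposition~\ref{prop:H} must enter, constraining the ultimate bound $B$ relative to the noise level $\Mbar$ and the channel/system parameters so that the inequality holds; this is consistent with the feasible regime of the design, since $\Gc$ increasing together with $\Gc(D)<0$ would in turn force $\Gc(0)<0$. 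I would therefore isolate this endpoint estimate, make the threshold on $B$ it imposes explicit, and check that it is implied by those standing hypotheses, the remaining steps being routine once the inequality is secured.
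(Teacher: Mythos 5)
Your reduction is correct as far as it goes: the rewriting of $\Gc$ after cancelling $c^{2\Dc}$, the formulas for $\Gc'$ and $\Gc''$, and the conclusion that $\Gc''>0$ (so $\Gc$ is strictly convex, $\Gc'$ is strictly increasing, and everything hinges on the sign of $\Gc'(0)$) are all right. But the step you defer as ``the main obstacle'' is not a routine verification that the hypotheses will supply --- it is false under them. The endpoint inequality $\Gc'(0)\ge 0$ is equivalent to
\begin{equation*}
  B \;\le\; \frac{\Mbar\log(a^2)}{\log\left(\frac{c^2}{\abar^2}\right)}\cdot\frac{1-\abar^2(1-p)}{1-a^2(1-p)} \;=\; B_c\,\frac{1-\abar^2(1-p)}{1-a^2(1-p)},
\end{equation*}
an \emph{upper} bound on $B$, whereas the hypotheses of Proposition~\ref{prop:H} impose only the lower bound $B>B^*>B_c$ (cf.~\eqref{eq:Bc}); for $B$ large the inequality necessarily fails. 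It already fails for the paper's own simulation parameters ($a=1.05$, $p=0.6$, $M=1$, $c=0.98$, $\abar=0.95c$, $B=15.5$, $B^*=12.92$): there $\Gc'(0)\approx-0.73$ and indeed $\Gc(1)\approx-0.77<\Gc(0)\approx-0.25$, so $\Gc$ is decreasing near the origin before turning around (it only becomes positive near $\Dc\approx 4$). Your own parenthetical observation --- that the $B$-weighted summand of $\Gc$ is genuinely decreasing while the hypotheses constrain $B$ only from below --- is precisely why the plan cannot be completed.

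For comparison, the paper's proof takes the same differentiation route but closes with a sign error: invoking $B>B^*>B_c$, it replaces $\Mbar\log(a^2)$ by the \emph{larger} quantity $B\log(c^2/\abar^2)$, thereby obtaining an upper bound on $\Gc'$, shows that this upper bound is positive, and concludes $\Gc'>0$ --- a non sequitur. So you have correctly located the crux that the paper's argument also fails to establish; the difference is that you left the gap open while the paper papers over it. What your convexity computation does legitimately deliver is that $\{\Dc\ge 0:\Gc(\Dc)<0\}$ is an interval; combined with a separate verification that $\Gc$ is negative at the left end of the range of interest, that weaker conclusion is what the downstream Corollary~\ref{cor:ETP-smaller-D} actually uses, and it is the natural way to salvage the argument without claiming monotonicity of $\Gc$ on all of $[0,\infty)$.
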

\begin{proof}
  The derivative of $\Gc$ with respect to $\Dc$ is
  \begin{align*}
    \frac{ \mathrm{d} \Gc }{ \mathrm{d} \Dc}
    &= \Mbar \log(a^2) \frac{ a^{2\Dc} }{ 1 - a^2(1-p) } - B \log
      \left( \frac{ c^2 }{ \abar^2 } \right) \frac{ \abar^{2\Dc} }{ 1
      - \abar^2(1-p) }
    \\
    &< B \log \left( \frac{ c^2 }{ \abar^2 } \right) \left[ \frac{
      a^{2\Dc} }{ 1 - a^2(1-p) } - \frac{ ( \abar / c )^{2\Dc} }{ 1 -
      \abar^2(1-p) } \right] ,
  \end{align*}
  where the inequality follows from the assumption that $B > B^*$
  and~\eqref{eq:Bc}. Then, observe that for $\Dc \geq 0$
  \begin{align*}
    &a^{2\Dc} ( 1 - \abar^2(1-p) ) - ( \abar / c )^{2\Dc} ( 1 - a^2(1-p)
      )
    \\
    &> (a^{2\Dc} - ( \abar / c )^{2\Dc}) ( 1 - a^2(1-p) ) > 0 ,
  \end{align*}
  where the inequalities follow form the fact $\abar^2 < c^2 < a^2$.
  Thus, $\frac{ \mathrm{d} \Gc }{ \mathrm{d} \Dc} > 0$ for
  $\Dc \geq 0$.
\end{proof}

\subsection{Monotonicity of the open-loop performance function}

This section establishes the monotonicity of the open-loop performance
function~$H$, which forms the basis for our main results. Recall from
our discussion in Section~\ref{sec:two-step-design} that this property
refers to the intuition that, in the absence of reception of packets,
the likelihood of violating the performance criterion must increase
with time. This property is captured by the following result.

\begin{proposition}\longthmtitle{Monotonicity of the open-loop
    performance function}\label{prop:H} 
  There exists
    \begin{equation}\label{eq:Bc}
      B^*  > B_c \triangleq \Mbar \frac{ \log(a^2) }{ \log \left(
          \frac{ c^2 }{ \abar^2 } \right) } > 0 
    \end{equation}
    such that if $B > B^*$ then for each $y \in \realnonneg$, the
    function $H(. , y)$ has the property:
  \begin{align}\label{eq:Hprop}
    H( s_1, y ) > 0 \implies H( s_2, y ) > 0, \quad \forall s_2 \geq
    s_1 .
  \end{align}
\end{proposition}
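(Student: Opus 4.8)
The plan is to analyze the closed form $H(s,y)=\abar^{2s}y+\Mbar(a^{2s}-1)-\max\{c^{2s}y,B\}$ of Lemma~\ref{lem:closed-form-Gp} (cf.~\eqref{eq:H-form}) as a function of the real variable $s\ge0$, and to recast~\eqref{eq:Hprop} as the statement that $\{s\ge0:H(s,y)>0\}$ is an \emph{up-set} (a ray $(s_0,\infty)$). The first move is to remove the $\max$: writing $N(s)\triangleq\abar^{2s}y+\Mbar(a^{2s}-1)$, we have $H=\min\{\psi_1,\psi_2\}$ with $\psi_1\triangleq N-c^{2s}y$ and $\psi_2\triangleq N-B$, so $\{H>0\}=\{\psi_1>0\}\cap\{\psi_2>0\}$. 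Since an intersection of up-sets is an up-set, it suffices to control these two sets. Note $N$ is convex (a nonnegative combination of $\abar^{2s}$ and $a^{2s}$, recall $\abar^2<1<a^2$), so $\psi_2$ is convex; moreover $H(0,y)=\min\{0,y-B\}\le0$ and $H(s,y)\to+\infty$ since $\Mbar a^{2s}\to\infty$.

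When $y\le B$ the claim is immediate: then $c^{2s}y\le y\le B$ for all $s\ge0$, so $H=\psi_2$ is convex with $H(0,y)\le0$, and a convex function that is nonpositive at $0$ and tends to $+\infty$ has $\{H>0\}=(s_0,\infty)$. All the work is in $y>B$ (hence also $y>B_c$, cf.~\eqref{eq:Bc}, since $B>B^*>B_c$). For $\psi_1$ I would note $\psi_1(0)=0$, $\psi_1'(0)=\log(c^2/\abar^2)(B_c-y)<0$, and that $\psi_1(s)=y\,\abar^{2s}-y\,c^{2s}-\Mbar+\Mbar a^{2s}$ is a generalized exponential sum whose coefficient sequence, ordered by increasing base $\abar^2<c^2<1<a^2$, is $(+,-,-,+)$ with two sign changes; by Descartes' rule for exponential sums it has at most two real zeros. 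As $s=0$ is one and $\psi_1\to+\infty$, the second zero $s_A>0$ is a single up-crossing, so $\{\psi_1>0\}=(s_A,\infty)$ is an up-set.

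It remains to understand $\{\psi_2>0\}=\{N>B\}$ on $[s^*,\infty)$, where $s^*$ solves $c^{2s^*}y=B$ (the location of the kink). By convexity of $\psi_2$, the set $\{N>B\}\cap[s^*,\infty)$ is an up-set \emph{unless} $N(s^*)>B$ while $N$ is still decreasing at $s^*$ and dips below $B$ afterwards; this is the one bad configuration I must exclude. The crux, which I expect to be the main obstacle, is the implication $N(s^*)\ge B\Rightarrow N'(s^*)\ge0$, and this is exactly where $B>B^*$ enters. Parametrizing by $s^*$ (so $y=Bc^{-2s^*}$, $\abar^{2s^*}y=B(\abar^2/c^2)^{s^*}$), the kink value minus $B$, namely $B((\abar^2/c^2)^{s^*}-1)+\Mbar(a^{2s^*}-1)$, is convex in $s^*$, vanishes at $s^*=0$ with negative slope (because $B>B_c$), hence is positive only for $s^*>\hat s$, a unique threshold $\hat s=\hat s(B)$. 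Substituting the defining relation $N(\hat s)=B$ into $N'(\hat s)$ reduces the desired sign to $(\abar^2/c^2)^{\hat s}\le\frac{\log(a^2)(\Mbar+B)}{B\log(a^2/\abar^2)}$. The right-hand side exceeds the constant $\kappa\triangleq\log(a^2)/\log(a^2/\abar^2)\in(0,1)$, so it suffices to force $\hat s\ge\sigma_0\triangleq\log\kappa/\log(\abar^2/c^2)$, a number independent of $B$. Since the kink-value-minus-$B$ evaluated at the fixed point $\sigma_0$ tends to $-\infty$ as $B\to\infty$ (its $B$-coefficient $(\abar^2/c^2)^{\sigma_0}-1$ is negative), there is a finite $B^*>B_c$ past which $\hat s>\sigma_0$, and monotonicity in $B$ propagates this to all $B>B^*$.

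With the bad configuration excluded, I would finish by stitching the two regimes. On $[0,s^*]$ one has $H=\psi_1$, so $\{H>0\}\cap[0,s^*]=(s_A,\infty)\cap[0,s^*]$; on $[s^*,\infty)$ one has $H=\psi_2=N-B$, whose positive set is an up-set of $[s^*,\infty)$ by the previous step (either $N(s^*)\le B$, handled by convexity, or $N(s^*)>B$ with $N$ increasing on $[s^*,\infty)$). A short check of the two cases $s_A<s^*$ and $s_A\ge s^*$ — using that $\psi_1(s^*)=N(s^*)-B$ shares the sign of $N(s^*)-B$ — shows the union is always a single ray, giving~\eqref{eq:Hprop}. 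Finally, $B_c$ is precisely the threshold making $\psi_1'(0)<0$ and the kink value start decreasing, i.e.\ the value below which the monotonicity property can genuinely fail, which is why the statement requires $B^*>B_c$.
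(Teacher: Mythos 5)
Your proof is correct, and while it arrives at the same structural skeleton as the paper's argument --- the decomposition $H=\min\{f_1,f_2\}$ with the kink at $\sst(y)$ where $c^{2s}y=B$, the identification of the single obstruction to \eqref{eq:Hprop} as the configuration where $f_2(\sst(y),y)>0$ yet $f_2(\cdot,y)$ is still decreasing at $\sst(y)$ and dips below zero afterwards (the paper's Case-IV), and the use of $B>B^*$ precisely to exclude it --- the way you exclude that configuration is genuinely different. The paper proves quasiconvexity of $f_1(\cdot,y)$ by a direct monotonicity argument (Lemma \ref{lem:f1-quasiconvex}), confines the bad configuration to $y\in[B,U]$ via the monotone function $W=\st-\sst$ (Lemma \ref{lem:W}), and kills it using quasiconvexity of $y\mapsto\Fst(y)$ together with concavity of $B\mapsto\Fst(U(B))$, which defines $B^*$ only implicitly as a zero of a concave function (Lemmas \ref{lem:Fst-quasiconvex} and \ref{lem:Bstar}). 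You instead (i) obtain the ray structure of $\{f_1>0\}$ from the Descartes-type rule for exponential sums (two sign changes in $(+,-,-,+)$, a zero at $s=0$, negative initial slope since $y>B_c$), and (ii) reparametrize the kink value $\Fst$ by the kink location, getting the \emph{convex} function $\Phi_B(s^*)=B((\abar^2/c^2)^{s^*}-1)+\Mbar(a^{2s^*}-1)$ with a unique positive zero $\hat s(B)$, and then force $\hat s(B)$ past an explicit $B$-independent threshold $\sigma_0$ so that $N(\sst)\ge B$ implies $N'(\sst)\ge0$. Your route buys an essentially closed-form (if possibly more conservative) $B^*$, whereas the paper's exact threshold must be located numerically; the paper's route avoids invoking the generalized Descartes rule. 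Two spots deserve tightening: the opening claim that it ``suffices'' for $\{\psi_1>0\}$ and $\{\psi_2>0\}$ each to be up-sets is not what you actually prove ($\{\psi_2>0\}$ need not be an up-set on all of $[0,\infty)$ when $y$ is large); it is your later stitching argument on $[0,s^*]\cup[s^*,\infty)$ that carries the proof, and it is sound. Also, reducing ``$N'(s^*)\ge0$ for every $s^*\ge\hat s$'' to the single inequality at $\hat s$ needs one more line: it works because $(\abar^2/c^2)^{s^*}$ is decreasing in $s^*$ while the substitution $\Mbar a^{2s^*}\ge B+\Mbar-B(\abar^2/c^2)^{s^*}$ only improves for $s^*\ge\hat s$.
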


Proposition~\ref{prop:H} states that, given the plant state is $y$ at
any reception time $S_j$, then there is a time $s_0$ such that, in the
absence of receptions, the plant state is expected to satisfy the
performance criterion~\eqref{eq:objective} until $S_j + s_0$ and
violate it on every time step thereafter. 

The proof of Proposition~\ref{prop:H} requires a number of
intermediate results that we detail next.  We start by introducing the
functions $f_1,f_2: \realnonneg^2 \rightarrow \real$,
\begin{subequations}
  \label{eq:f1f2}
  \begin{align}
    f_1(s,y) &\triangleq \abar^{2s} y + \Mbar ( a^{2s} - 1 ) - c^{2s}
    y ,
    \label{eq:f1}
    \\
    f_2(s,y) &\triangleq \abar^{2s} y + \Mbar ( a^{2s} - 1 ) - B
    . \label{eq:f2}
  \end{align}
\end{subequations}
Notice, from~\eqref{eq:H-form}, that $H(s,y) = \min \{ f_1(s,y),
f_2(s,y) \}$.
\begin{figure*}[htb!]
  \centering
  \subfigure[Case-I\label{fig:H-case1}]{\includegraphics[width=0.24\textwidth]{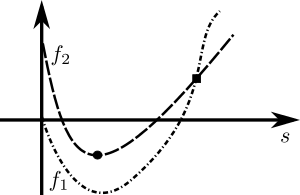}}
  \subfigure[Case-II\label{fig:H-case2}]{\includegraphics[width=0.24\textwidth]{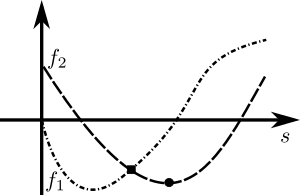}}
  \subfigure[Case-III\label{fig:H-case3}]{\includegraphics[width=0.24\textwidth]{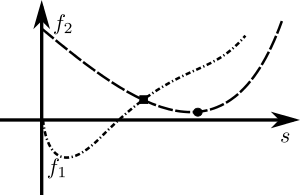}}
  \subfigure[Case-IV\label{fig:H-case4}]{\includegraphics[width=0.24\textwidth]{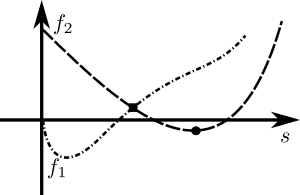}}
  \caption{For $y > B$, there are four possible cases of how
    $f_1(.,y)$ and $f_2(.,y)$ and hence $H(.,y)$ evolve. In the
    figures, $\CIRCLE$ and $\blacksquare$ show the points $(\st, \Ft)$
    and $(\sst, \Fst)$, respectively. In Case-I $s_*(y) < s_{**}(y)$
    and in Cases II-IV $s_*(y) \geq s_{**}(y)$. In addition, in
    Case-II $F_{**}(y) \leq 0$, in Case-III $F_{**}(y) > 0$ and
    $F_*(y) > 0$ and in Case-IV $F_{**}(y) > 0$ and $F_*(y) \leq
    0$.}\label{fig:H-cases}
\end{figure*}
Our proof strategy to establish Proposition~\ref{prop:H} is the
following:
\begin{quote}
  {\it Roadmap:} We first show that $f_2(.,y)$ is strongly convex and
  $f_1(.,y)$ is quasiconvex. Notice that for $y \leq B$, $H(s,y) =
  f_2(s,y)$ for all $s \geq 0$. Thus for $y > B$, we analyze the
  conditions under which one or the other of the functions $f_1(.,y)$
  and $f_2(.,y)$ is the minimum of the two. In this process, we find
  it useful to analyze the relationship between $\st$ and $\sst$, the
  unique point where $f_1(.,y)$ attains its minimum and the unique
  point where $f_1(.,y)$ equals $f_2(.,y)$, respectively. In addition,
  the function values at these points
  \begin{subequations}
    \begin{align}
      \Ft(y) &\triangleq f_2(\st(y), y) \label{eq:Ft}
      \\
      \Fst(y) &\triangleq f_1( \sst(y), y) = f_2( \sst(y), y
      ), \label{eq:Fst}
    \end{align}
  \end{subequations}
  also play an important role. Based on the relationship between
  $(\st, \Ft)$ and $(\sst, \Fst)$, the behavior of the open-loop
  performance function, for $y > B$, can be qualitatively classified
  into four different cases, which are illustrated in
  Figure~\ref{fig:H-cases}. Notice from the plots that $H$ has the
  property~\eqref{eq:Hprop} in all but Case-IV. Thus, the key to the
  proof is in showing that Case-IV does not occur under the hypothesis
  of Proposition~\ref{prop:H}.
\end{quote}

In the sequel, we discuss the various claims alluded to in the above
roadmap.

\begin{lemma}\longthmtitle{Convexity properties
    of~$f_2$}\label{lem:f2-strong-convex} 
  For any fixed $y \in \realnonneg$, the function $f_2( ., y )$ is
  strongly convex.
\end{lemma}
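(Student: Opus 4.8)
The plan is to establish strong convexity directly from the second-order characterization: I would show that the second derivative of $f_2(\cdot,y)$ with respect to $s$ is bounded below by a strictly positive constant, uniformly in $s$ over the domain $\realnonneg$, which is exactly the definition of strong convexity.

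First I would differentiate $f_2$ twice in $s$. Writing $\abar^{2s} = \e^{s \log(\abar^2)}$ and $a^{2s} = \e^{s \log(a^2)}$ and differentiating the expression~\eqref{eq:f2} (the constant term $-\Mbar - B$ drops out) yields
\begin{equation*}
  \frac{\partial^2 f_2}{\partial s^2}(s,y) = y \, (\log \abar^2)^2 \, \abar^{2s} + \Mbar \, (\log a^2)^2 \, a^{2s} .
\end{equation*}
Next I would bound each summand from below. The first term is nonnegative, since $y \in \realnonneg$, $(\log\abar^2)^2 \ge 0$, and $\abar^{2s} > 0$. For the second term, the standing assumption $|a| > 1$ gives $a^2 - 1 > 0$, so together with $M > 0$ we have $\Mbar = M/(a^2-1) > 0$; moreover $a^2 > 1$ forces $\log(a^2) > 0$, hence $(\log a^2)^2 > 0$, and $a^{2s} \ge 1$ for every $s \ge 0$. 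Combining these observations gives
\begin{equation*}
  \frac{\partial^2 f_2}{\partial s^2}(s,y) \ge \Mbar \, (\log a^2)^2 > 0 , \quad \forall s \in \realnonneg ,
\end{equation*}
and since the right-hand side is a positive constant independent of $s$ and $y$, this establishes strong convexity of $f_2(\cdot,y)$ with modulus $m = \Mbar (\log a^2)^2$.

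The one point requiring care, rather than a genuine obstacle, is that strong convexity demands a \emph{uniform} positive lower bound on the second derivative, and this relies on restricting to the domain $s \ge 0$, where $a^{2s} \ge 1$. Without this restriction the bound degrades: in the degenerate case $y = 0$, as $s \to -\infty$ one has $a^{2s} \to 0$ and the second derivative would tend to $0$, giving only ordinary convexity. Thus I would be explicit that the argument uses $s \in \realnonneg$ (consistent with the domain declared for $f_2$) and the sign facts $\Mbar > 0$, $a^2 > 1$ supplied by the standing assumptions.
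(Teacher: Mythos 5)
Your proposal is correct and follows essentially the same route as the paper's proof: compute $\partial^2 f_2/\partial s^2 = y\,\log^2(\abar^2)\,\abar^{2s} + \Mbar\,\log^2(a^2)\,a^{2s}$ and bound it below by the positive constant $\Mbar\log^2(a^2)$ using $a^{2s}\ge 1$ on $s\ge 0$. Your added remarks on the uniformity of the modulus and the necessity of the domain restriction are a welcome, if minor, elaboration of the same argument.
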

\begin{proof}
  Strong convexity of $f_2$ with respect to $s$ for a fixed $y$
  follows directly by taking the second derivative.
  \begin{equation*}
    \frac{ \partial^2 f_2 }{ \partial s^2 } = \abar^{2s}
    \log^2(\abar^2) y + \Mbar a^{2s} \log^2(a^2) > \Mbar \log^2(a^2) > 0 .
  \end{equation*}
\end{proof}

On the other hand, $f_1(.,y)$ for any fixed $y \in \realnonneg$ is
only quasiconvex in general, as the following result states.

\begin{lemma}\longthmtitle{Convexity properties
    of~$f_1$}\label{lem:f1-quasiconvex} 
  For any fixed $y \in \realnonneg$, the function $f_1(. , y)$ is
  quasiconvex.
\end{lemma}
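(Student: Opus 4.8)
The plan is to prove quasiconvexity of $f_1(\cdot,y)$ by showing that every stationary point of this function is a strict local minimum. For a twice-differentiable function on an interval, this property precludes interior local maxima, so its derivative can change sign at most once (necessarily from negative to positive); the function is therefore unimodal, and hence quasiconvex. The degenerate case $y=0$ is immediate, since $f_1(s,0)=\Mbar(a^{2s}-1)$ is strictly convex ($a^2>1$, $\Mbar>0$), so I would assume $y>0$ throughout.

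The main step is a sign analysis at the stationary points. I would compute
\begin{equation*}
  \frac{\partial f_1}{\partial s} = \abar^{2s}\log(\abar^2)\,y + \Mbar a^{2s}\log(a^2) - c^{2s}\log(c^2)\,y ,
\end{equation*}
together with the analogous second derivative obtained by squaring each logarithmic coefficient. The key observation is that forming the combination $\frac{\partial^2 f_1}{\partial s^2} - \log(c^2)\frac{\partial f_1}{\partial s}$ annihilates the $c^{2s}$ term exactly, leaving
\begin{align*}
  \frac{\partial^2 f_1}{\partial s^2} - \log(c^2)\frac{\partial f_1}{\partial s} &= \abar^{2s}\log(\abar^2)\big(\log(\abar^2)-\log(c^2)\big)\,y \\
  &\quad + \Mbar a^{2s}\log(a^2)\big(\log(a^2)-\log(c^2)\big) .
\end{align*}
At a stationary point $\frac{\partial f_1}{\partial s}=0$, so the left-hand side reduces to $\frac{\partial^2 f_1}{\partial s^2}$ itself. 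Invoking the standing ordering $\abar^2 < c^2 < 1 < a^2$, the first summand is a product of $y\abar^{2s}>0$ with $\log(\abar^2)<0$ and $\log(\abar^2)-\log(c^2)<0$, hence positive; the second is a product of $\Mbar a^{2s}>0$ with $\log(a^2)>0$ and $\log(a^2)-\log(c^2)>0$, hence positive. Therefore $\frac{\partial^2 f_1}{\partial s^2}>0$ at every stationary point.

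It then remains to assemble the conclusion. Since $\frac{\partial f_1}{\partial s}$ is continuous and, by the positivity just established, strictly increasing through each of its zeros, a $+\to-$ crossing between two $-\to+$ crossings is impossible, as it would force an additional zero with nonpositive second derivative. Hence $\frac{\partial f_1}{\partial s}$ vanishes at most once and changes sign only from negative to positive, so $f_1(\cdot,y)$ is nonincreasing and then nondecreasing on $\realnonneg$, which yields quasiconvexity. The main obstacle is purely the sign bookkeeping: $f_1$ is a sum of three exponentials with mixed-sign logarithmic coefficients, so a naive second-derivative computation is inconclusive; the crux is spotting the multiplier $\log(c^2)$ that cancels the offending $c^{2s}$ term and reduces the problem to the two-term expression whose sign is fixed by $\abar^2 < c^2 < a^2$.
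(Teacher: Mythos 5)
Your proof is correct, and it is a near relative of the paper's argument executed through a different mechanism. The paper divides $g_1'(s)=\partial f_1/\partial s$ by the positive weight $\abar^{2s}$ and checks that the quotient $g_2(s)= y\log(\abar^2)+\Mbar(a/\abar)^{2s}\log(a^2)-(c/\abar)^{2s}y\log(c^2)$ is strictly increasing (both non-constant terms are increasing with nonnegative coefficients under $\abar^2<c^2<1<a^2$); since $g_1'$ has the sign of a strictly increasing function, it changes sign at most once, from negative to positive. You instead cancel the $c^{2s}$ term via the combination $\partial^2 f_1/\partial s^2-\log(c^2)\,\partial f_1/\partial s$ and run a local second-derivative test at stationary points, which then requires the additional intermediate-value step to exclude multiple crossings. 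Both proofs rest on the same cancellation idea --- a positive multiplier chosen by the ordering of the three bases kills one exponential --- but the paper's global sign-equivalence delivers unimodality in one stroke, whereas your local test needs the extra topological argument (which you supply correctly). It is worth noting that your identity is stronger than the use you make of it: the right-hand side of your combination is strictly positive for \emph{all} $s$ when $y>0$, not only at stationary points, which says exactly that $c^{-2s}f_1'(s)$ is strictly increasing; this is the paper's argument verbatim with $c^{2s}$ playing the role of $\abar^{2s}$, and adopting it would let you drop the crossing-count argument entirely. Your separate treatment of $y=0$ via strict convexity of $\Mbar(a^{2s}-1)$ closes the one degenerate case cleanly.
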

\begin{proof}
  For any fixed $y \in \realnonneg$, let $g_1(s) \triangleq f_1(s,y)$. Then,
  \begin{equation*}
    g'_1(s) = \abar^{2s} y \log( \abar^2 ) + \Mbar a^{2s} \log( a^2 )
    - c^{2s} y \log( c^2 ) .
  \end{equation*}
  Notice that $g'_1(s)$ has the same sign as
  \begin{align*}
    g_2(s) &\triangleq \frac{ g'_1(s) }{ \abar^{2s} }
    \\
    &= y \log( \abar^2 ) + \Mbar \left( \frac{ a }{ \abar }
    \right)^{2s} \log( a^2 ) - \left( \frac{ c }{ \abar } \right)^{2s}
    y \log( c^2 ) ,
  \end{align*}
  which, by the standing assumptions, is a strictly increasing
  function of $s$. Since $g'_1(s)$ has the same sign as $g_2(s)$, we
  conclude that $g_1 = f_1(.,y)$ is quasiconvex.
\end{proof}

The strong convexity of $f_2(.,y)$ and quasiconvexity of $f_1(.,y)$
are very useful in proving Proposition~\ref{prop:H}. In order to
proceed with the proof, we need to determine the subsets of the domain
where the minimum in the definition of $H$ is achieved by each of the
functions $f_1$ and $f_2$. Thus, we define the function
\begin{equation}
  \label{eq:sst}
  \sst(y) \triangleq \frac{ \log(y) - \log(B) }{ \log(1/c^2) } ,
\end{equation}
that corresponds to the point where $f_1$ and $f_2$ cross each other,
i.e., $H(\sst(y),y) = f_1(\sst(y),y) = f_2(\sst(y),y)$.

\begin{lemma}\longthmtitle{Convexity properties
    of~$H$}\label{lem:H-intervals}
  Given any $y \in \realnonneg$, $H(.,y)$ is quasiconvex on $[0,
  \sst(y)]$ and strongly convex on $[\sst(y), \infty)$.
\end{lemma}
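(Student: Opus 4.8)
The plan is to exploit the representation $H(s,y) = \min\{f_1(s,y), f_2(s,y)\}$ recorded right after~\eqref{eq:f1f2} and to pin down exactly where the two branches cross. First I would subtract the two definitions in~\eqref{eq:f1f2}: since $f_1$ and $f_2$ agree in every term except the last, one gets $f_2(s,y) - f_1(s,y) = c^{2s} y - B$. Because $c \in (0,1)$, the map $s \mapsto c^{2s} y$ is strictly decreasing, so an elementary sign computation yields $c^{2s} y \ge B \iff s \le \sst(y)$, with $\sst(y)$ the crossover point defined in~\eqref{eq:sst}. Hence $f_1(\cdot,y) \le f_2(\cdot,y)$ precisely on $[0,\sst(y)]$ and $f_2(\cdot,y) \le f_1(\cdot,y)$ precisely on $[\sst(y),\infty)$, so the minimum collapses to a single branch on each subinterval: $H(\cdot,y) = f_1(\cdot,y)$ on $[0,\sst(y)]$ and $H(\cdot,y) = f_2(\cdot,y)$ on $[\sst(y),\infty)$.

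With this identification in hand, the conclusion follows by invoking the convexity facts already established. By Lemma~\ref{lem:f1-quasiconvex}, $f_1(\cdot,y)$ is quasiconvex on all of $[0,\infty)$, and its restriction to the subinterval $[0,\sst(y)]$ is again quasiconvex, since quasiconvexity is inherited on any convex subset of the domain. By Lemma~\ref{lem:f2-strong-convex}, $f_2(\cdot,y)$ is strongly convex with a uniform lower bound on its second derivative, and that same bound holds on $[\sst(y),\infty)$, so the restriction is strongly convex with the same modulus. Combining the two gives both assertions of the statement.

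The only points requiring care are the degenerate regime and the endpoint bookkeeping, and I expect no genuine obstacle there. When $y \le B$ one has $\sst(y) \le 0$, so $[0,\sst(y)]$ is empty (or a single point) and the quasiconvexity claim is vacuous, while $H(\cdot,y) = f_2(\cdot,y)$ on all of $[0,\infty)$ is strongly convex, consistent with the roadmap observation that $H = f_2$ whenever $y \le B$; when $y > B$ one has $\sst(y) > 0$ and the split is genuine. The essentially only substantive step is the sign computation identifying $\sst(y)$ as the crossover; after that the result reduces to the two previously proved lemmas together with the elementary fact that quasiconvexity and strong convexity pass to convex subsets of the domain.
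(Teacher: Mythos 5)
Your proposal is correct and follows essentially the same route as the paper: identify via the sign of $f_2(s,y)-f_1(s,y)=c^{2s}y-B$ that $H(\cdot,y)=f_1(\cdot,y)$ on $[0,\sst(y)]$ and $H(\cdot,y)=f_2(\cdot,y)$ on $[\sst(y),\infty)$, then invoke Lemma~\ref{lem:f1-quasiconvex} and Lemma~\ref{lem:f2-strong-convex} on the respective subintervals. The only difference is that you spell out the crossover computation and the degenerate case $y\le B$ explicitly, which the paper leaves implicit.
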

\begin{proof}
  The result follows directly from the definition~\eqref{eq:sst} of
  $\sst(y)$, the facts that
  \begin{align*}
    H(s,y) &= f_1(s,y) < f_2(s,y), \quad \forall s \in [0, \sst(y)) ,
    \\
    H(s,y) &= f_2(s,y) < f_1(s,y), \quad \forall s \in (\sst(y),
    \infty) ,
  \end{align*}
  together with the quasiconvexity of $f_1(.,y)$, cf.
  Lemma~\ref{lem:f1-quasiconvex}, and the strong convexity of
  $f_2(.,y)$, cf. Lemma~\ref{lem:f2-strong-convex}.
\end{proof}


Note that, by itself, this result is not sufficient to ascertain the
convexity properties of $H(.,y)$ over the whole domain
$\realnonneg$. However, if $f_2(.,y)$ is increasing for $s > \sst(y)$,
then Lemma~\ref{lem:H-intervals} would imply that $H(.,y)$ is
quasiconvex on $\realnonneg$, and this together with the fact that
$H(0,y) = 0$, in turn imply that the property~\eqref{eq:Hprop} holds.
Thus, our next objective is to find the values of $y$ for which
$f_2(.,y)$ is increasing for $s > \sst(y)$. To this aim, we find the
minimizer of this function as
\begin{equation}
  \label{eq:st}
  \st(y) \triangleq \log \left( \frac{ y \log \left( \frac{ 1 }{ \abar^2
        } \right) }{ \Mbar \log(a^2) } \right) \frac{ 1 }{ \log \left(
      \frac{ a^2 }{ \abar^2 } \right) } .
\end{equation}
Clearly, if $\st(y) \leq \sst(y)$, then $f_2(.,y)$ would be increasing
for $s > \sst(y)$, as desired.  Therefore, we are interested in the
function
\begin{equation}
  \label{eq:W}
  W(y) \triangleq \st(y) - \sst(y) ,
\end{equation}
and, more specifically, on the sign of $W$ as a function of $y$.

\begin{lemma}\longthmtitle{Monotonic behavior of~$W$} \label{lem:W}
  The function $W$ is monotonically decreasing on $[B, \infty)$ and
  $W(U) = 0$, where $U$ is given by
  \begin{equation}
    \label{eq:U}
    \log(U) \triangleq \frac{ \log \left( \frac{ B \log \left(
            \frac{ 1 }{ \abar^2 } \right) }{ \Mbar \log(a^2) } \right)
      \log \left( \frac{ 1 }{ c^2 } \right) }{ \log \left( \frac{ a^2
          c^2 }{ \abar^2 } \right) } + \log(B) .
  \end{equation}
\end{lemma}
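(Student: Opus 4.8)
The plan is to rewrite $W$ as a function of the single variable $\ell = \log y$. By the definitions~\eqref{eq:st} and~\eqref{eq:sst}, both $\st(y)$ and $\sst(y)$ are affine in $\ell$, so $W(y) = \st(y) - \sst(y)$ is affine in $\ell$ as well. Concretely, substituting and separating the $\ell$-dependent part from the constant part gives
\[
  W(y) = \left( \frac{1}{\log(a^2/\abar^2)} - \frac{1}{\log(1/c^2)} \right) \ell + C ,
\]
where $C$ collects the two constant terms $\log\!\big(\log(1/\abar^2)/(\Mbar\log(a^2))\big)/\log(a^2/\abar^2)$ and $\log(B)/\log(1/c^2)$. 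Both claims then follow by reading off the slope and the unique root of this affine map.

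For monotonicity, since $\ell = \log y$ is strictly increasing in $y$, it suffices that the bracketed slope be negative, i.e., that $\log(a^2/\abar^2) > \log(1/c^2)$. This is immediate from the standing assumptions: $\log(a^2/\abar^2) = \log(a^2) + \log(1/\abar^2) > \log(1/\abar^2) > \log(1/c^2)$, where the first inequality uses $a^2 > 1$ and the second uses $\abar^2 < c^2$. Hence $W$ is strictly decreasing on $[B,\infty)$, and in fact on all of $(0,\infty)$.

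For the root, I would solve $\st(y) = \sst(y)$, a linear equation in $\ell$. Clearing the denominators $\log(a^2/\abar^2)$ and $\log(1/c^2)$ and collecting $\ell$ produces the coefficient $\log(a^2/\abar^2) - \log(1/c^2)$, which I would rewrite as $\log(a^2 c^2/\abar^2)$; solving then yields a closed form for $\ell = \log y$. The final task is to verify that this value coincides with the stated expression~\eqref{eq:U} for $\log U$: after pulling $\log B$ out of the numerator one recovers exactly the form in~\eqref{eq:U}. Since $W$ is strictly monotone, this root is unique, so the single equation $W(U) = 0$ characterizes $U$ completely.

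The computations are elementary, so the only real obstacle is the algebraic bookkeeping in the last step: correctly combining the logarithmic constants over the common denominator and recognizing, through the identity $\log(a^2/\abar^2) - \log(1/c^2) = \log(a^2 c^2/\abar^2)$, that the expression collapses to~\eqref{eq:U}. Along the way I would confirm that all the relevant logarithms $\log(a^2)$, $\log(1/\abar^2)$, $\log(1/c^2)$, and $\log(a^2 c^2/\abar^2)$ are positive under the standing assumptions $\abar^2 < c^2 < 1 < a^2$, which guarantees both that $\st$ and $\sst$ are well defined and that $U$ is meaningful.
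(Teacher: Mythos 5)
Your proposal is correct and follows essentially the same route as the paper: the paper likewise reads off the sign of $W'(y) = \bigl( \tfrac{1}{\log(a^2/\abar^2)} - \tfrac{1}{\log(1/c^2)} \bigr)\tfrac{1}{y}$ from the standing assumptions $a^2>1$ and $\abar^2<c^2$, and obtains $U$ by solving $W(U)=0$. Your reparametrization in $\ell=\log y$ is only a cosmetic difference, and your algebra for the root checks out against~\eqref{eq:U}.
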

\begin{proof}
  From~\eqref{eq:sst} and \eqref{eq:st}, we see that
  \begin{align*}
    W'(y) &= \left( \frac{ 1 }{ \log \left( \frac{ a^2 }{ \abar^2 }
        \right) } - \frac{ 1 }{ \log \left( \frac{ 1 }{ c^2 } \right)
      } \right) \frac{ 1 }{ y } = \frac{ - \log \left( \frac{ a^2 c^2
        }{ \abar^2 } \right) }{ \log \left( \frac{ a^2 }{ \abar^2 }
      \right) \log \left( \frac{ 1 }{ c^2 } \right) y } < 0 ,
  \end{align*}
  where the last inequality follows from the fact that $a^2 > 1$ and
  $c^2 > \abar^2$ and the fact that $y \in [B, \infty)$. Thus, $W$ is
  monotonically decreasing for $y \in [B, \infty)$. The value of $U$
  can be obtained directly by solving $W(U) = 0$.
\end{proof}


It is clear that if $\Ft(y)$, the minimum value of $f_2(.,y)$,
(see~\eqref{eq:Ft}) is greater than zero then again
property~\eqref{eq:Hprop} is satisfied. Thus, we now note how $\Ft(y)$
evolves with $y$.

\begin{lemma}\longthmtitle{Motonic behavior
    of~$\Ft$}\label{lem:Ft-increasing} 
  The function $\Ft$ is monotonically increasing on $\realpos$.
\end{lemma}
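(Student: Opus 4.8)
The plan is to prove this via an envelope-theorem argument, which avoids substituting the explicit (and unwieldy) expression for $\st(y)$ into $f_2$. The key structural fact is that $\st(y)$, defined in~\eqref{eq:st}, is by construction the \emph{unique} minimizer of $f_2(\cdot,y)$ for each fixed $y$: this is guaranteed by the strong convexity of $f_2(\cdot,y)$ established in Lemma~\ref{lem:f2-strong-convex}. Consequently $\st(y)$ satisfies the first-order stationarity condition $\frac{\partial f_2}{\partial s}(\st(y),y) = 0$, and this is precisely what makes the derivative of $\Ft$ collapse to a simple expression.

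Concretely, I would differentiate $\Ft(y) = f_2(\st(y),y)$ (cf.~\eqref{eq:Ft}) with respect to $y$ using the chain rule:
\begin{equation*}
  \Ft'(y) = \frac{\partial f_2}{\partial s}(\st(y),y)\, \st'(y) + \frac{\partial f_2}{\partial y}(\st(y),y) .
\end{equation*}
The first term vanishes because $\st(y)$ is a stationary point of $f_2(\cdot,y)$, so no information about $\st'(y)$ is even needed. From the definition~\eqref{eq:f2} of $f_2$ we have $\frac{\partial f_2}{\partial y}(s,y) = \abar^{2s}$, hence $\Ft'(y) = \abar^{2\st(y)} > 0$ for all $y \in \realpos$. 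This directly yields that $\Ft$ is strictly increasing.

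There is no substantive obstacle in this argument; the only point requiring care is the justification that $\st(y)$ is genuinely the global minimizer (so that the first-order condition, and hence the cancellation, is valid), which is supplied by Lemma~\ref{lem:f2-strong-convex}, together with the smoothness of $f_2$ and of $y \mapsto \st(y)$ on $\realpos$ needed to apply the chain rule. An alternative would be to insert the closed form of $\st(y)$ from~\eqref{eq:st} into $f_2$ and differentiate the resulting one-variable expression in $y$, but this forces one through the algebra of logarithms and exponentials of $\abar^2, a^2, c^2$ and is far messier; the envelope argument is cleaner and makes the positivity of $\Ft'$ transparent.
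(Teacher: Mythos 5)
Your envelope argument is correct and is essentially the paper's proof in streamlined form: both establish monotonicity by computing $\Ft'(y)$ and observing that it is positive, with the only requirement being the stationarity of $\st(y)$ (which holds by the very definition~\eqref{eq:st}, independently of whether $\st(y)\ge 0$). The paper instead differentiates the composite directly and reports $\Ft'(y) = \bigl( \abar^{2\st(y)} + \Mbar a^{2\st(y)}/y \bigr)\,\log(a^2)/\log(a^2/\abar^2)$, which collapses exactly to your $\abar^{2\st(y)}$ once the first-order condition $\abar^{2\st(y)}\log(\abar^2)\,y + \Mbar a^{2\st(y)}\log(a^2)=0$ is substituted, so the two expressions agree and your route simply makes the positivity transparent.
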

\begin{proof}
  It can be easily verified that
  \begin{equation*}
    \Ft'(y) = \left( \abar^{2 \st(y)} + \frac{ \Mbar a^{2 \st(y)} }{ y
      } \right) \frac{ \log(a^2) }{ \log \left( \frac{ a^2 }{ \abar^2 }
      \right) } > 0 ,
  \end{equation*}
  which proves the result.
\end{proof}


Now, also note that, if $H(\sst(y), y) \leq 0$, then strong convexity
of $H(., y)$ in the interval $[\sst(y), \infty)$ guarantees the
property~\eqref{eq:Hprop}. Thus, we now analyze the evolution of the
function $\Fst(y)$ (see~\eqref{eq:Fst}) with $y$.

\begin{lemma}\longthmtitle{Convexity properties
    of~$\Fst$}\label{lem:Fst-quasiconvex} 
  The function $\Fst$ is quasiconvex on $\realnonneg$.
\end{lemma}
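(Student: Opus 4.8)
The plan is to exploit the crossing condition that defines $\sst$ to obtain an explicit closed form for $\Fst$, and then to reparametrize the domain so that the exponents become linear, at which point convexity (hence quasiconvexity) is immediate.

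First I would record the key algebraic consequence of the definition~\eqref{eq:sst}: at $s=\sst(y)$ one has $c^{2\sst(y)}y=B$, which is precisely why $f_1$ and $f_2$ agree there. Combining this with~\eqref{eq:Fst} and~\eqref{eq:f2}, I would write $\Fst(y)=f_2(\sst(y),y)=\abar^{2\sst(y)}y+\Mbar\bigl(a^{2\sst(y)}-1\bigr)-B$.

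Second, rather than differentiate this directly in $y$ (which forces a delicate sign analysis balancing a $1/y$ factor against the growth of $(a/\abar)^{2\sst(y)}$), I would change the independent variable to $s=\sst(y)$. By~\eqref{eq:sst} this is a strictly increasing continuous bijection from $\realpos$ onto $\real$, with explicit inverse $y=Bc^{-2s}$; substituting yields
\[
  \widetilde F(s)\triangleq\Fst(Bc^{-2s})=B\Bigl(\frac{\abar^2}{c^2}\Bigr)^{s}+\Mbar\,a^{2s}-\Mbar-B .
\]
Now both $s\mapsto(\abar^2/c^2)^s=\e^{s\log(\abar^2/c^2)}$ and $s\mapsto a^{2s}=\e^{s\log(a^2)}$ are exponentials of \emph{linear} functions of $s$ and hence convex, and the coefficients $B$ and $\Mbar$ are positive; therefore $\widetilde F$ is a convex function of $s$ (a fact that, notably, needs no hypothesis on $B$, unlike Proposition~\ref{prop:H}), and in particular quasiconvex.

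Third, I would transfer the conclusion back to the variable $y$. Since $y\mapsto\sst(y)$ is a strictly increasing continuous bijection, the preimage of any interval is an interval, so the sublevel sets of $\Fst=\widetilde F\circ\sst$ are intervals whenever those of $\widetilde F$ are; thus quasiconvexity of $\widetilde F$ on $\real$ gives quasiconvexity of $\Fst$ on $\realpos$, and the endpoint $y=0$ is handled by continuity (as $y\to0^+$ one has $\sst(y)\to-\infty$ and $\widetilde F\to+\infty$), extending the claim to all of $\realnonneg$. The only real subtlety, and the point I would be most careful to state explicitly, is the legitimacy of the reparametrization: quasiconvexity is preserved under a monotone change of the \emph{independent} variable but not of the dependent one, so I would justify the transfer through the interval-preimage argument rather than through any chain-rule or second-derivative computation in $y$.
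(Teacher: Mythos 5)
Your proof is correct, but it reaches the conclusion by a genuinely different route than the paper. The paper works directly in the variable $y$: it computes $\Fst'(y)$, observes that its sign equals that of $g(y)=\Fst'(y)/\abar^{2\sst(y)}$, and shows $g'>0$, so that $\Fst'$ changes sign at most once (from negative to positive), which gives quasiconvexity. You instead substitute $y=Bc^{-2s}$ to obtain $\widetilde F(s)=B(\abar^2/c^2)^s+\Mbar a^{2s}-\Mbar-B$, note that this is a positive combination of exponentials and hence \emph{convex} in $s$, and transfer quasiconvexity back through the strictly increasing bijection $\sst$ via the interval-preimage argument (which is indeed the right way to justify the transfer, since convexity itself is not preserved under such reparametrizations). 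The two arguments are close relatives --- the paper's claim that $g$ is increasing is essentially the convexity of $\widetilde F$ read through the chain rule, since $\Fst'(y)=\widetilde F'(\sst(y))\,\sst'(y)$ with $\sst'>0$ --- but yours buys a cleaner computation (no $1/y$-versus-growth sign balancing) and a slightly stronger structural statement ($\Fst$ is convex in the logarithmic variable), at the cost of having to argue the legitimacy of the change of variables, which you do correctly. Your treatment of the endpoint $y=0$ is as good as the paper's (whose derivative formula also degenerates there): since $\Fst(y)\to+\infty$ as $y\to 0^+$, the sublevel sets remain intervals, and in any case the lemma is only invoked on $[B,U]$ in the proof of Proposition~\ref{prop:H}.
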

\begin{proof}
  We can easily verify that
  \begin{equation*}
    \Fst'(y) = \left( \abar^{2 \sst(y)} \log \left( \frac{ \abar^2 }{
          c^2 } \right) + \frac{ \Mbar a^{2 \sst(y)} \log(a^2) }{ y
      } \right) \frac{ 1 }{ \log \left( \frac{ 1 }{ c^2 }
      \right) },
  \end{equation*}
  which has the same sign as the function $g(y) \triangleq \Fst'(y)/
  \abar^{2 \sst(y) }$. We can then verify, for all $y>0$,
  \begin{equation*}
    g'(y) = \frac{ \Mbar \left( \frac{ a^2 }{ \abar^2 }
      \right)^{\sst(y)} \log(a^2) \log \left( \frac{ a^2 c^2 }{ \abar^2
        } \right) }{ \log^2 \left( \frac{ 1 }{ c^2 } \right ) y^2 } >
    0.
  \end{equation*}
  Thus, $g$ is strictly increasing, and since $g(y)$ and $\Fst'(y)$
  have the same sign, $\Fst$ is quasiconvex.
\end{proof}

\begin{lemma}\longthmtitle{Choice of $B$} \label{lem:Bstar}
  There exists $B^*>0$ such that $\Fst(U(B^*)) = 0$ and, if $B > B^*$,
  then $\Fst(U(B)) < 0$.
\end{lemma}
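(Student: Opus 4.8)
The plan is to collapse $\Fst(U(B))$ into an elementary one-variable function of $B$ and then locate its rightmost zero. First I would use the defining property of $U$: by Lemma~\ref{lem:W}, $U=U(B)$ is the point where $W$ vanishes, i.e.\ $\st(U)=\sst(U)$ (cf.~\eqref{eq:W}). Since $\Ft(y)=f_2(\st(y),y)$ by~\eqref{eq:Ft}, while $\Fst(y)=f_2(\sst(y),y)$ by~\eqref{eq:Fst} (the crossing property of $\sst$), evaluating both at $y=U$ gives the key reduction $\Fst(U(B))=\Ft(U(B))=:\psi(B)$. It therefore suffices to analyze the sign of $\psi$.

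Next I would compute $\psi$ in closed form. Plugging the minimizer~\eqref{eq:st} into $f_2$ (cf.~\eqref{eq:f2}) and using the first-order condition $\abar^{2\st}y\,\log(1/\abar^2)=\Mbar a^{2\st}\log(a^2)$, one obtains $\Ft(y)=\Mbar(1+\alpha)(\beta y)^{\gamma}-\Mbar-B$, with $\alpha=\log(a^2)/\log(1/\abar^2)$, $\beta=1/(\alpha\Mbar)$ and $\gamma=\log(a^2)/\log(a^2/\abar^2)$. Substituting the explicit $U(B)$ from~\eqref{eq:U}, the exponents telescope: $\beta U=(\beta B)^{1+\delta}$ with $\delta=\log(1/c^2)/\log(a^2c^2/\abar^2)$, and $\gamma(1+\delta)=\eta:=\log(a^2)/\log(a^2c^2/\abar^2)$. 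This yields $\psi(B)=\Mbar(1+\alpha)(\beta B)^{\eta}-B-\Mbar$, and the standing assumptions $\abar^2<c^2<1<a^2$ force $\eta\in(0,1)$.

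I would then read off the shape of this elementary function. Its derivative $\psi'(B)=\Mbar(1+\alpha)\beta^\eta\eta\,B^{\eta-1}-1$ is strictly decreasing from $+\infty$ to $-1$, so $\psi$ rises to a single maximizer and then falls; moreover $\psi(0^+)=-\Mbar<0$ and $\psi(B)\to-\infty$ as $B\to\infty$ (since $\eta<1$). Thus the entire statement hinges on the one remaining fact: that $\psi$ is positive somewhere. I would check this at $B_c$ from~\eqref{eq:Bc}: writing $A=\log(a^2)$, $L=\log(1/\abar^2)$ and $t=\log(c^2/\abar^2)/L\in(0,1)$, a short computation gives $\beta B_c=1/t$ and hence $\psi(B_c)=\tfrac{\Mbar}{Lt}\bigl[(A+L)\,t^{1-\eta}-A-Lt\bigr]$.

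The main obstacle is precisely the positivity of this bracket. Setting $w=L/(A+L)$ and $R=(1-w)+wt=(A+Lt)/(A+L)$, one has $1-\eta=wt/R$, so the bracket is positive iff $t^{\,wt/R}>R$, which after taking logarithms is equivalent to $wt\log t\ge R\log R$. This last inequality is exactly Jensen's inequality for the strictly convex function $x\mapsto x\log x$ evaluated on the convex combination $R=(1-w)\cdot 1+w\cdot t$, and it is strict because $t\neq 1$; hence $\psi(B_c)>0$. Finally, combining $\psi(B_c)>0$ with the strict unimodality of $\psi$ and $\psi\to-\infty$, I would define $B^*$ as the zero of $\psi$ lying to the right of its maximizer, where $\psi$ is strictly decreasing from a positive value to $-\infty$. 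Then $\Fst(U(B^*))=\psi(B^*)=0$ and $\psi(B)<0$ for every $B>B^*$, and $B^*>B_c$ since $\psi(B_c)>0$ forces $B_c$ to lie strictly left of this rightmost zero, consistent with Proposition~\ref{prop:H}.
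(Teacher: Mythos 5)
Your proof is correct, and while it shares the paper's overall skeleton---reduce the problem to a single scalar function of $B$ and exploit its concavity/unimodality---the two key steps are genuinely different. The paper works directly with $\Fst(U(B)) = Y(B) - \Mbar - B$, computes $\drm^2 \Fst(U(B))/\drm B^2 < 0$ to get strict concavity, and then anchors the sign by observing that $B_0 = \e^{-P_4}$ (the point where $\sst(U(B_0))=0$, i.e., $U(B_0)=B_0$) is an exact zero of the function; $B^*$ is then taken to be the rightmost zero. You instead use $\st(U)=\sst(U)$ to identify $\Fst(U(B))$ with $\Ft(U(B))$, derive the explicit power-law form $\psi(B)=\Mbar(1+\alpha)(\beta B)^{\eta}-\Mbar-B$ with $\eta\in(0,1)$, and anchor the sign by proving $\psi(B_c)>0$ via Jensen's inequality for $x\mapsto x\log x$ on the convex combination $R=(1-w)+wt$. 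Your route is more computational up front but buys two things the paper's proof does not deliver directly: (i) it makes explicit that $\psi(B)\to-\infty$ as $B\to\infty$ (since $\eta<1$), which is actually needed to guarantee that the rightmost zero exists when the function is increasing at its first zero---strict concavity alone does not rule out a concave function that stays positive, so this closes a small gap in the paper's argument; and (ii) it yields $B^*>B_c$ as an immediate byproduct, which is exactly the form of the claim in Proposition~\ref{prop:H} and which the paper only recovers afterwards by setting $B^*=\max\{B_c,B_z\}$ in its numerical recipe. (Your computation also shows, incidentally, that the paper's anchor point satisfies $\beta B_0=1$, hence $\psi(B_0)=\Mbar\alpha-1/\beta=0$, so the two anchors are consistent: $B_0$ is the left zero and $B_c$ lies strictly between the two zeros.)
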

\begin{proof}
  We first make explicit the dependence of $U$ on $B$ by
  rewriting~\eqref{eq:U} as
  \begin{equation}
    \label{eq:UB}
    \log( U(B) ) = \frac{ P_1 }{ P_2 } \log(B) + \frac{ P_3 P_4 }{ P_2
    } ,
  \end{equation}
  where
  \begin{align*}
    &P_1 \triangleq \log \left( \frac{ a^2 }{ \abar^2 } \right), \quad
    P_2 \triangleq \log \left( \frac{ a^2 c^2 }{ \abar^2 } \right),
    \\
    &P_3 \triangleq \log \left( \frac{ 1 }{ c^2 } \right), \quad P_4
    \triangleq \log \left( \frac{ \log \left( \frac{ 1 }{ \abar^2 }
        \right) }{ \Mbar \log(a^2) } \right) .
  \end{align*}
  Note that
  \begin{equation*}
    U(B) = e^{ \frac{ P_3 P_4 }{ P_2 } } \cdot B^{ \frac{ P_1 }{ P_2
      } }, \quad \frac{ \drm U }{ \drm B } = \frac{ P_1 U }{ P_2 B } .
  \end{equation*}
  Using the definitions of $P_1$, $P_2$, and $P_3$ in~\eqref{eq:sst},
  we obtain
  \begin{align*}
    \sst( U(B) ) = \frac{ \log(B) }{ P_2 } + \frac{ P_4 }{ P_2 } .
  \end{align*}
  Next, we use this expression to evaluate~\eqref{eq:Fst} and
  establish $\Fst( U(B) ) = Y(B) - \Mbar - B$, where
  \begin{align*}
    Y(B) \triangleq \abar^{2 \sst( U(B) )} U(B) + \Mbar a^{2 \sst(
      U(B) )}.
  \end{align*}
  One can then verify, using the definition of $P_1$ and $P_2$ to
  simplify the expressions, that
  \begin{align*}
    &\frac{ \drm \Fst( U(B) ) }{ \drm B } = \frac{ Y(B) \log(a^2) }{
      P_2 B } - 1 ,
    \\
    &\frac{ \drm^2 \Fst( U(B) ) }{ \drm B^2 } = - \frac{ Y(B)
      \log(a^2) \log \left( \frac{ c^2 }{ \abar^2 } \right) }{ P_2^2
      B^2 } < 0, \ \forall B > 0 .
  \end{align*}
  Thus, the function $\Fst(U(.))$ is a strictly concave function - it
  has at most two zeros and it is positive only between those zeros,
  if they exist. Now, note that for $B_0 = e^{-P_4}$, $\sst(U(B_0)) =
  0$ and hence $\Fst(U(B_0)) = 0$. Therefore, there exists a $B^* \geq
  B_0$ such that $\Fst(U(B^*)) = 0$ and by the strict concavity of
  $\Fst(U(.))$, $\Fst(U(B))$ is strictly decreasing for all $B \geq
  B^*$. This proves the result.
 \end{proof}

 The final arguments of the proof also suggest a method to numerically
 find $B^*$. First, note that $B_0 < B_c$. As a result, if
 $\Fst(U(.))$ is non-increasing at $B_0$ then $B^* = B_c$.  Otherwise,
 the other zero, $B_z$ of $\Fst(U(.))$ can be found by simply marching
 forward in $B$ from $B_0$. Then, $B^* = \max\{ B_c, B_z\}$. Now, all
 the pieces necessary for the proof of Proposition~\ref{prop:H} are
 finally in place.

\begin{proof}[Proof of Proposition~\ref{prop:H}]
  First, notice from the definition~\eqref{eq:sst} of $ \sst$ that if
  $y \leq B$, then $H(s,y) = f_2(s,y)$ for all $s \geq 0$. Then, the
  strong convexity of $f_2(.,y)$,
  cf. Lemma~\ref{lem:f2-strong-convex}, and the fact that $f_2(0,y)
  \leq 0$ for all $y \leq B$ are sufficient to prove
  Proposition~\ref{prop:H}. Therefore, in what follows, we assume that
  $y > B$.

  There are four possible cases that may arise, specified as
  \begin{align*}
    \text{Case-I:} \quad &s_*(y) < s_{**}(y) ,
    \\
    \text{Case-II:} \quad &s_*(y) \geq s_{**}(y) \land F_{**}(y) \leq
    0 ,
    \\
    \text{Case-III:} \quad &s_*(y) \geq s_{**}(y) \land F_{**}(y) > 0
    \land F_*(y) > 0 ,
    \\
    \text{Case-IV:} \quad &s_*(y) \geq s_{**}(y) \land F_{**}(y) > 0
    \land F_*(y) \leq 0 .
  \end{align*}
  Figure~\ref{fig:H-cases} illustrates each of these cases. First,
  note that for $y > B$, $H(0,y) = 0$. Also, recall from
  Lemma~\ref{lem:H-intervals} that $H(.,y)$ is quasiconvex for $s \in
  [0, \sst(y)]$ and thus in this interval, $H$ satisfies the
  property~\eqref{eq:Hprop}. It is only the behavior of $H(s,y)$ for
  $s \in [\sst(y), \infty)$ that is of concern to us.

  Thus in Case-I, since $\st(y) < \sst(y)$ and the strong convexity of
  $f_2(.,y)$, cf. Lemma~\ref{lem:f2-strong-convex}, mean that $H(.,y)$
  is strictly increasing in $[\sst(y), \infty)$, which is sufficient
  to prove property~\eqref{eq:Hprop}. In Case-II, $\Fst(y) \leq 0$ and
  again the strong convexity of $H(.,y)$ in $[\sst(y), \infty)$
  guarantees the result. In Case-III, the fact that $\Ft(y) > 0$
  directly guarantees property~\eqref{eq:Hprop}.

  It is only in Case IV when the property~\eqref{eq:Hprop} would be
  violated. So, now we take into account the assumption that $B >
  B^*$. Notice from~\eqref{eq:W} and Lemma~\ref{lem:W} that in Case
  IV, $y \in [B, U]$. Also notice that $\Fst(B) = 0$ and by
  Lemma~\ref{lem:Bstar} that $\Fst(U) < 0$. Then, the quasiconvexity
  of $\Fst$, cf.  Lemma~\ref{lem:Fst-quasiconvex}, implies that
  $\Fst(y) \leq 0$ for all $y \in [B, U]$, which ensures that Case IV
  does not occur. This completes the proof of
  Proposition~\ref{prop:H}.
\end{proof}

Observe that, in ruling out the occurrence of Case-IV we have also
ruled out the occurrence of Case-III. From Lemma~\ref{lem:Bstar}, we
see that the condition $B > B^*$ is only sufficient and it may seem
that the `good' Case-III has been ruled out inadvertently. However,
note that, by the definitions of $\Ft$ and $\Fst$,
$\Fst(y) \geq \Ft(y)$ for any $y > 0$. Thus, Case-IV is ruled out only
if both $\Fst(y)$ and $\Ft(y)$ are of the same sign for all
$y \in [B, U]$. Therefore, ruling out Case-IV automatically also rules
out Case-III.

\section{Convergence and performance analysis under the
  event-triggered policy}\label{sec:analysis}

In this section, we characterize the convergence and performance
properties of the system evolution operating under the event-triggered
transmission policy~$\ETP$ defined in~\eqref{eq:ET-design}.

\subsection{Convergence guarantees: the control objective is
  achieved}\label{sec:event-analysis}

The following statement is the main result of the paper and shows that
the control objective is achieved by the proposed event-triggered
transmission policy.

\begin{theorem}\longthmtitle{The event-triggered policy meets the control
    objective}\label{thm:main}
  If the ultimate bound satisfies $B > B^*$ and $D \in \integerspos$
  is such that $\Gc(D) < 0$, cf.~\eqref{eq:param-constraint}, then the
  event-triggered policy $\ETP$ guarantees that
  $\condexpect[\ETP]{ \pf_{k} }{ I_{\LRT{k}}^+ } \leq 0$ for all
  $k \in \integerspos$.
\end{theorem}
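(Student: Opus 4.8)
The plan is to fix an arbitrary $k \in \integerspos$, set $S_j = \LRT{k}$, $y = x_{S_j}^2$, and $m = k - S_j$, and condition on $I_{S_j}^+$. Since $\LRT{k} = S_j$ forces that no successful reception occurs strictly between $S_j$ and $k$, the controller estimate is never refreshed on $(S_j, k)$ and the plant evolves in open loop from $S_j$. By the very definition~\eqref{eq:H-def} of the open-loop performance evolution function and its closed form~\eqref{eq:H-form}, this identifies $\condexpect[\ETP]{h_k}{I_{\LRT{k}}^+}$ with $H(m, y)$. Thus the whole statement reduces to certifying nonpositivity of $H$ along every inter-reception interval, for which the single-crossing monotonicity of Proposition~\ref{prop:H} (valid since $B > B^*$) is the central tool.

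On the silent window $S_j < k \le T_j$, the policy~\eqref{eq:ET-design} transmits nothing and, by~\eqref{eq:Tj}, $\G{k}{D} < 0$. I would read $\G{k}{D}$ via~\eqref{eq:GkD} as a convex (geometric) combination, over $s \ge D$, of the open-loop pathwise performances $\condexpect{h_{k+s}}{I_k, \LRT{k+1} = k + s}$, each measured relative to $S_j$. Using the closed form of Lemma~\ref{lem:closed-form-G} (which exposes the $x_k, e_k$ dependence) together with the convexity Lemmas~\ref{lem:f2-strong-convex}--\ref{lem:H-intervals}, these summands inherit the single-crossing shape of $H$: nonpositive on an initial segment of $s$ and positive thereafter. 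A negative convex combination then forces the smallest-index summand to be nonpositive, and single-crossing propagates this backward to the current time, yielding $h_k \le 0$ pathwise and hence $H(m, y) \le 0$.

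At the next reception $S_{j+1}$ the governing object is the performance-evaluation function. Since $\Gc(D) < 0$, Proposition~\ref{prop:GkD-prop}(b) gives $\Gp{S_j}{D} < 0$, and part~(c) together with Lemma~\ref{cor:Gc} propagates this to $\Gp{S_j}{d} < 0$ for all $1 \le d \le D$. The one-step recursions of Proposition~\ref{prop:GkD-prop}(a), tying the look-ahead criterion across a silent step ($r_k = 0$) and across a reception ($r_k = 1$) to the performance-evaluation function, let me march the criterion through the random silent window and conclude $\condexpect[\ETP]{h_{S_{j+1}}}{I_{S_j}^+} \le 0$. Monotonicity of $H$ then transfers nonpositivity from the reception time to the intermediate times, and the tower property over successive reception times lifts the per-interval bounds to $\condexpect[\ETP]{h_k}{I_{\LRT{k}}^+} \le 0$ for every $k$.

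The main obstacle is the random stopping time $T_j = \min\{\ell > S_j : \G{\ell}{D} \ge 0\}$ of~\eqref{eq:Tj}: the idle horizon $T_j - S_j$ before the first transmission is state-dependent and therefore random, so one cannot simply substitute the fixed nominal horizon $D$ into $\Gp{S_j}{D}$. The crux is to show that the triggering instant is forced to occur no later than the zero-crossing of $H(\cdot, y)$ guaranteed by monotonicity, i.e. that $\G{\ell}{D} < 0$ certifies $H(\ell - S_j, y) \le 0$, so that the pathwise look-ahead criterion driving the policy and the averaged bound $\Gp{S_j}{d} < 0$ can be reconciled across the random inter-reception interval. The most delicate point is the transmitting sub-window $T_j < k < S_{j+1}$, where the open-loop performance may already have crossed zero and only the reception-averaged bound survives, so the argument there must genuinely rely on the expectation over the (geometric) reception time rather than on a pointwise estimate.
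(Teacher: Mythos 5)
Your overall skeleton matches the paper's proof: identify $\condexpect[\ETP]{\pf_k}{I_{S_j}^+}$ with $H(k-S_j,x_{S_j}^2)$, show $\condexpect[\ETP]{\pf_{S_{j+1}}}{I_{S_j}^+}\le 0$ using Proposition~\ref{prop:GkD-prop}, and propagate nonpositivity backward to intermediate times via the single-crossing property of Proposition~\ref{prop:H}, then chain over reception times. Two points, however, are genuine problems.

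First, your second paragraph is both unsupported and unnecessary. You claim the summands $\condexpect{\pf_{k+s}}{I_k,\LRT{k+1}=k+s}$ of $\G{k}{D}$ at a general (non-reception) time $k$ ``inherit the single-crossing shape of $H$'' and deduce $\pf_k\le 0$ \emph{pathwise}. Proposition~\ref{prop:H} establishes single-crossing only for $H(\cdot,y)$, i.e., for the open-loop evolution started at a reception time where the controller estimation error is zero; at a general $k$ the closed form in Lemma~\ref{lem:closed-form-G} carries cross terms in $x_k e_k$ and $e_k^2$, and no analogous monotonicity is proved anywhere in the paper. Moreover the pathwise conclusion $\pf_k\le 0$ is stronger than what is true or needed---process noise can push individual trajectories above the bound; the theorem only asserts nonpositivity of the expectation conditioned on $I_{S_j}^+$. (Also, $\G{k}{D}<0$ holds for $S_j<k<T_j$ but not at $k=T_j$, where $\G{T_j}{D}\ge 0$ by \eqref{eq:Tj}.) Your third paragraph supersedes this detour, so the fix is simply to delete it.

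Second, the step you yourself flag as ``the crux''---reconciling the random trigger time $T_j$ with the fixed-horizon bound $\Gp{S_j}{D}<0$---is exactly where the paper's proof does its real work, and your proposal leaves it open. The paper's resolution: since $\ETP$ transmits on every step from $T_j$ until $S_{j+1}$, it coincides there with the nominal policy $\Tc_{T_j}^0$, so the tower property gives $\condexpect[\ETP]{\pf_{S_{j+1}}}{I_{S_j}^+}=\condexpect[\ETP]{\G{T_j}{0}}{I_{S_j}^+}$. Unrolling the one-step recursions of Proposition~\ref{prop:GkD-prop}(a) backward through the silent window then terminates in one of two ways: if $T_j\le S_j+D$ it reaches $\Gp{S_j}{T_j-S_j}$, which is $\le\Gp{S_j}{D}<0$ by parts (b)--(c); if $T_j>S_j+D$ it stops after $D$ steps at $\condexpect[\ETP]{\G{T_j-D}{D}}{I_{S_j}^+}$, which is negative because $T_j-D$ lies strictly inside the silent window where, by definition of $T_j$, the trigger has not yet fired. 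Without this identity and case split (or an equivalent argument), your claim that one can ``march the criterion through the random silent window'' is an assertion rather than a proof, and the theorem is not established.
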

\begin{proof}
  We structure the proof around the following two claims.

  \emph{Claim (a):} For any $j \in \integerspos$,
  $\condexpect[\ETP]{ \pf_{S_{j+1}} }{ I_{S_j}^+ } \leq 0$ implies
  $\condexpect[\ETP]{ \pf_{k} }{ I_{S_j}^+ } \leq 0$ for all $k \in
  \intrangecc{S_j}{S_{j+1}}$.

  \emph{Claim (b):} For any $j \in \integerspos$, $\condexpect[\ETP]{
    \pf_{S_{j+1}} }{ I_{S_j}^+ } < 0$.


  Note that if both the claims hold, the result automatically follows.
  Therefore, it now suffices to establish claims (a) and (b). Towards
  this aim, first observe that
  \begin{equation*}
    \condexpect[\ETP]{ \pf_k }{ I_{S_j}^+ } =  \condexpect{
      \pf_k }{ I_{S_j}^+ }, \quad \forall k \in
    \intrangecc{S_j}{S_{j+1}} .
  \end{equation*}
  This can be reasoned by noting that a transmission policy only
  affects the sequence of reception times, $\{S_j\}_{j \in
    \integerspos}$, and has otherwise no effect on the evolution of
  the performance function $h_k$ during the inter-reception
  times. Hence, from the definition~\eqref{eq:H-def} of $H $, it
  follows that
  \begin{equation*}
    \condexpect[\ETP]{ \pf_k }{ I_{S_j}^+ } = H(k - S_j, x_{S_j}^2),
    \quad \forall k \in \intrangecc{S_j}{S_{j+1}} .
  \end{equation*}
  Consequently, Proposition~\ref{prop:H} implies claim (a).

  Next, we prove claim (b). From Proposition~\ref{prop:GkD-prop}(a),
  we see that for all $k \in \intrangeoo{ S_j }{ S_{j+1} }$,
  \begin{align}
    \condexpect[\ETP]{ \G{k+1}{D} }{ I_{S_j}^+ } &= \condexpect[\ETP]{
      \condexpect[\ETP]{ \G{k+1}{D} }{ I_k, r_k = 0 } }{ I_{S_j}^+ }
    \notag
    \\
    &= \condexpect[\ETP]{ \G{k}{D+1} }{ I_{S_j}^+ } , \label{eq:step1}
  \end{align}
  and
  \begin{align}
    \label{eq:step2}
    \condexpect[\ETP]{ \G{S_j+1}{D} }{ I_{S_j}^+ } &=
    \condexpect[\ETP]{ \G{S_j+1}{D} }{ I_{S_j}, r_{S_j} = 1 } =
    \Gp{S_j}{D+1} .
  \end{align}
  Then, under the policy $\ETP$, and using~\eqref{eq:Tj},
  \begin{align*}
    \condexpect[\ETP]{ \pf_{ S_{j+1} } }{ I_{S_j}^+ } 
    &=
      \condexpect[\ETP]{ \condexpect[\ETP]{ \pf_{ S_{j+1} } }{
      I_{T_j} } }{ I_{S_j}^+ }
    \\
    & = \condexpect[\ETP]{ \condexpect[\Tc_{T_j}^0]{
      \pf_{ S_{j+1}
      } }{ I_{T_j} } }{ I_{S_j}^+ }
    \\
    &= \condexpect[\ETP]{ G_{T_j}^0 }{ I_{S_j}^+ } ,
  \end{align*}
  where we have first used the `Tower property' of conditional
  expectation, then the definition of the event-triggered
  policy~\eqref{eq:ETP} and finally the definition of
  $\G{T_j}{0}$. Using~\eqref{eq:step1} and~\eqref{eq:step2}
  recursively, this expression reduces to
  \begin{align*}
    &\condexpect[\ETP]{ \pf_{ S_{j+1} } }{ I_{S_j}^+ }
    \\
    &=
    \begin{cases}
      \Gp{S_j}{T_j - S_j}, \quad &\text{if } T_j \leq S_j + D
      \\
      \condexpect[\ETP]{ G_{T_j - D}^{D} }{ I_{S_j}^+ }, \quad
      &\text{if } T_j > S_j + D .
    \end{cases}
  \end{align*}
  In the case when $T_j \leq S_j + D$, claims (b) and (c) of
  Proposition~\ref{prop:GkD-prop} imply that
  $\Gp{S_j}{T_j - S_j} < 0$. Also note that, under the policy $\ETP$,
  $G_{k}^{D} < 0$ for all $k \in \intrangeoo{ S_j }{ T_j }$. Thus, in
  the case when $T_j > S_j + D$, we have $G_{T_j - D}^{D} < 0$. Thus,
  we have shown that claim (b) is true, which completes the proof.
\end{proof}

A consequence of Theorem~\ref{thm:main} along with
Lemma~\ref{lem:objectives} is that the event-triggered policy $\ETP$
guarantees
\begin{equation*}
  \condexpect[\ETP]{x_k^2}{I_0^+} \leq \max \{ c^{2k} x_0^2 , B \},
  \quad \forall k \in \integersnonneg ,
\end{equation*}
the original control objective. In other words, the proposed
event-triggered transmission policy guarantees that the expected value
of $x_k^2$ converges at an exponential rate to its ultimate bound of~$B$.

  \begin{remark}\longthmtitle{Sufficient
      conditions impose  lower bounds on the ultimate bound} \label{rem:suff-cond}
    {\rm 
      The two conditions identified in Theorem~\ref{thm:main} to
      ensure the satisfaction of the control objective may be
      interpreted as lower bounds on the choice of the ultimate
      bound~$B$. The first condition, $B > B^*$, comes from
      Proposition~\ref{prop:H} and ensures the monotonicity property
      of the open-loop performance function~$H$. Thus, as expected, we
      see from~\eqref{eq:Bc} that $B^*$ does not depend on the channel
      properties, namely the packet-drop probability $(1-p)$, or the
      look-ahead horizon~$D$.  On the other hand, the second
      condition, $\Gc(D) < 0$, imposes a lower bound on~$B$ which does
      have a dependence on both parameters $p$ and $D$. This condition
      can be rewritten as
      \begin{equation}\label{eq:rewrite-Gc}
        \left( g_D(c^2) - g_D( \abar^2 ) \right) \frac{ B }{ c^{2D} }
        > \Mbar \left( g_D(a^2) - g_D(1) \right).
      \end{equation}
      Comparing this inequality with~\eqref{eq:Bc}, we see that there
      is a strong resemblance between the two. In fact, if
      in~\eqref{eq:rewrite-Gc} the function $g_D$ were replaced with
      $\log$ and the factor $c^{2D}$ removed, we would
      get~\eqref{eq:Bc}. This is not unexpected because $\Gc(D)$ is a
      uniform (over the plant state space) upper bound on
      $\Gp{S_j}{D}$, which is nothing but the expectation of the
      open-loop performance function at the next (random) reception
      time.} \oprocend
  \end{remark}


  The next result shows that if the event-triggered transmission
  policy meets the control objective for a certain look-ahead horizon,
  then it also meets it for any other shorter look-ahead horizon.

\begin{corollary}\longthmtitle{If $\ETP$ meets the control objective
    with parameter $D$ then it also does with a smaller
    $D$}\label{cor:ETP-smaller-D}
  Let $B > B^*$ and $D \in \integerspos$ such that $\Gc(D) < 0$. Then,
  for any $D'<D$, the event-triggered transmission policy $\ETP$ with
  parameter $D'$ meets the control objective.
\end{corollary}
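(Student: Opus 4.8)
The plan is to reduce the corollary to a single application of Theorem~\ref{thm:main}, rather than re-running the convergence analysis from scratch. The key observation is that Theorem~\ref{thm:main} has exactly two hypotheses: $B > B^*$, which does not involve the look-ahead horizon at all (as emphasized in Remark~\ref{rem:suff-cond}, $B^*$ is determined by~\eqref{eq:Bc} and depends only on the system parameters, being independent of both $p$ and $D$), and $\Gc(D) < 0$, which is the only place the parameter $D$ enters. Thus it suffices to show that the second hypothesis, assumed here for $D$, is inherited by every smaller positive integer $D'$.

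This is precisely where I would use the monotonicity of $\Gc$ already established earlier. First I would invoke Lemma~\ref{cor:Gc}, which states that $\Gc$ (cf.~\eqref{eq:param-constraint}) is strictly increasing on $[0,\infty)$. Since $D' < D$ with $D', D \in \integerspos$, strict monotonicity immediately gives $\Gc(D') < \Gc(D) < 0$, and in particular $\Gc(D') < 0$. Combined with the standing assumption $B > B^*$, this verifies that both hypotheses of Theorem~\ref{thm:main} hold with $D'$ in place of $D$.

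The conclusion then follows by applying Theorem~\ref{thm:main} to the event-triggered policy $\ETP$ run with look-ahead horizon $D'$, which yields that it meets the control objective. I do not expect a genuine obstacle here, since the argument is essentially monotonicity followed by reapplication. The only point worth double-checking is that none of the underlying machinery secretly requires the specific value $D$: but Proposition~\ref{prop:H} is stated for every $y \in \realnonneg$ independently of the horizon, and $D$ enters the proof of Theorem~\ref{thm:main} only through $\Gc$ and the functions $g_D$, both already accounted for in the above reduction. Hence the argument is clean.
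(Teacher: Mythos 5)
Your proposal is correct and follows exactly the paper's own argument: invoke the strict monotonicity of $\Gc$ (Lemma~\ref{cor:Gc}) to get $\Gc(D') < \Gc(D) < 0$, then reapply Theorem~\ref{thm:main} with parameter $D'$. The additional observation that $B > B^*$ is independent of $D$ is a useful sanity check but does not change the argument.
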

\begin{proof}
  For $D'<D$, Corollary~\ref{cor:Gc} guarantees that
  $\Gc(D')<\Gc(D)$. Therefore, $\Gc(D)<0$ implies that $\Gc(D')<0$.
  Thus, according to Theorem~\ref{thm:main}, the event-triggered
  transmission policy~\eqref{eq:ET-design} with parameter $D'$ ensures
  the control objective is met.
\end{proof}

Note that this result is aligned with Remark~\ref{rem:role-D} where we
made the observation that, intuitively, a larger~$D$ in the
event-triggered transmission policy~\eqref{eq:ET-design} is more
conservative.  It is also interesting to observe that, as a result of
Corollary~\ref{cor:ETP-smaller-D}, if $\Gc(D)<0$ is satisfied for $D >
1$, then the control objective is met with $D = 1$, which corresponds
to a time-triggered policy that transmits at every time step (i.e.,
has period $T = 1$).

\subsection{Performance guarantees: benefits over time-triggering}

Here we analyze the efficiency of the proposed event-triggered
transmission policy in terms of the fraction of the number of time
steps at which transmissions occur. For any stopping time $K$, we
introduce the \emph{expected transmission fraction}
\begin{equation}
  \label{eq:tx-frac}
  \Fc_0^K \triangleq \frac{ \condexpect[\ETP]{ \displaystyle
      \sum_{k = 1}^K \indfun{\{t_k = 1\}} }{ I_0^+ } }{ \condexpect[\ETP]{
      K }{ I_0^+ } } .
\end{equation}
This corresponds to the expected fraction of time steps from $1$ to
$K$ at which transmissions occur. Note that $K$ might be a random
variable itself, which justifies the expectation operation taken in
the denominator. The following result provides an upper bound on this
expected transmission fraction.

\begin{proposition}\longthmtitle{Upper bound on the expected
    transmission fraction}\label{prop:TF}
  Suppose that $\Gc(D+\Bc) < 0$ (see~\eqref{eq:param-constraint}),
  where $D$ is the parameter in the event-triggered transmission
  policy~\eqref{eq:ET-design} and $\Bc \in \integersnonneg$. Then
  \begin{equation*}
    \Fc_0^\infty \leq \frac{ 1 }{ 1 + \Bc p } .
  \end{equation*}
\end{proposition}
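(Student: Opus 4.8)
The plan is to prove the bound through a renewal-type counting argument over the inter-reception intervals $[S_j,S_{j+1}]$, reducing everything to a lower bound on the expected idle (non-transmitting) time per cycle. I would decompose each cycle into an idle phase $\{S_j+1,\dots,T_j-1\}$ and a transmission phase $\{T_j,\dots,S_{j+1}\}$, where $T_j$ is the first trigger time (cf.~\eqref{eq:Tj}), and set $N_j \triangleq T_j-S_j-1$ and $M_j \triangleq S_{j+1}-T_j+1$ for their lengths. The key structural observation is that, since $\ETP$ sends no packet before $T_j$, the trigger time $T_j$ is a deterministic function of $x_{S_j}$ and the noise on $(S_j,T_j)$, and is therefore independent of the packet-drop realizations; consequently, given $T_j$, the number of transmissions $M_j$ is geometric with parameter $p$, so that $\condexpect[\ETP]{M_j}{I_{S_j}^+}=1/p$. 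Writing $\Fc_0^\infty$ as the ratio of expected cumulative transmissions to expected cumulative time over the cycles $0,\dots,n-1$ and using this identity, the whole claim reduces to the uniform per-cycle bound $\condexpect[\ETP]{N_j}{I_{S_j}^+}\ge \Bc$; granting it, the ratio is at most $\frac{n/p}{n\Bc+n/p}=\frac{1}{1+\Bc p}$.

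To attack this idle-time bound, I would first convert the sign hypothesis into a statement about the look-ahead function along the idle phase. Using Proposition~\ref{prop:GkD-prop}(a) together with the tower property, exactly as in the proof of Theorem~\ref{thm:main}, the relations $\condexpect[\ETP]{\G{S_j+1}{D}}{I_{S_j}^+}=\Gp{S_j}{D+1}$ and $\condexpect[\ETP]{\G{k+1}{D}}{I_{S_j}^+}=\condexpect[\ETP]{\G{k}{D+1}}{I_{S_j}^+}$ telescope, throughout the no-reception phase, to $\condexpect[\ETP]{\G{S_j+m}{D}}{I_{S_j}^+}=\Gp{S_j}{D+m}$. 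Since $\Gc(D+\Bc)<0$, Proposition~\ref{prop:GkD-prop}(b) gives $\Gp{S_j}{D+\Bc}<0$, and Proposition~\ref{prop:GkD-prop}(c) then yields $\Gp{S_j}{D+m}\le \Gp{S_j}{D+\Bc}<0$ for every $m\in\{1,\dots,\Bc\}$. Thus the criterion driving the trigger has strictly negative conditional mean at each of the first $\Bc$ steps after $S_j$, which is the quantitative content of ``it is safe not to transmit for at least $\Bc$ steps.''

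The main obstacle is converting this expectation-level negativity into the first-passage bound $\condexpect[\ETP]{T_j-S_j}{I_{S_j}^+}\ge \Bc+1$, since the realized look-ahead $\G{S_j+m}{D}$ depends on the random state $x_{S_j+m}$ and can cross zero early under a large noise excursion, so $T_j\ge S_j+\Bc+1$ fails pathwise. Here I would lean on the monotonicity of the open-loop performance function $H$ (Proposition~\ref{prop:H}): in the transparent deterministic-channel case ($p=1$, cf.~Remark~\ref{rem:nodrop}) the trigger fires precisely when the first violation of $H$ enters the horizon $D$, so $\Gc(D+\Bc)<0 \Rightarrow H(D+\Bc,x_{S_j}^2)<0$, the first-violation index exceeds $D+\Bc$, and $T_j-S_j\ge \Bc+1$ holds deterministically. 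To lift this to the stochastic channel I would introduce the auxiliary process $Z_m \triangleq \G{S_j+m}{D+\Bc-m}$, which by Proposition~\ref{prop:GkD-prop}(a) is a martingale on the no-reception phase with constant conditional mean $\Gp{S_j}{D+\Bc}<0$ and with $Z_\Bc=\G{S_j+\Bc}{D}$ equal to the triggering function at step $\Bc$; combining optional stopping at $(T_j-S_j)\wedge\Bc$ with the monotonicity of $H$ and of $\Gp{S_j}{\cdot}$ is intended to furnish the required control on the expected first-passage time. I expect this lifting step---reconciling the expectation-level sign conditions with a pathwise first-passage statement---to be the crux of the whole argument.

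Finally, I would make the renewal ratio rigorous by first working with the stopping times $K=S_n$, passing to the limit $n\to\infty$ (invoking $a^2(1-p)<1$, cf.~Remark~\ref{rem:necc-condition-standing}, to guarantee finiteness of the expectations involved), and then verifying that for a general horizon $K$ the contribution of the single incomplete cycle straddling $K$ is asymptotically negligible in the ratio. In this way $\Fc_0^\infty$ inherits the per-cycle bound, giving $\Fc_0^\infty\le \frac{1}{1+\Bc p}$.
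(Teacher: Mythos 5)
Your overall architecture is exactly the paper's: decompose time into reception-to-reception cycles $[S_j,S_{j+1}]$, show the expected number of transmissions per cycle is $1/p$ (geometric, since the drop realizations are independent of the trigger time $T_j$), lower-bound the expected idle time per cycle by $\Bc$ using $\Gc(D+\Bc)<0 \Rightarrow \Gp{S_j}{D+\Bc}<0$ together with the telescoped identities from Proposition~\ref{prop:GkD-prop}(a), and then pass to the limit in the ratio $\frac{N/p}{N\Bc+N/p}=\frac{1}{1+\Bc p}$. Every step you actually complete matches the paper's proof.

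The one place you depart is the idle-time bound, and it is worth being precise about what happens there. The paper goes from $\condexpect[\ETP]{\G{S_j+\Bc}{D}}{I_{S_j}^+}=\Gp{S_j}{D+\Bc}<0$ directly to the assertion $T_j-1\ge S_j+\Bc$, and hence to $\condexpect[\ETP]{T_j-1-S_j}{I_{S_j}^+}\ge\Bc$; it interposes no martingale or first-passage machinery. You are right that, read literally, this passes from a sign condition on a conditional expectation to a pathwise statement about the first time the realized $\G{\ell}{D}$ becomes nonnegative, so you have correctly located the load-bearing sentence of the argument. However, your proposed repair does not close this step either. Optional stopping applied to $Z_m=\G{S_j+m}{D+\Bc-m}$ at $\tau=(T_j-S_j)\wedge\Bc$ yields only $\condexpect[\ETP]{Z_\tau}{I_{S_j}^+}=\Gp{S_j}{D+\Bc}<0$, which constrains the sign of $Z$ at the stopped time but gives no lower bound on $\condexpect[\ETP]{\tau}{I_{S_j}^+}$: a martingale with negative mean can still exit through the upper barrier early with positive probability, and an expected first-passage bound is what the transmission-fraction computation actually requires. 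Moreover the trigger fires on $\G{S_j+m}{D}$, not on $Z_m$; relating the two for $m<\Bc$ needs a pathwise monotonicity of $\G{k}{d}$ in $d$, whereas Proposition~\ref{prop:GkD-prop}(c) only provides monotonicity of $\Gp{S_j}{d}$ at reception times. So your proposal reproduces the paper's proof in all of its completed steps and replaces its one-line inference at the crux with a sketch that, as written, would not deliver $\condexpect[\ETP]{T_j-1-S_j}{I_{S_j}^+}\ge\Bc$; to finish you would need either a genuinely pathwise version of the negativity (which is what the paper implicitly asserts) or a different route to the expected first-passage bound.
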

\begin{proof}
  Given Corollary~\ref{cor:ETP-smaller-D}, the remainder of the proof
  relies on finding an upper bound on the expected transmission
  fraction in a cycle from one reception time to the next, i.e.,
  $\Fc_{S_j}^{S_{j+1}}$ and then extending it to obtain the
  \emph{running transmission fraction} $\Fc_{0}^{S_N}$ for an
  arbitrary $N \in \integerspos$.  Note that, in any such cycle, the
  channel is idle, i.e., $t_k = 0$, for $k$ from $S_j + 1$ to
  $T_j - 1$ and transmissions occur from $T_j$ to $S_{j+1}$.

  Now, observe that the assumption that~\eqref{eq:param-constraint} is
  satisfied with $D + \Bc$ in place of $D$ implies, according to
  Proposition~\ref{prop:GkD-prop}(b), that $\Gp{S_j}{D+\Bc} < 0$ for
  all $j \in \integersnonneg$. We also know from~\eqref{eq:step1}
  and~\eqref{eq:step2} that
  \begin{align*}
    \condexpect[\ETP]{ \G{S_j+\Bc}{D} }{ I_{S_j}^+ } =
    \condexpect[\ETP]{ \Gp{S_j}{D+\Bc} }{ I_{S_j}^+ } < 0,
  \end{align*}
  which means that $T_j - 1 \geq S_j + \Bc$. Thus,
  \begin{align*}
    \condexpect[\ETP]{ \displaystyle \sum_{k=S_j + 1}^{S_{j+1}}
    \indfun{\{t_k = 0\}} }{ I_{S_j}^+ } &= \condexpect[\ETP]{
                                          T_j - 1 - S_j }{ I_{S_j}^+ }
                                          \geq \Bc .
  \end{align*}
  On the other hand, since the probability of $S_{j+1} - T_j + 1$
  being $1$ is $p$, being $2$ is $(1-p) p$, and so on, we note that
  \begin{align*}
    \condexpect[\ETP]{ \displaystyle \sum_{k=S_j + 1}^{S_{j+1}}
    \indfun{\{t_k = 1\}} }{ I_{S_j}^+ } &= \condexpect[\ETP]{S_{j+1} -
                                          T_j + 1 }{ I_{S_j}^+ }
    \\
                                        &= p \sum_{s=0}^\infty (s + 1)
                                          (1-p)^s = \frac{ 1 }{ p } .
  \end{align*}
  We can extend this reasoning further to $N$ cycles, from $S_0 = 0$
  to $S_N$, to obtain
  \begin{align*}
    &\condexpect[\ETP]{ \displaystyle \sum_{k=1}^{S_N} \indfun{\{t_k =
        0\}} }{ I_{S_j}^+ } \geq N \Bc, 
    \\
    &\condexpect[\ETP]{ \displaystyle \sum_{k=1}^{S_N} \indfun{\{t_k =
        1\}} }{ I_{S_j}^+ } = \frac{ N }{ p } .
  \end{align*}
  Finally, note that
  \begin{align*}
    &\condexpect[\ETP]{ S_N }{ I_0^+ }
    \\
    &= \condexpect[\ETP]{ \displaystyle \sum_{k=1}^{S_N} \indfun{\{t_k =
        0\}}}{ I_0^+ } + \condexpect[\ETP]{ \displaystyle
      \sum_{k=1}^{S_N} \indfun{\{t_k = 1\}} }{ I_0^+ } .
  \end{align*}
  Then using~\eqref{eq:tx-frac}, this yields an upper bound on the
  expected transmission fraction during $\intrangecc{0}{S_N}$
  \begin{equation*}
    \Fc_0^{S_N} \leq \frac{ 1 }{ 1 + \Bc p } ,
  \end{equation*}
  which we see is independent of $N$. The result then follows by
  taking the limit as $N \rightarrow \infty$.
\end{proof}

An expected transmission fraction of $1$ corresponds to a transmission
occurring at every time step almost surely, i.e., essentially a
time-triggered policy. Therefore, Proposition~\ref{prop:TF} states
that the number of transmissions under the event-triggered policy
$\ETP$ is guaranteed to be less than that of a time-triggered policy.

  \begin{remark}\longthmtitle{Interpretation of the parameter
      $D$-cont'd} \label{rem:role-D-continued}
    {\rm Proposition~\ref{prop:TF} is consistent with our intuition,
      cf. Remark~\ref{rem:role-D}, that a larger~$D$ in the
      event-triggered transmission policy~\eqref{eq:ET-design} is more
      conservative. In fact, if $D_1 < D_2$ and $D_1 + \Bc_1 = D_2 +
      \Bc_2$, then $\Bc_1 > \Bc_2$ and thus the upper bound on the
      expected transmission fraction is larger for larger~$D$. Note
      that since Proposition~\ref{prop:TF} is only a statement about
      the upper bound on the expected transmission fraction, we do not
      formally claim that larger $D$ is more conservative. In fact,
      different control parameters lead to different state
      trajectories and thus, formally, we can only say that larger $D$
      is more conservative at each point in state space (corresponding
      to the initial condition for each trajectory). However,
      Remark~\ref{rem:role-D}, Corollary~\ref{cor:ETP-smaller-D},
      Proposition~\ref{prop:TF} and the simulation results in the
      sequel together suggest that, starting from the same initial
      conditions, a larger $D$ has a larger expected transmission
      fraction.  \oprocend }
  \end{remark}

\begin{remark}\longthmtitle{Optimal sufficient periodic transmission
    policy}\label{rem:optim-periodic-policy} {\rm Under the
    assumptions of Proposition~\ref{prop:TF}, we know that the
    time-triggered policy with period $T=1$ satisfies the control
    objective.  It is conceivable that a time-triggered transmission
    policy with period $T > 1$ (i.e., with transmission fraction
    $1/T<1$) also achieves it. To see this, consider the open-loop
    performance evolution function~\eqref{eq:H-form} at integer
    multiples of $T$, i.e., $H(sT, y)$
    and~\eqref{eq:param-constraint}. Then, a time-triggered
    transmission policy with period $T$ achieves the control objective
    if \begin{equation}\label{eq:periodic-policy-suff-cond}
      \mkern-10mu \left( g_1( \abar^{2T} ) - g_1(c^{2T}) \right)
      \frac{ B }{ c^{2T} } + \Mbar \left( g_1(a^{2T}) - \frac{1}{p}
      \right) < 0 .
    \end{equation}
    The periodic transmission policy with the least transmission
    fraction can be found by maximizing $T$ that
    satisfies~\eqref{eq:periodic-policy-suff-cond}. In any case, a
    time-triggered implementation determines the transmission times a
    priori, while the event-triggered implementation determines them
    online, in a feedback fashion. The latter therefore renders the
    system more robust to uncertainties in the knowledge of the system
    parameters, noise and packet drop distributions.  }  \oprocend
\end{remark}

\section{Extension to the vector case}\label{sec:vector-case}

In this section, we outline how to extend the design and analysis of
the event-triggered transmission policy to the vector case, and
discuss the associated challenges. Consider a multi-dimensional system
evolving as

\begin{subequations}\label{eq:vec-sys}
  \begin{equation}\label{eq:vec-x}
    x_{k+1} = A x_k + Q u_k + v_k ,
  \end{equation}
  where $x \in \real^n$, $u \in \real^m$, $v \in \real^n$, $A \in
  \real^{n \times n}$, and $Q \in \real^{n \times m}$. The process
  noise $v$ is zero-mean independent and identically distributed with
  positive semi-definite covariance matrix $\Sigma$. Let the control
  be given by $u_k = L \xhat_k^+$, where $\xhat_k^+$ is given
  by~\eqref{eq:xhat_p} and
  \begin{equation}\label{eq:vec-xhat}
    \xhat_{k+1} = \Abar \xhat_k^+ \triangleq (A + QL) \xhat_k^+ .
  \end{equation}
\end{subequations}
We can define the performance function as
\begin{equation*}
  \pf_k = x_k^T x_k - \max \{ c^{2(k-\LRT{k})} x_{\LRT{k}}^T x_{\LRT{k}}, B \} .
\end{equation*}

The key to our developments of Sections~\ref{sec:design}
and~\ref{sec:analysis} is the explicit closed-form expressions of the
look-ahead criterion and the performance evaluation functions,
$\G{k}{D}$ and $\Gp{k}{D}$ respectively, which have allowed us to
evaluate the trigger~\eqref{eq:ET-design} and unveil the necessary
properties to ensure the satisfaction of the original control
objective. However, in the vector case, it is challenging to obtain
closed-form expressions for these functions because this involves
obtaining closed-form expressions for the series
\begin{equation*}
  \sum_{s=D}^\infty \nu_1^T (M_1^s)^T M_2^s \nu_2 ,
\end{equation*}
with $\nu_1$ and $\nu_2$ being vectors such as $x_k$, $e_k$ or
$v_{k+s}$, and $M_1$ and $M_2$ being either of the matrices $A$ or
$\Abar$.  It is however possible to obtain closed-form upper bounds
$\bar{G}_k^D$ and $\bar{J}_k^D$ for the functions $\G{k}{D}$ and
$\Gp{k}{D}$ respectively, essentially by upper bounding $\expect{
  \pf_{k+s} }$. The following result makes this explicit.

\begin{proposition}\longthmtitle{Upper bound on the expected value of the
    performance function}\label{prop:vec-hbound}
  For $k \in \integersnonneg$, let $s \in \integersnonneg$ be such
  that $k + s \in \intrangeoc{\LRT{k}}{\LRT{k + 1}}$ and define
  \begin{align*}
    &\pfb_{k+s} \triangleq \Enorm{ \Abar }^{2s} \Enorm{x_k}^2 + 2
    \Enorm{ \Abar }^s (\Enorm{ A }^s + \Enorm{ \Abar }^s) \Enorm{x_k}
    \Enorm{e_k}
    \\
    &+ ( \Enorm{ A }^{2s} + 2 \Enorm{ A }^s \Enorm{ \Abar }^s +
    \Enorm{ \Abar }^{2s} ) \Enorm{ e_k }^2 + \Mbar ( \Enorm{ A
    }^{2s} - 1 ) \\
    &- \max \{ c^{2(k+s-\LRT{k})} x_{\LRT{k}}^T x_{\LRT{k}}, B \} ,
  \end{align*}
  where $\Mbar \triangleq \frac{ \tr{\Sigma} }{ \Enorm{A}^2 - 1
  }$. Further, for $k = S_j$ with $j \in \integersnonneg$, define
  \begin{align*}
    &\pfb_{k+s}^+ \triangleq \Enorm{ \Abar }^{2s} \Enorm{x_{k}}^2 +
    \Mbar ( \Enorm{ A }^{2s} - 1 ) - \max \{ c^{2s} x_{k}^T x_{k}, B
    \} .
  \end{align*}
  Then,
  \begin{align*}
    \condexpect{ \pf_{k+s} }{ I_k, k + s = \LRT{k+1} } & \leq \expect{
      \pfb_{k+s} } = \pfb_{k+s} ,
    \\
    \condexpect{ \pf_{S_j+s} }{ I_{S_j}^+, S_j + s = S_{j+1} } & \leq
    \expect{ \pfb_{S_j+s}^+ } = \pfb_{S_j+s}^+ .
  \end{align*}
\end{proposition}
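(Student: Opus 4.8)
The plan is to compute the open-loop evolution of the state over an inter-reception interval and then bound its conditional second moment term by term. First I would derive the error recursion. Substituting $u_m = L\xhat_m^+$ and $\xhat_m^+ = x_m - e_m^+$ into~\eqref{eq:vec-x} gives the closed-loop dynamics $x_{m+1} = \Abar x_m - QL e_m^+ + v_m$, and combining this with $\xhat_{m+1} = \Abar\xhat_m^+$ yields $e_{m+1} = (\Abar - QL)e_m^+ + v_m = A e_m^+ + v_m$. The conditioning event fixes the next reception after $k$ at time $k+s$, so no reception occurs on the interval $(k,k+s)$; during this window $e_m^+ = e_m$ and $\xhat_m^+ = \xhat_m$, and iterating the recursions from the information $I_k$ (which carries $x_k$ and $e_k$) produces $\xhat_{k+s} = \Abar^s \xhat_k$ and $e_{k+s} = A^s e_k + \sum_{i=0}^{s-1} A^{s-1-i} v_{k+i}$, hence
\begin{equation*}
  x_{k+s} = \Abar^s x_k + (A^s - \Abar^s) e_k + \sum_{i=0}^{s-1} A^{s-1-i} v_{k+i} .
\end{equation*}

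Next I would take the conditional expectation of $\Enorm{x_{k+s}}^2$. Since the Bernoulli packet-drop process is independent of the process noise, conditioning on the reception time leaves $v_k,\dots,v_{k+s-1}$ zero-mean and mutually independent; consequently the cross terms between the deterministic part and the noise vanish, and the expectation splits into a deterministic contribution and a pure-noise contribution. For the deterministic part I would use the triangle inequality together with submultiplicativity of the operator norm and the estimate $\Enorm{A^s - \Abar^s} \le \Enorm{A}^s + \Enorm{\Abar}^s$, giving $\Enorm{\Abar^s x_k + (A^s - \Abar^s)e_k}^2 \le \big(\Enorm{\Abar}^s\Enorm{x_k} + (\Enorm{A}^s + \Enorm{\Abar}^s)\Enorm{e_k}\big)^2$, which expands exactly into the first three terms of $\pfb_{k+s}$. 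For the noise part, independence gives $\condexpect{\Enorm{\sum_i A^{s-1-i}v_{k+i}}^2}{I_k} = \sum_{i=0}^{s-1}\tr{(A^{s-1-i})^T A^{s-1-i}\Sigma}$; bounding each summand by $\tr{M^T M \Sigma} \le \Enorm{M}^2 \tr{\Sigma}$ (valid because $\Sigma \succeq 0$ and $M^T M \preceq \Enorm{M}^2 I$) and summing the geometric series $\sum_{j=0}^{s-1}\Enorm{A}^{2j}$ produces $\Mbar(\Enorm{A}^{2s}-1)$ with $\Mbar = \tr{\Sigma}/(\Enorm{A}^2 - 1)$.

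To close the first bound I would note that, since no reception occurs on $(k,k+s)$, the latest reception before $k+s$ is still $\LRT{k}$, so the $\max$ term in $\pf_{k+s}$ is precisely the one appearing in $\pfb_{k+s}$; subtracting it, and observing that $\pfb_{k+s}$ is a deterministic function of $I_k$ (so that $\expect{\pfb_{k+s}} = \pfb_{k+s}$), completes the estimate. The second bound is the same computation specialized to $k = S_j$: a reception at $S_j$ forces $\xhat_{S_j}^+ = x_{S_j}$ and $e_{S_j}^+ = 0$, which annihilates the $e_k$-dependent terms and reduces the deterministic part to $\Enorm{\Abar^s x_{S_j}}^2 \le \Enorm{\Abar}^{2s}\Enorm{x_{S_j}}^2$; the noise bound and the $\max$ term (now with exponent $c^{2s}$ relative to $x_{S_j}$) are handled identically, yielding $\pfb_{S_j+s}^+$.

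The main obstacle is conceptual rather than computational: I must verify that conditioning on the random reception time does not bias the future process noise. This is the single place where the modeling assumption that packet drops are Bernoulli and independent of the noise is essential, and it is what legitimizes both the vanishing of the cross terms and the covariance computation. The remaining operator-norm and trace inequalities are routine; I would emphasize that they are necessarily conservative, since the operator norm does not simultaneously diagonalize $A$ and $\Abar$, which is exactly why closed-form expressions as in the scalar case are unavailable and only upper bounds can be obtained.
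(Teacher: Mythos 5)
Your proposal is correct and follows essentially the same route as the paper's proof: both arrive at the open-loop expression $x_{k+s} = \Abar^s x_k + (A^s-\Abar^s)e_k^+ + \sum_{i=0}^{s-1}A^{s-1-i}v_{k+i}$ (you via the error recursion $e_{m+1}=Ae_m^+ + v_m$, the paper by unrolling the state recursion and telescoping $\sum_i A^{s-1-i}(\Abar-A)\Abar^i=\Abar^s-A^s$), then kill the noise cross terms, apply the triangle inequality and submultiplicativity with $\Enorm{A^s-\Abar^s}\le\Enorm{A}^s+\Enorm{\Abar}^s$, bound the noise covariance by $\tr{\Sigma}\sum_j\Enorm{A}^{2j}$, and specialize to $k=S_j$ with $e_{S_j}^+=0$ for the second inequality. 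Your added remarks on why conditioning on the reception time leaves the noise unbiased, and on why $\LRT{k+s}=\LRT{k}$ so the $\max$ terms match, are correct and make explicit points the paper leaves implicit.
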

\begin{proof}
  Notice from~\eqref{eq:vec-sys} that
  \begin{align*}
    x_{k+s} = A^s x_k + \sum_{i=0}^{s-1} A^{s-1-i}(\Abar - A)\Abar^i
    \xhat_k^+ + \sum_{i=0}^{s-1} A^{s-1-i} v_{k+i} .
  \end{align*}
  Further, observe that
  \begin{align*}
    \sum_{i=0}^{s-1} A^{s-1-i}(\Abar - A)\Abar^i &= \sum_{i=1}^{s}
    A^{s-i}\Abar^i - \sum_{i=0}^{s-1} A^{s-i}\Abar^i \\
    &= \Abar^s - A^s ,
  \end{align*}
  which yields
  \begin{align*}
    x_{k+s} = A^s x_k + (A^s - \Abar^s) e_k^+ + \sum_{i=0}^{s-1}
    A^{s-1-i} v_{k+i} ,
  \end{align*}
  where we have also used the fact that $e_k^+ = x_k -
  \xhat_k^+$. Then,
  \begin{align*}
    &\condexpect{ x_{k+s}^T x_{k+s} }{ I_k^+, \LRT{k+1} = k + s } \\
    &= x_k^T (\Abar^s)^T \Abar^s x_k + 2 x_k^T (\Abar^s)^T (A^s -
    \Abar^s) e_k^+ \\
    &+e_k^+ (A^s - \Abar^s)^T (A^s - \Abar^s) e_k^+\\
    &+ \expect{ \sum_{i=0}^{s-1} v_{k+i}^T ( A^{s-1-i} )^T A^{s-1-i}
      v_{k+i} } \\
    &\leq \Enorm{ \Abar }^{2s} \Enorm{x_k}^2 + 2 \Enorm{ \Abar }^s
    (\Enorm{ A }^s + \Enorm{ \Abar }^s) \Enorm{x_k} \Enorm{e_k^+} \\
    &+ ( \Enorm{ A }^{2s} + 2 \Enorm{ A }^s \Enorm{ \Abar }^s +
    \Enorm{ \Abar }^{2s} ) \Enorm{ e_k^+ }^2 + \Mbar ( \Enorm{ A
    }^{2s} - 1 ) .
  \end{align*}
  The result now follows from the fact that $e_k^+ = e_k$ if $k \neq
  S_j$ for any $j \in \integersnonneg$ and the definitions of $\pf_k$,
  $\pfb_{k+s}$ and $\pfb^+_{k+s}$.
\end{proof}

Based on Proposition~\ref{prop:vec-hbound}, we define $\bar{G}_k^D$
and $\bar{J}_k^D$ analogously to~\eqref{eq:GkD} and~\eqref{eq:Gp-kD},
respectively, except with $\pfb$ and $\pfb^+$ instead of~$\pf$. We do
not include the resulting closed-form expressions of $\bar{G}_k^D$ and
$\bar{J}_k^D$ for the sake of brevity. 

With these elements in place, we define the event-triggered policy
$\ETPvec$ given the last successful reception time $R_k = S_j$
as
\begin{subequations}\label{eq:ET-vec}
  \begin{equation}
    \label{eq:ETP-vec}
    \ETPvec : t_k =
    \begin{cases}
      0, \quad \text{if } k \in \{ S_j + 1, \ldots, F_k - 1 \}
      \\
      1, \quad \text{if } k \in \{ F_k, \ldots, S_{j+1} \} ,
    \end{cases}
  \end{equation}
  where
  \begin{equation}\label{eq:Fk-vec}
    F_k \triangleq \min \{ \ell > R_k : \bar{G}_\ell^D \geq 0 \} .
  \end{equation}
\end{subequations}
The next result establishes that this new event-triggered policy
guarantees the desired stability result in the vector case.

\begin{theorem}\longthmtitle{The event-triggered policy meets the
    control objective}\label{thm:main-vc}
  Suppose $D \in \integerspos$ and that~\eqref{eq:param-constraint} is
  satisfied with $a = \Enorm{A}$ and $\abar = \Enorm{\Abar}$. Then,
  under the same hypotheses as in Proposition~\ref{prop:H}, the
  event-triggered policy $\ETPvec$ guarantees that
  $\condexpect[\ETPvec]{ \pf_{k} }{ I_{\LRT{k}}^+ } \leq 0$ for all $k
  \in \integerspos$.
\end{theorem}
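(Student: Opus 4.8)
The plan is to follow the proof of Theorem~\ref{thm:main} line by line, substituting the exact performance quantities by the norm-based upper bounds supplied by Proposition~\ref{prop:vec-hbound} and checking that every inequality introduced points in the direction that preserves the sign arguments. Concretely, I would establish the two claims: (a) for each $j$, $\condexpect[\ETPvec]{\pf_{S_{j+1}}}{I_{S_j}^+} \le 0$ implies $\condexpect[\ETPvec]{\pf_k}{I_{S_j}^+} \le 0$ for all $k \in \intrangecc{S_j}{S_{j+1}}$; and (b) $\condexpect[\ETPvec]{\pf_{S_{j+1}}}{I_{S_j}^+} < 0$. Exactly as in the scalar case, these two claims combine to give the theorem, since for any $k$ the last reception time $\LRT{k}^+ = S_j$ places $k$ inside $\intrangecc{S_j}{S_{j+1}}$.

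The observation that makes the scalar machinery reusable is that, at a reception time, the upper bound $\pfb^+_{S_j+s}$ of Proposition~\ref{prop:vec-hbound} coincides \emph{verbatim} with the scalar open-loop performance evolution function~\eqref{eq:H-form}, namely $H(s, \Enorm{x_{S_j}}^2)$ with $a = \Enorm{A}$ and $\abar = \Enorm{\Abar}$ (the error and cross terms vanish because $e_{S_j}^+ = 0$). Consequently Proposition~\ref{prop:H} applies directly to this bound, so $H(\cdot, \Enorm{x_{S_j}}^2)$ has the monotonicity property~\eqref{eq:Hprop}. For claim (a) I would argue that, inside an inter-reception interval, the state evolves in open loop, so Proposition~\ref{prop:vec-hbound} gives $\condexpect[\ETPvec]{\pf_k}{I_{S_j}^+} \le H(k - S_j, \Enorm{x_{S_j}}^2)$; the monotonicity of $H$ (nonpositivity at a later index forces nonpositivity at every earlier index) then propagates the nonpositive value at $S_{j+1}$ backward across the whole interval. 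On the parameter-constraint side, note that $\bar{J}_{S_j}^D = \sum_{s \ge D} H(s, \Enorm{x_{S_j}}^2)(1-p)^{s-D}p$ is precisely the scalar expression of Lemma~\ref{lem:closed-form-Gp} with the norm substitutions, so Proposition~\ref{prop:GkD-prop}(b)--(c) transfer unchanged: $\Gc(D) < 0$ evaluated at $a = \Enorm{A}$, $\abar = \Enorm{\Abar}$ yields $\bar{J}_{S_j}^D < 0$ together with $\bar{J}_{S_j}^d \le \bar{J}_{S_j}^{d+1}$ for $d \le D$.

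For claim (b) I would reproduce the scalar computation: by the tower property and the definition~\eqref{eq:ET-vec} of $\ETPvec$, $\condexpect[\ETPvec]{\pf_{S_{j+1}}}{I_{S_j}^+} \le \condexpect[\ETPvec]{\bar{G}_{T_j}^0}{I_{S_j}^+}$, where $T_j = \min\{\ell > S_j : \bar{G}_\ell^D \ge 0\}$ is the first post-$S_j$ time the trigger fires (cf.~\eqref{eq:Tj}). I would then descend this look-ahead, one step at a time, to either $\bar{J}_{S_j}^{T_j - S_j}$ when $T_j \le S_j + D$, or $\bar{G}_{T_j - D}^D$ when $T_j > S_j + D$. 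In the first case claim (b) follows from $\bar{J}_{S_j}^{T_j - S_j} < 0$, already secured above; in the second, from $\bar{G}_{T_j - D}^D < 0$, which holds because the trigger has not fired on $\intrangeoo{S_j}{T_j}$ and $T_j - D$ lies in that interval.

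The main obstacle is the descent step just invoked: the vector analogue of the exact one-step recursion of Proposition~\ref{prop:GkD-prop}(a), relating $\condexpect{\bar{G}_{k+1}^D}{I_k, r_k = 0}$ to $\bar{G}_k^{D+1}$ and $\condexpect{\bar{G}_{k+1}^D}{I_k, r_k = 1}$ to $\bar{J}_k^{D+1}$. In the scalar setting this is an \emph{equality}, because $\condexpect{x_{k+1}^2}{I_k}$, $\condexpect{x_{k+1}e_{k+1}}{I_k}$ and $\condexpect{e_{k+1}^2}{I_k}$ obey clean quadratic one-step updates. For the norm-based bound $\bar{G}$ this fails: the term $\Enorm{x_{k+1}}\Enorm{e_{k+1}}$ does not propagate exactly through $\condexpect{\cdot}{I_k, r_k=0}$, and the triangle/submultiplicativity slack already present in $\pfb$ compounds across steps, so at best one obtains an \emph{inequality} recursion. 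The crux is therefore to prove this inequality in the sign-preserving direction, i.e. $\condexpect{\bar{G}_{k+1}^D}{I_k, r_k = 0} \le \bar{G}_k^{D+1}$ (and the companion bound on the $r_k = 1$ branch), which is delicate precisely because propagating the error norm through one exact step and then bounding it can produce a \emph{larger} coefficient than bounding directly from $k$ (since $\Enorm{A} > \Enorm{\Abar}$). Pinning down the regime in which this inequality holds --- or redefining $\bar{G}$, $\bar{J}$ so that it holds by construction --- is the one genuinely new piece of work; once it is in place, the descent terminates at a strictly negative quantity and the rest of the argument is verbatim the scalar proof.
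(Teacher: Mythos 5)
Your overall architecture matches the paper's: the two claims, the key observation that $\pfb^+_{S_j+s}$ coincides verbatim with the scalar $H(s,\Enorm{x_{S_j}}^2)$ under $a=\Enorm{A}$, $\abar=\Enorm{\Abar}$ (so that Proposition~\ref{prop:H} and claims (b)--(c) of Proposition~\ref{prop:GkD-prop} transfer to $\bar{J}_{S_j}^D$), and the case split $T_j\le S_j+D$ versus $T_j>S_j+D$ at the bottom of the descent. However, the obstacle you isolate in your last paragraph --- and explicitly leave unresolved as ``the one genuinely new piece of work'' --- makes the proposal incomplete as written, and it is in fact a self-inflicted obstacle: it appears only because you apply the norm bound at the top of the descent (replacing $\G{T_j}{0}$ by $\bar{G}_{T_j}^0$) and then try to push the \emph{bound} through the one-step recursion, which forces you to confront an inequality $\condexpect[\Tc]{\bar{G}_{k+1}^D}{I_k,r_k=0}\le\bar{G}_k^{D+1}$ that, as you rightly suspect, need not hold.

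The way out is to run the descent on the \emph{exact} look-ahead functions. The identities in Proposition~\ref{prop:GkD-prop}(a), $\condexpect[\Tc]{\G{k+1}{D}}{I_k,r_k=0}=\G{k}{D+1}$ and $\condexpect[\Tc]{\G{k+1}{D}}{I_k,r_k=1}=\Gp{k}{D+1}$, are pure tower-property statements about conditional expectations of the exact $\pf$ under the nested nominal policies $\Tc_{k+1}^D$ and $\Tc_k^{D+1}$; they use no scalar closed forms and hold verbatim for vector states. Hence $\condexpect[\ETPvec]{\pf_{S_{j+1}}}{I_{S_j}^+}=\condexpect[\ETPvec]{\G{T_j}{0}}{I_{S_j}^+}$ descends \emph{exactly}, with no compounding slack, to $\condexpect[\ETPvec]{\G{T_j-D}{D}}{I_{S_j}^+}$ when $T_j>S_j+D$, or to $\Gp{S_j}{T_j-S_j}$ when $T_j\le S_j+D$. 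The norm-based bounds of Proposition~\ref{prop:vec-hbound} are invoked only in the sign-preserving direction $\G{k}{D}\le\bar{G}_k^D$ and $\Gp{S_j}{D}\le\bar{J}_{S_j}^D$, and only at the two places where they are harmless: through the trigger (since $\ETPvec$ fires on $\bar{G}_\ell^D$, one has $\G{\ell}{D}\le\bar{G}_\ell^D<0$ for all $\ell\in\intrangeoo{S_j}{T_j}$, so in particular $\G{T_j-D}{D}<0$), and at the terminal step ($\Gp{S_j}{T_j-S_j}\le\bar{J}_{S_j}^{T_j-S_j}<0$ by the transferred claims (b)--(c)). This is exactly what the paper's displayed inequalities $\condexpect[\Tc]{\G{k+1}{D}}{I_k,r_k=0}\le\bar{G}_k^{D+1}$ and $\condexpect[\Tc]{\G{k+1}{D}}{I_k,r_k=1}\le\bar{J}_k^{D+1}$ encode: note that the left-hand sides carry the exact $\G{k+1}{D}$, not $\bar{G}_{k+1}^D$, which is what makes them chainable with the exact recursion. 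No recursion for $\bar{G}$ or $\bar{J}$ themselves is ever needed, so neither a regime restriction nor a redefinition of these functions is required.
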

\begin{proof}
  The main step is in proving an upper bound analogue of
  Proposition~\ref{prop:GkD-prop}(a). Note that
  \begin{subequations}\label{eq:weaker-properties}
  \begin{align}
    &\condexpect[\Tc]{ \G{k+1}{D} }{ I_k, r_k = 0 } \notag
    \\
    &= \condexpect[\Tc]{ \condexpect[\Tc_k^D]{ h_{ \LRT{k+1} } }{ I_k
      } }{ I_k, r_k = 0 } \notag
    \\
    &\leq \condexpect[\Tc]{ \pfb_{ \LRT{k+1} } }{ I_k, r_k = 0 }
    =\bar{G}_{k}^{D+1} ,
  \end{align}
  where we have used Proposition~\ref{prop:vec-hbound} in the
  inequality.  A similar reasoning yields
  \begin{align}
    &\condexpect[\Tc]{ \G{k+1}{D} }{ I_k, r_k = 1 } \leq
    \bar{J}_{k}^{D+1} .
  \end{align}    
  \end{subequations}
  Notice from the definition of $\pfb_{S_j+s}^+$ in
  Proposition~\ref{prop:vec-hbound} that the expression for
  $\bar{J}_{S_j}^{D}$ is the same as that of $\Gp{S_j}{D}$ in
  Lemma~\ref{lem:closed-form-Gp} with $a = \Enorm{A}$ and $\abar =
  \Enorm{\Abar}$. As a result, in the vector case, claims (b) and (c)
  of Proposition~\ref{prop:GkD-prop} hold for $\bar{J}_{S_j}^{D}$. The
  rest of the proof follows along the lines of the proof of
  Theorem~\ref{thm:main}.
\end{proof}

Thus, the upper bounds~\eqref{eq:weaker-properties} relating
$\G{k+1}{D}$ to $\bar{G}_k^{D+1}$ and $\bar{J}_k^{D+1}$ are sufficient
to guarantee that the event-triggered policy~\eqref{eq:ET-vec} meets
the control objective. However, the lack of a relationship between
$\bar{G}_{k}^{D+1}$ and $\bar{G}_{k}^{D}$ or $\bar{J}_{k}^{D}$
prevents us from obtaining an upper bound on the expected transmission
fraction. Nonetheless, as we described in
Remark~\ref{rem:optim-periodic-policy}, given the fact that a
time-triggering sampling period can only be designed keeping the worst
case in mind, it is reasonable to expect that the event-triggered
transmission policy would be more efficient in the usage of the
communication channel (this is shown in the simulations of the next
section). Finally, we believe that, in order to analytically quantify
transmission fraction and assess the efficiency of the event-triggered
design, one needs to make more substantial modifications to the
definitions of the functions $\bar{G}_k^D$ and~$\bar{J}_k^D$.

\myclearpage
\section{Simulations}\label{sec:sims}

Here we present simulation results for the system evolution under the
event-triggered transmission policy $\ETP$, first for a scalar system,
and then a vector system.

\subsubsection*{Scalar system}
We consider the dynamics~\eqref{eq:sys-evolve} with the following
parameters,
\begin{align*}
  &a = 1.05, \ p = 0.6, \ M = 1, \ c = 0.98, \ \abar = 0.95c,
  \\
  &B = 15.5, \ x(0) = 10 B .
\end{align*}
The process noise is drawn from a Gaussian distribution, with
covariance $M$.
To find the critical value $B^* = 12.92$ in Proposition~\ref{prop:H},
we use the method described in the proof of Lemma~\ref{lem:Bstar} and
the discussion subsequent to it. We performed simulations for $1000$
realizations of process noise and packet drops, all starting from the
same initial condition. Then, for each time step $k$, we computed the
empirical mean of the various quantities. This is illustrated in
Figures~\ref{fig:ETPD-sim} and~\ref{fig:ETPD-NT}. We performed
simulations with $D = 1$ and $D = 3$, and in each case $D + \Bc =
3$. Figure~\ref{fig:ETPD-sim} shows that the control
objective~\eqref{eq:overall-objective} is satisfied, as guaranteed by
Theorem~\ref{thm:main}. For $D = 3$, one can see that the control
objective is met more conservatively, which is consistent with the
intuitive interpretation of the transmission policies given in
Section~\ref{sec:two-step-design}.
\begin{figure}[htb!]
  \centering
  \includegraphics[width=.875\linewidth]{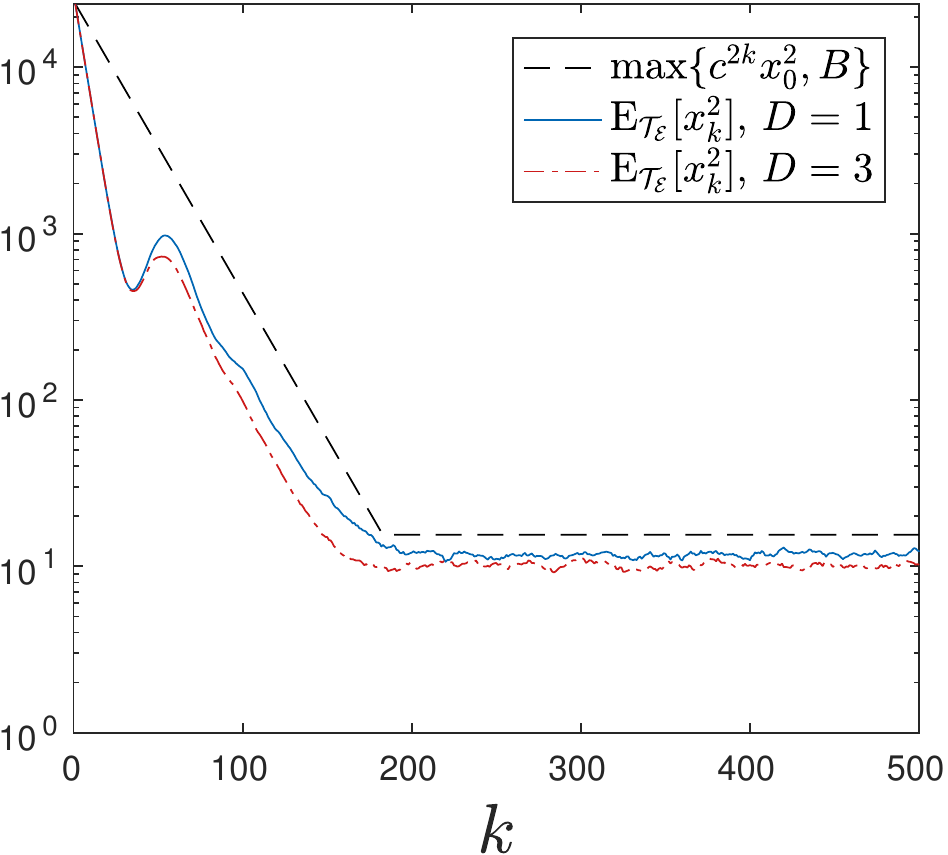}
  \caption{Plot of the evolution of the empirical mean
    $\expect[\ETP]{x_k^2}$ for the scalar example under the
    event-triggered transmission policy~\eqref{eq:ET-design} with $D =
    1$ and $D = 3$ and the performance bound, $\max \{ c^{2k} x_0^2 ,
    B \}$.}\label{fig:ETPD-sim}
\end{figure}
Figure~\ref{fig:ETPD-NT} shows the empirical running transmission
fractions for $D = 1$ and $D =3$, as well as the upper bound on the
transmission fraction $\Fc_0^\infty$ in the case of $D = 1$ obtained
in Proposition~\ref{prop:TF}. In the case of $D = 3$, this quantity
is~$1$.
\begin{figure}[htb!]
  \centering
  \includegraphics[width=.875\linewidth]{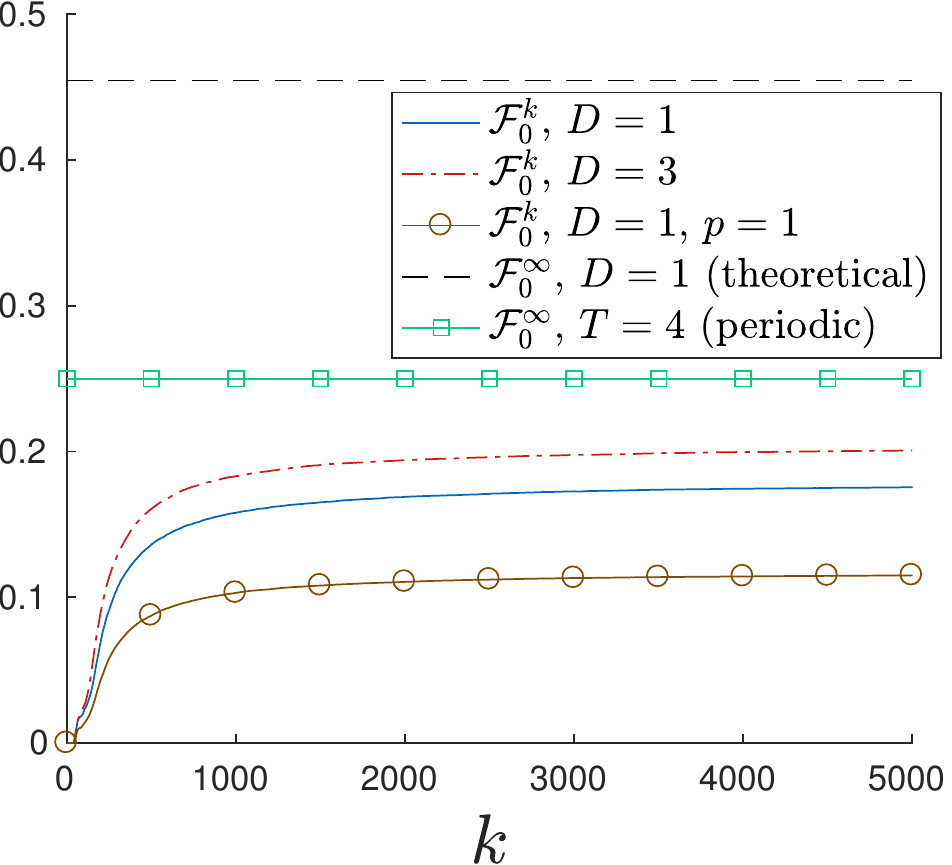}
  \caption{Plot of the evolution of the empirical running transmission
    fraction~$\Fc_0^k$ for the scalar example under the
    event-triggered transmission policy~\eqref{eq:ET-design} for
    $D = 1$ and $D = 3$, and the theoretical bound on the asymptotic
    transmission fraction $\Fc_0^\infty$ in the case of $D = 1$
    obtained in Proposition~\ref{prop:TF}. For $D = 3$, the latter
    is~$1$. For comparison, the plot also shows the empirical running
    transmission fractions in the case of $D = 1$ and $p =1$ (perfect
    channel) and for a periodic policy with period $T = 4$.
  }\label{fig:ETPD-NT}
\end{figure}
As expected, the conservativeness of the implementation with $D=3$ is
reflected in a higher transmission fraction and in the
conservativeness with which the control objective is satisfied. We
found the optimal sufficient period for a periodic transmission
policy, cf.  Remark~\ref{rem:optim-periodic-policy}, to be $1$. Thus,
the transmission fraction for the optimal sufficient periodic
transmission policy is $1$, which is higher than both the theoretical
and the actual transmission fractions for our implementation with
$D = 1$. Figure~\ref{fig:ETPD-NT} also shows the running transmission
fraction in the case of $D =1$ and $p = 1$ (perfect channel) and for a
periodic policy with period $T = 4$. We see that the proposed policy
automatically adjusts its transmission fraction with changes in the
dropout probability. Figure~\ref{fig:TTPD-sim} shows the evolution of
the performance function under the periodic policy with period $T = 4$
in the case of a deterministic channel ($p = 1$) and with a dropout
probability of $(1-p)=0.4$.  This is an example of a policy that works
for a perfect channel ($p=1$) but does not work for an imperfect one
($p = 0.6$).  Although the transmission fraction for this policy is
higher than that of our policy (cf. Figure~\ref{fig:ETPD-NT}), it
still fails to meet the control objective in the case of $p = 0.6$,
demonstrating the usefulness of the proposed event-triggered policy
over a periodic policy.
\begin{figure}[htb!]
  \centering
  \includegraphics[width=.875\linewidth]{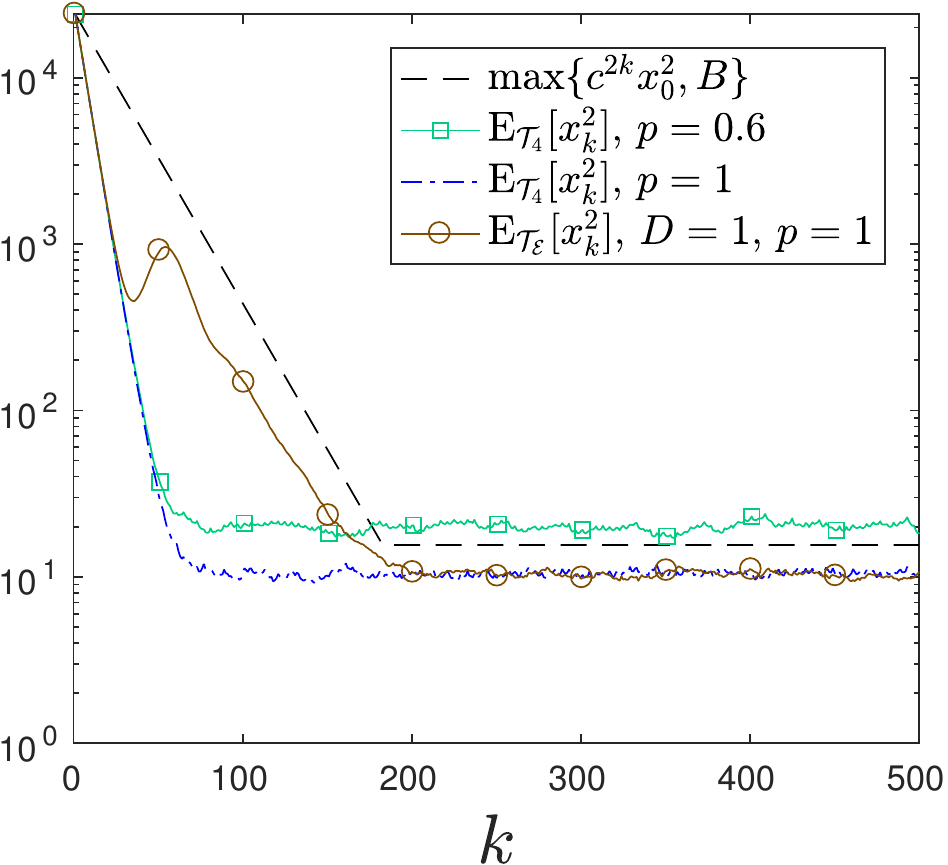}
  \caption{Plot of the evolution of the empirical mean
    $\expect[\mathcal{T}_4]{x_k^2}$ for the scalar example under the
    periodic transmission policy with period $T = 4$ with $p = 1$
    (deterministic channel) and $p = 0.6$ and the performance bound,
    $\max \{ c^{2k} x_0^2 , B \}$. Also shown is the empirical mean
    $\expect[\ETP]{x_k^2}$ under the event-triggered transmission
    policy~\eqref{eq:ET-design} with $D = 1$ and
    $p = 1$.}\label{fig:TTPD-sim}
\end{figure}
Finally, Figure~\ref{fig:tr-proc} shows sample transmission and
reception sequences for the event-triggered policy with $D = 1$ and
$p = 0.6$ and $p = 1$. We also show corresponding sequences for a
periodic policy with period $T = 4$ and $p = 0.6$. The plots show an
arbitrarily chosen interval of $50$ time steps for each transmission
and reception to be clearly distinguished.
\begin{figure}[htb!]
  \centering
  \subfigure[Event-triggered, $D = 1$, $p = 0.6$\label{fig:tr-ET06}]{\includegraphics[width=0.4\textwidth]{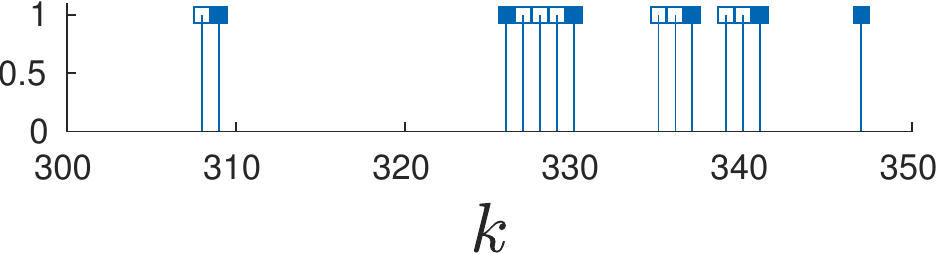}}\\
  \subfigure[Event-triggered, $D = 1$, $p = 1$\label{fig:tr-ET1}]{\includegraphics[width=0.4\textwidth]{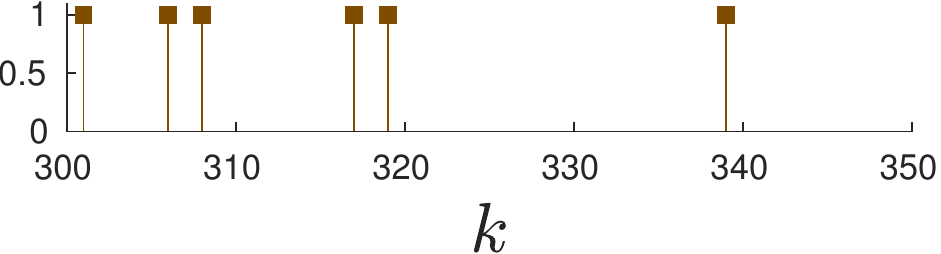}}\\
  \subfigure[Periodic, $T = 1$,
  $p =
  0.6$\label{fig:tr-TT06}]{\includegraphics[width=0.4\textwidth]{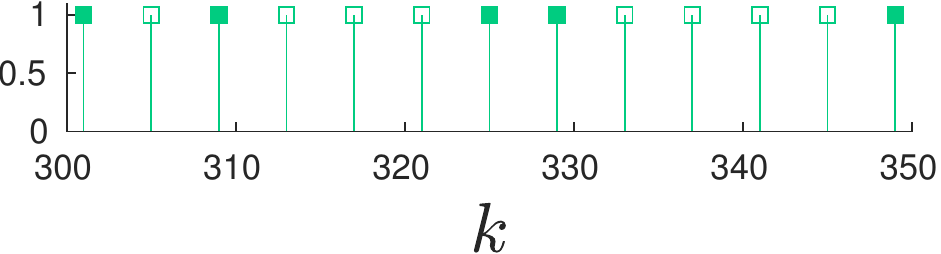}}
  \caption{Sample transmission and reception sequences for the
    event-triggered transmission policy~\eqref{eq:ET-design} with (a)
    $D = 1$ under $p = 0.6$ and (b) $D = 1$ under $p = 1$ and for (c)
    a periodic policy with period $T = 4$ and under $p = 0.6$. In each
    plot, an empty square corresponds to a ``transmission, but no
    reception'' and a filled square corresponds to a ``transmission
    and successful reception''.}\label{fig:tr-proc}
\end{figure}

\subsubsection*{Vector system}
We consider the dynamics~\eqref{eq:vec-sys} with the following system
matrices
\begin{align*}
  &A =
  \begin{bmatrix}
    0.8 & 0.5
    \\
    -0.5 & 1
  \end{bmatrix},
  \ Q =
  \begin{bmatrix}
    1 & 0
    \\
    0 & 1
  \end{bmatrix},
  \\
  &L =
  \begin{bmatrix}
    0.1310 & -0.5000
    \\
    0.5000 & -1.8820
  \end{bmatrix},
  \ \Sigma =
  \begin{bmatrix}
    0.1000 & 0.0500
    \\
    0.0500 & 0.1000
  \end{bmatrix} ,
\end{align*}
and the parameters $p = 0.8$ and $c = 0.98$. For this system, we get
$B^* = 2.44$ and chose $B = 2.93$. The initial condition is $x(0) =
B.[10 -5]^T$. We performed the same number of simulations as in the
scalar example to compute the empirical mean of the various relevant
quantities.  The results of simulations under the event-triggered
transmission policy~\eqref{eq:ET-vec} are illustrated in
Figures~\ref{fig:ETPD-sim-vec}
and~\ref{fig:ETPD-NT-vec}. Figure~\ref{fig:ETPD-sim-vec} shows that
the control objective is met, as stated in
Theorem~\ref{thm:main-vc}. The conservativeness that results from the
use of the upper bounds from Proposition~\ref{prop:vec-hbound} in the
definition of the event-trigger criterium is quite apparent from the
gap between the control objective and the actual trajectories of
$\expect[\ETPvec]{ \pf_{k} }$ compared to
Figure~\ref{fig:ETPD-sim}. Figure~\ref{fig:ETPD-NT-vec} also shows
that, as in the scalar case, smaller $D$ results in a less
conservative and more efficient design.
\begin{figure}[htb!]
  \centering
  \includegraphics[width=.875\linewidth]{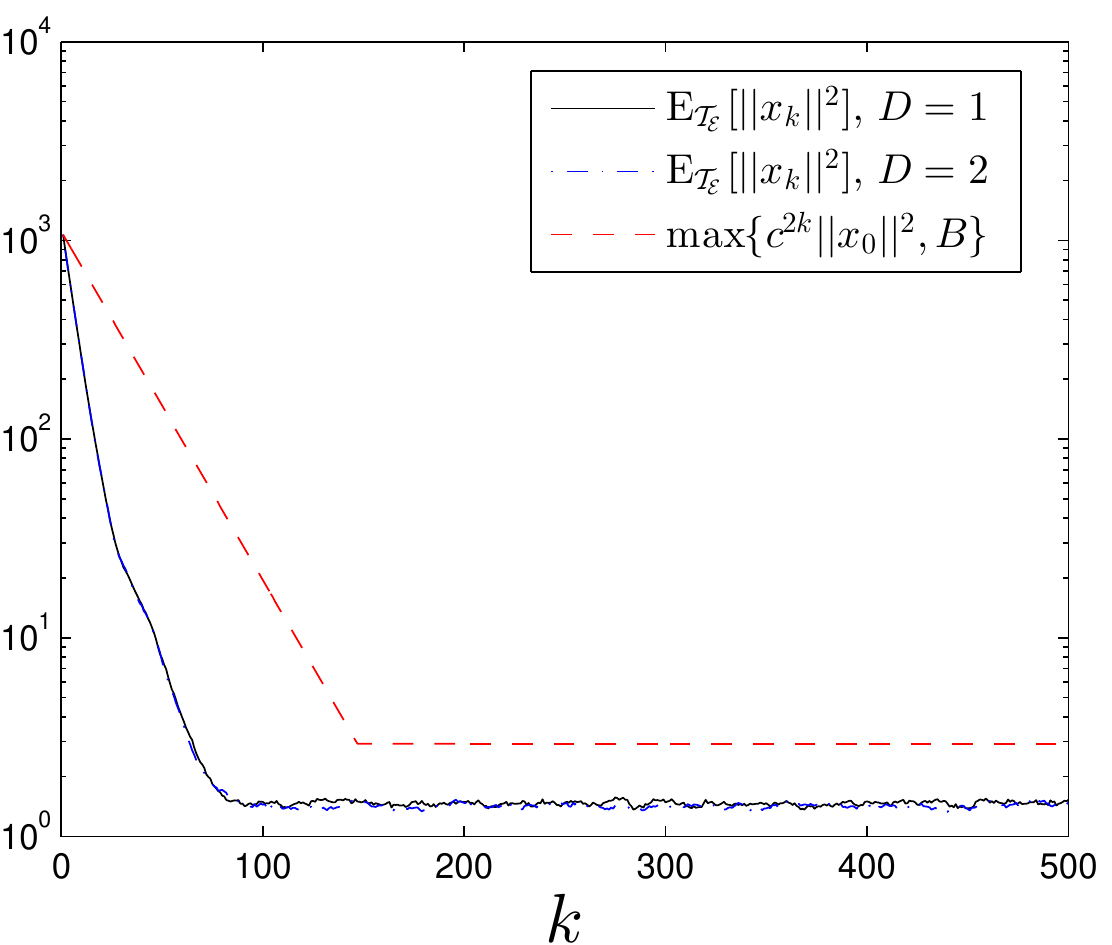}
  \caption{Plot of the evolution of the empirical mean
    $\expect[\ETP]{\norm{x_k}^2}$ for the vector example under the
    event-triggered transmission policy~\eqref{eq:ET-vec} with $D = 1$
    and $D = 2$ and the performance bound, $\max \{ c^{2k}
    \norm{x_0}^2 , B \}$.}\label{fig:ETPD-sim-vec}
\end{figure}
\begin{figure}[htb!]
  \centering
  \includegraphics[width=.875\linewidth]{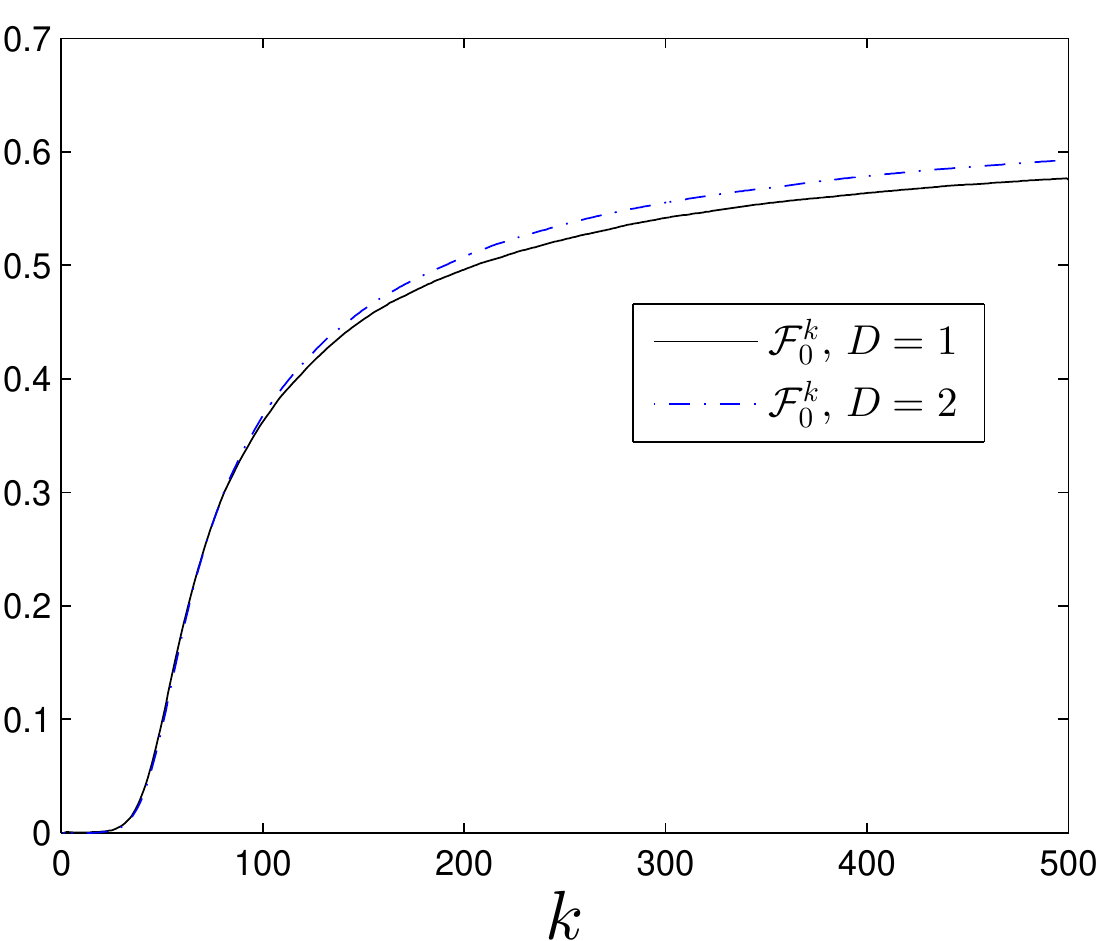}
  \caption{Plot of the evolution of the empirical running transmission
    fraction $\Fc_0^k$ for the vector example under the
    event-triggered transmission policy~\eqref{eq:ET-vec} for $D = 1$
    and $D = 2$.}\label{fig:ETPD-NT-vec}
\end{figure}

\section{Conclusions}\label{sec:conclusions}

We have designed an event-triggered transmission policy for scalar
linear systems under packet drops. The control objective consists of
achieving second-moment stability of the plant state with a given
exponential rate of convergence to an ultimate bound in finite
time. The synthesis of our policy is based on a two-step design
procedure. First, we consider a nominal quasi-time-triggered policy
where no transmission occurs for a given number of timesteps, and then
transmissions occur on every time step thereafter.  Second, we define
the event-trigger policy by evaluating the expectation of the system
performance at the next reception time given the current information
under the nominal policy, and prescribe a transmission if this
expectation does not meet the objective.  We have also characterized
the efficiency of our design by providing an upper bound on the
fraction of the expected number of transmissions over the infinite
time horizon. Finally, we have discussed the extension to the vector
case, and highlighted the challenges in characterizing the efficiency
of the event-triggered design. Future work will seek to address these
challenges in the vector case, incorporate measurement noise, output
measurements, lossy acknowledgments and will investigate the
possibilities for optimizing the two-step design of event-triggered
transmission policies, formally characterize the robustness advantages
of event-triggered versus time-triggered control, and investigate the
role of quantization and information-theoretic tools to address
questions about necessary and sufficient data rates.

\section*{Acknowledgments}
This work was supported in part by NSF Award CNS-1446891.

\bibliographystyle{IEEEtran}
\bibliography{alias,FB,JC,Main,Main-add}

\section*{Appendix: Glossary of symbols}

For the reader's reference, we present here a list of the symbols most
frequently used along the paper.

\begin{itemize}
\item State variables and functions
  \begin{itemize}
  \item $x_k$: plant state at time $k$
  \item $v_k$: process noise at time $k$
  \item $\xhat_k$: sensor's estimate of plant state at time $k$ given
    the `history' up to time $k-1$
  \item $\xhat_k^+$: controller's estimate of plant state at time $k$
    given `history' up to time $k$, including any reception at
    time~$k$
  \item $e_k \triangleq x_k - \xhat_k$: sensor estimation error
  \item $e_k^+ \triangleq x_k - \xhat_k^+$: controller estimation
    error
  \item $u_k \triangleq L\xhat_k^+$: control action at time $k$
  \item $I_k$: information available to the sensor at time $k$ before
    the decision to transmit or not
  \item $I_k^+$: information available to the controller at time $k$,
    which can also be computed by the sensor
  \item $h_k$: value of performance function at time $k$
  \end{itemize}

\medskip
\item System and performance parameters 
  \begin{itemize}
  \item $a$: open-loop `gain'
  \item $\Mbar \triangleq \frac{M}{a^2 -1}$: here $M$ is the
    covariance of $v_k$
  \item $\abar \triangleq a + L$: closed-loop `gain' in the case of
    perfect transmissions on all time steps
  \item $(1-p)$: probability of dropping a packet
  \item $B$: ultimate bound for second moment of plant state
  \item $c^2 \in (0,1)$: prescribed convergence rate for second moment
    of plant state
  \end{itemize}

\medskip
\item Transmission and reception process variables
  \begin{itemize}
  \item $t_k \in \{0,1\}$: no transmission/transmission at time $k$
  \item $r_k \in \{0,1\}$: no reception/reception at time $k$
  \item $R_k$: latest reception time before $k$
  \item $R_k^+$: latest reception time up to (including) $k$
  \item $S_j$: $j^{\text{th}}$ reception time
  \end{itemize}

\medskip
\item Symbols related to transmission policy
  \begin{itemize}
  \item $\Tc_k^D$: nominal transmission policy at time $k$ with
    parameter $D$
  \item $\G{k}{D}$: look-ahead criterion at time $k$ with parameter
    $D$
  \item $\ETP$: proposed event-triggered transmission policy
  \item $D$: `idle duration' in the nominal policy and `look-ahead
    horizon' in event-triggered policy
  \item $T_j$: first time a transmission occurs after $S_j$ under
    $\ETP$
  \item $\Gp{k}{D}$: performance-evaluation function at time $k$ with
    parameter $D$
  \item $H$: open-loop performance evolution function
  \item $\gD{b} \triangleq \frac{ b^D }{ 1 - b(1-p) }$
  \end{itemize}
\end{itemize}

\begin{IEEEbiography}[{\includegraphics[width=1in,
    height=1.25in,clip,keepaspectratio]
    {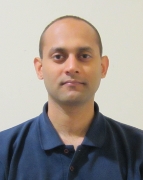}}]{Pavankumar Tallapragada}
  received the B.E. degree in Instrumentation Engineering from SGGS
  Institute of Engineering $\&$ Technology, Nanded, India in 2005,
  M.Sc. (Engg.) degree in Instrumentation from the Indian Institute of
  Science, Bangalore, India in 2007 and the Ph.D. degree in Mechanical
  Engineering from the University of Maryland, College Park in
  2013. He held a postdoctoral position at the University of
  California, San Diego during 2014 to 2017. He is currently an
  Assistant Professor in the Department of Electrical Engineering at
  the Indian Institute of Science, Bengaluru, India. His research
  interests include event-triggered control, networked control
  systems, distributed control and networked transportation systems.
\end{IEEEbiography}

\begin{IEEEbiography}
  [{\includegraphics[width=1in,height=1.25in,clip,keepaspectratio]{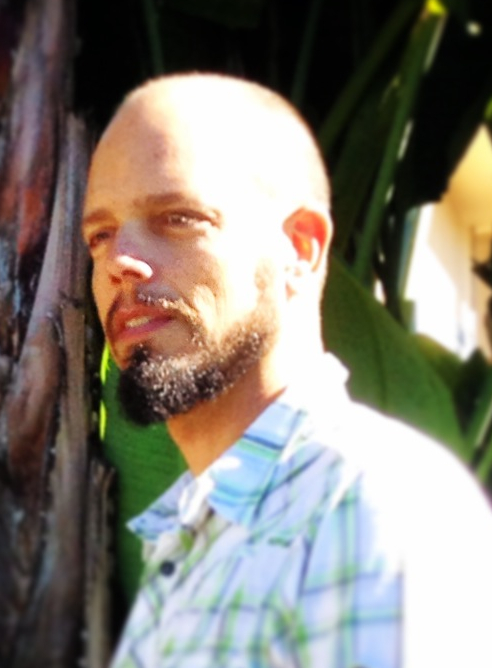}}]
  {Massimo Franceschetti} (M'98-SM'11) received the Laurea degree
  (with highest honors) in computer engineering from the University of
  Naples, Naples, Italy, in 1997, the M.S. and Ph.D. degrees in
  electrical engineering from the California Institute of Technology,
  Pasadena, CA, in 1999, and 2003, respectively.  He is Professor of
  Electrical and Computer Engineering at the University of California
  at San Diego (UCSD). Before joining UCSD, he was a postdoctoral
  scholar at the University of California at Berkeley for two
  years. He has held visiting positions at the Vrije Universiteit
  Amsterdam, the \'{E}cole Polytechnique F\'{e}d\'{e}rale de Lausanne,
  and the University of Trento. His research interests are in physical
  and information-based foundations of communication and control
  systems. He is co-author of the book ``Random Networks for
  Communication'' published by Cambridge University Press.
  Dr. Franceschetti served as Associate Editor for Communication
  Networks of the IEEE Transactions on Information Theory (2009 --
  2012), as associate editor of the IEEE Transactions on Control of
  Network Systems (2013-16) and as Guest Associate Editor of the IEEE
  Journal on Selected Areas in Communications (2008, 2009). He is
  currently serving as Associate Editor of the IEEE Transactions on
  Network Science and Engineering. He was awarded the C. H. Wilts
  Prize in 2003 for best doctoral thesis in electrical engineering at
  Caltech; the S.A.  Schelkunoff Award in 2005 for best paper in the
  IEEE Transactions on Antennas and Propagation, a National Science
  Foundation (NSF) CAREER award in 2006, an Office of Naval Research
  (ONR) Young Investigator Award in 2007, the IEEE Communications
  Society Best Tutorial Paper Award in 2010, and the IEEE Control
  theory society Ruberti young researcher award in 2012.
\end{IEEEbiography}

\begin{IEEEbiography}
  [{\includegraphics[width=1in,height=1.25in,clip,keepaspectratio]{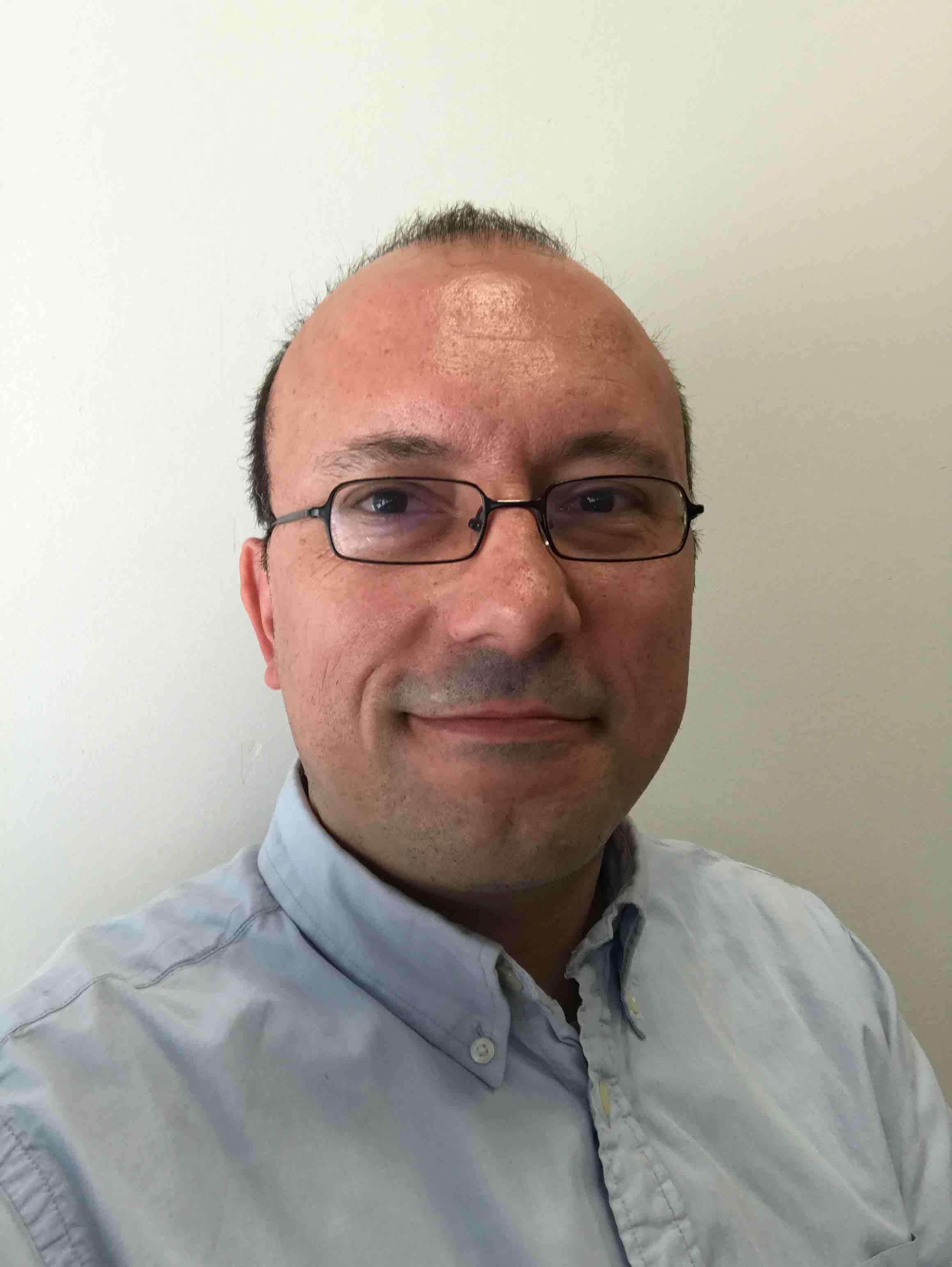}}]
  {Jorge Cort\'es} (M'02-SM'06-F'14) received the Licenciatura degree
  in mathematics from Universidad de Zaragoza, Zaragoza, Spain, in
  1997, and the Ph.D. degree in engineering mathematics from
  Universidad Carlos III de Madrid, Madrid, Spain, in 2001. He held
  postdoctoral positions with the University of Twente, Twente, The
  Netherlands, and the University of Illinois at Urbana-Champaign,
  Urbana, IL, USA. He was an Assistant Professor with the Department
  of Applied Mathematics and Statistics, University of California,
  Santa Cruz, CA, USA, from 2004 to 2007. He is currently a Professor
  in the Department of Mechanical and Aerospace Engineering,
  University of California, San Diego, CA, USA. He is the author of
  Geometric, Control and Numerical Aspects of Nonholonomic Systems
  (Springer-Verlag, 2002) and co-author (together with F. Bullo and
  S. Mart{\'\i}nez) of Distributed Control of Robotic Networks
  (Princeton University Press, 2009). He has been an IEEE Control
  Systems Society Distinguished Lecturer (2010-2014) and is an elected
  member for 2018-2020 of the Board of Governors of the IEEE Control
  Systems Society.  His current research interests include distributed
  control and optimization, network science, opportunistic
  state-triggered control and coordination, reasoning under
  uncertainty, and distributed decision making in power networks,
  robotics, and transportation.
\end{IEEEbiography}

\end{document}